\documentclass[11pt, a4paper]{article}
\usepackage[utf8]{inputenc}
\usepackage{mathtools}
\usepackage{amsmath}
\usepackage{amsthm}
\usepackage{algorithm}
\usepackage{algorithmic}
\usepackage{amssymb}
\usepackage{parskip}
\usepackage{epsfig}
\usepackage[colorlinks, citecolor=hanblue,linkcolor=mordantred19]{hyperref}

\usepackage{stackengine}
\mathtoolsset{showonlyrefs}
\usepackage{xcolor}
\usepackage[a4paper, margin=2.5cm]{geometry}
\usepackage{comment}
\usepackage{mathrsfs} 
\usepackage[export]{adjustbox}
\DeclareMathOperator*{\argmax}{arg\,max}
\DeclareMathOperator*{\argmin}{arg\,min}
\usepackage{multirow}

\newcommand{\mres}{\mathbin{\vrule height 1.4ex depth 0pt width
0.13ex\vrule height 0.13ex depth 0pt width 1.0ex}}

\usepackage{color}
\definecolor{hanblue}{rgb}{0.27, 0.42, 0.81}
\definecolor{mordantred19}{rgb}{0.68, 0.05, 0.0}
\definecolor{red}{rgb}{0.8, 0.0, 0.0}
\definecolor{green}{rgb}{0.0, 0.5, 0.0}

\DeclareMathOperator{\sign}{sign}
\renewcommand{\phi}{\varphi}

\DeclareMathOperator{\supp}{supp}
\DeclareMathOperator{\id}{id}

\usepackage{mathtools}
\usepackage{hyperref}
%\mathtoolsset{showonlyrefs}

%\usepackage[]{draftwatermark}
%\SetWatermarkText{${\mathcal{DRAFT}}$}
%\SetWatermarkScale{6.2}

\newcommand{\R}{\mathbb{R}}

\newcommand{\N}{\mathbb{N}}
\newcommand{\M}{\mathcal{M}}

\newcommand{\KR}{\operatorname{KR}}
\newcommand{\TV}{\operatorname{TV}}
\newcommand{\opt}{\operatorname{opt}}
\newcommand{\Ext}{\operatorname{Ext}}
\newcommand{\Lip}{\operatorname{Lip}}
\newcommand{\dd}{\, \mathrm{d}}
\newcommand{\W}{\mathcal{W}}
\newcommand{\ab}{{\alpha, \beta}}

\newcommand\restr[2]{{% we make the whole thing an ordinary symbol
  \left.\kern-\nulldelimiterspace % automatically resize the bar with \right
  #1 % the function
  \vphantom{\big|} % pretend it's a little taller at normal size
  \right|_{#2} % this is the delimiter
  }}

\usepackage{enumitem}

%\numberwithin{equation}{section}

\theoremstyle{plain}

\newtheorem{thm}{Theorem}[section]

\newtheorem{lemma}[thm]{Lemma}
\newtheorem{prop}[thm]{Proposition}

\newtheorem{ass}{Assumption}
\newtheorem{rem}[thm]{Remark}

\theoremstyle{definition}

\theoremstyle{remark}

\newtheorem{ex}[thm]{Example}

\definecolor{hanblue}{rgb}{0.27, 0.42, 0.81}

\usepackage{scalerel}

\title{Extremal points and sparse optimization for generalized Kantorovich-Rubinstein norms\footnotetext{2020 Mathematics Subject Classification: 49Q22, 46A55, 52A40, 65J22.}}
\author{Marcello Carioni$^\ast$, Jos\'e A. Iglesias\thanks{Department of Applied Mathematics, University of Twente, 7500AE Enschede, The Netherlands \newline (\texttt{m.c.carioni@utwente.nl}, \texttt{jose.iglesias{@}utwente.nl})} , Daniel Walter\thanks{Institut f\"ur Mathematik, Humboldt-Universit\"at zu Berlin, 10117 Berlin, Germany \newline (\texttt{daniel.walter@hu-berlin.de})}}
\date{}

\begin{document}

\maketitle

\begin{abstract}
A precise characterization of the extremal points of sublevel sets of nonsmooth penalties provides both detailed information about minimizers, and optimality conditions in general classes of minimization problems involving them. Moreover, it enables the application of accelerated generalized conditional gradient methods for their efficient solution. In this manuscript, this program is adapted to the minimization of a smooth convex fidelity term which is augmented with an unbalanced transport regularization term given in the form of a generalized Kantorovich-Rubinstein norm for Radon measures. More precisely, we show that the extremal points associated to the latter are given by all Dirac delta functionals supported in the spatial domain as well as certain dipoles, i.e., pairs of Diracs with the same mass but with different signs. Subsequently, this characterization is used to derive precise first-order optimality conditions as well as an efficient solution algorithm for which linear convergence is proved under natural assumptions. This behaviour is also reflected in numerical examples for a model problem.
\end{abstract}

\section{Introduction}
We consider minimization problems of the type
\begin{equation}\label{eq:introProb}\inf_{\mu \in \mathcal{M}(\Omega)} F(K\mu)+\|\mu\|_{\KR_p^\ab},\end{equation}
where $K : \mathcal{M}(\Omega)\rightarrow Y$ is a linear operator mapping the space $\M(\Omega)=(\mathcal{C}_0(\Omega))^\ast$ of signed finite Radon measures to a Hilbert space $Y$, $F : Y \rightarrow \R$ is a convex discrepancy and $\|\cdot\|_{\KR_p^\ab}$ is the generalized Kantorovich-Rubinstein norm defined as
\begin{equation}\label{eq:def-KRbp}
	    \|\mu\|_{\KR_p^\ab} := \inf_{\nu} W_p(\nu_+, \nu_-) + \frac{\beta}{2} |\nu|(\Omega) + \textbf{1}_{\nu(\Omega) = 0} + \alpha|\mu - \nu|(\Omega)
\end{equation}
for signed Radon measures $\mu,\nu \in \mathcal{M}(\Omega)$ on $\Omega$, a convex subset of $\R^n$. Here, $W_p$ is a Wasserstein distance (see \eqref{eq:defwass} below for the precise notation we use), so the $\KR_p^\ab$ norm reflects a form of unbalanced optimal transport with weights $\alpha >0$ for mass creation or destruction and $\beta \geq 0$ representing an additional penalization for the transported mass. Taking this into account, \eqref{eq:introProb} can be seen as a bilevel optimization problem with transport costs in the lower level.

It is immediate that definition \eqref{eq:def-KRbp} is only meaningful if $2\alpha - \beta > 0$, since otherwise the infimum would be attained at $\nu=0$ irrespective of $\mu$ and the norm would reduce to the total variation $\|\mu\|_{\TV} = |\mu|(\Omega)$. In this case %and for $\mu_r = 0$, 
\eqref{eq:introProb} becomes
\begin{equation}\label{eq:TVprob}\inf_{\mu \in \mathcal{M}(\Omega)} F(K\mu)+\alpha\|\mu\|_{\TV},\end{equation}
which has recently enjoyed great popularity as regularized inversion of the linear operator $K$, enforcing sparsity in an infinite dimensional setting \cite{candes2014towards, duval2015exact, bredies2013inverse}. Let us also point out that there have been previous works using the Kantorovich-Rubinstein norm $\|\cdot\|_{\KR_p^{\alpha, 0}}$ in inverse problems as a fidelity term, used alongside a higher order regularization term. In \cite{LelLorSchVal14} it was used along with total (gradient) variation regularization for image denoising and cartoon-texture decomposition while \cite{MetEtAl16} considers applications in geophysics, in both cases motivated by recovering oscillating signals to which the $\KR$ norms can assign low values through the transport term. 

We point out that the bilevel character of Problem~\eqref{eq:introProb} poses challenges, both, in its analysis as well as in its practical solution. However, motivated by recent results on nonsmooth, convex and one-homogeneous regularizers, these difficulties can be greatly alleviated once the extremal points of the sublevel sets of the Kantorovich-Rubinstein norm are available. It has been observed that an explicit description of such extremal points provides information on the structure of the sparse solutions of a regularized inverse problem when the observation is finite dimensional \cite{bredies2020sparsity, boyer2019representer, unser2017splines}. Moreover, it allows to devise accelerated generalized conditional gradient algorithms \cite{linconv}, i.e. infinite dimensional versions of the classical Frank-Wolfe algorithm \cite{dunn1978conditional, dunn1980convergence, frank1956algorithm, yu2017generalized} that are based on the iterative construction of linear combination of extremal points, converging to a solution of the minimization problem \cite{IglWal22, bredies2022generalized, duval2021dynamical, bredies2021extremal}.
These methods and algorithms are applicable to Problem~\eqref{eq:introProb} and they allow to formulate an optimization procedure that does not entail an inner minimization anymore. The main result of this paper, Theorem~\ref{thm:KRextremals}, gives a precise characterization of this set. Subsequently, we use these new-found extremal points to formulate simple first-order necessary and sufficient optimality conditions for~\eqref{eq:introProb} as well as to derive an efficient solution algorithm based on the accelerated generalized conditional gradient method presented in \cite{linconv}, see Algorithm~\ref{alg:accgcg} as well as Theorem~\ref{thm:sublinear} and~\ref{thm:linear}, respectively.

The minimization problem \eqref{eq:introProb} has the formal structure of regularized inversion of $K$. However, since the Kantorovich-Rubinstein norms can assign low values to self-similar oscillating signals, they are likely not advantageous by themselves as general-purpose regularization. Therefore, the potential applications we consider here are instead inspired by sparse optimal design incorporating awareness of a priori location information through a reference measure $\mu_r$, in which optimal transport can have a modelling motivation by itself. Moreover, since problems using the $\KR$ norm are computationally quite involved, the sparse optimization approach based on extremal points that we present here is likely to have applications beyond the particular minimization problems we use as examples.

\subsection{Preliminaries and notation}
Throughout, for nonnegative measures $\mu,\nu \in \M_+(\Omega) \subset \M(\Omega)$ we use the convention
\begin{equation}\label{eq:defwass}
	W_p(\mu, \nu) := \inf \left\{ \int_{\Omega \times \Omega} |x-y|^p \dd \gamma(x, y) \, \middle \vert\,  \gamma \in \Pi(\mu, \nu) \right\},
\end{equation}
where in particular we do not take the $1/p$-th power of the integrals. We focus on $p \in (0,1]$ and in this case we have that $(x,y)\mapsto |x-y|^p$ is a metric, by the subadditivity of increasing concave functions (see \cite[Lem.~2.1]{PegSanPia15}, for example). This also implies the ``metric'' triangle inequality $W_p(\mu, \nu) \leq W_p(\mu,\rho) + W_p(\rho,\nu)$, see \cite[Cor.~3.2]{San15}. In \eqref{eq:defwass}, $\Pi(\mu, \nu)$ denotes the usual set of couplings or transportation plans, that is, nonnegative measures $\gamma$ on $\Omega \times \Omega$ such that $(\pi_1)_\# \gamma = \mu$ and $(\pi_2)_\# \gamma = \nu$ for the pushforwards through the projections onto the first and second components, that is $\pi_1(x,y)=x$ for $x,y\in\Omega$, $(\pi_1)_\# \gamma(A) = \gamma(\pi_1^{-1}(A))$ for $A \subseteq \Omega$, and similarly for $\pi_2$.

The notation $\KR$ in \eqref{eq:def-KRbp} stands for Kantorovich and Rubinstein who introduced it for balanced measures, that is, those with $\mu(\Omega)=0$. A thorough treatment for that case can be found in \cite[Sec.~VIII.4]{KanAki82}. The case for unbalanced measures in the form \eqref{eq:def-KRbp} but with $\beta=0$ was introduced in \cite{Han92}.

A point $u$ belonging to a convex set $C$ is called \emph{extremal} if it cannot be written as a nontrivial convex combination of points in $C$, that is
\[u=\lambda v_1 + (1-\lambda)v_2 \text{ with } \lambda \in (0,1) \text{ and } v_1, v_2 \in C \text{ implies }u=v_1=v_2.\]
We denote the set of such points as $\Ext(C)$.

\subsection{Outline of the article}
In Section \ref{sec:extremals} we characterize the extremal points of the unit ball of the $\KR_p^\ab$ norm in $\mathcal{M}(\Omega)$. Section \ref{sec:minimizationproblem} is dedicated to Problem \eqref{eq:introProb}, and in it we discuss its first-order optimality conditions, introduce an accelerated generalized conditional gradient (AGCG) approach for its solution, and state results on sublinear and linear convergence. In Section \ref{sec:examples} we present numerical computations on a proof-of-concept instance of \eqref{eq:introProb} which demonstrate both the effect of the generalized $\KR$ norm, and the adequacy of the AGCG approach for its minimization. Appendix \ref{sec:appendix} contains the arguments needed to link Problem \eqref{eq:introProb} and the assumptions made in Section \ref{sec:minimizationproblem} to recent works on convergence of this type of method, which proves the stated convergence results.

\section{The \texorpdfstring{$\KR_p^\ab$}{KR} norm and its associated extremal points}\label{sec:extremals}
We have referred to $\|\cdot\|_{\KR_p^\ab}$ as a norm, but from our definitions it is not immediate that it is one. To check the triangle inequality for $\|\cdot\|_{\KR_p^\ab}$ it is enough to verify that 
\begin{equation}\label{eq:KRtriangle}
W_p\big((\nu+\mu)^+, (\nu+\mu)^-\big) \leq W_p(\nu^+, \nu^-) + W_p(\mu^+, \mu^-)
\end{equation}
for all balanced $\mu,\nu$. For the sake of completeness, we briefly prove it in the next lemma.
\begin{prop}\label{prop:triangle}
For all balanced measures $\mu,\nu$ the bound \eqref{eq:KRtriangle} holds true.
\end{prop}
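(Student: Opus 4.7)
The plan is to use the Kantorovich--Rubinstein dual characterisation of $W_p$. Since $d_p(x,y) := |x-y|^p$ is a genuine metric on $\Omega$ for $p \in (0,1]$ and $W_p$ (in our convention without a $1/p$-th root) coincides with the $1$-Wasserstein distance associated to $d_p$, the classical duality theorem gives
\[ W_p(\rho_1, \rho_2) = \sup \left\{ \int_\Omega f \, d(\rho_1 - \rho_2) \,\middle\vert\, |f(x) - f(y)| \leq |x-y|^p \ \text{for all}\ x, y \in \Omega \right\} \]
for any nonnegative $\rho_1, \rho_2 \in \M_+(\Omega)$ of equal total mass, so the value depends only on the signed difference $\rho_1 - \rho_2$.

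Applying this identity to $(\rho_1, \rho_2) = ((\mu+\nu)^+, (\mu+\nu)^-)$, whose difference is $\mu + \nu$, splitting the integral, and using balance of $\mu$ and $\nu$ to apply the duality once more to each summand separately, I obtain
\[ W_p\big((\mu+\nu)^+, (\mu+\nu)^-\big) = \sup_f \int_\Omega f \, d(\mu+\nu) \leq \sup_f \int_\Omega f \, d\mu + \sup_g \int_\Omega g \, d\nu = W_p(\mu^+, \mu^-) + W_p(\nu^+, \nu^-), \]
with the suprema taken over $d_p$-Lipschitz functions of constant $1$. This is exactly \eqref{eq:KRtriangle}.

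The only real obstacle is justifying the dual formula cleanly in this setting; on a convex subset of $\R^n$ it follows from the standard Kantorovich--Rubinstein theorem on Polish spaces, cf.\ \cite{San15}. An entirely primal alternative would sum optimal plans $\gamma_\mu + \gamma_\nu \in \Pi(\mu^+ + \nu^+, \mu^- + \nu^-)$ and then eliminate the overlap measures $\mu^- \wedge \nu^+$ and $\mu^+ \wedge \nu^-$ by a gluing-and-chaining construction, relying on the subadditivity $|x-z|^p \leq |x-y|^p + |y-z|^p$ for $p \in (0,1]$ to ensure the cost does not increase. This avoids invoking the dual formula but is noticeably more technical.
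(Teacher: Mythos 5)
Your argument is correct, and it rests on the same key ingredient as the paper's proof, namely the Kantorovich--Rubinstein duality formula \eqref{eq:Wpduality} for the metric cost $|x-y|^p$ with $p\in(0,1]$. The difference is one of execution: you stay entirely on the dual side, writing $W_p\big((\mu+\nu)^+,(\mu+\nu)^-\big)$ as a supremum of linear functionals of $\mu+\nu$ and using subadditivity of the supremum, whereas the paper uses duality only to conclude that $W_p(\rho_1,\rho_2)$ depends on the difference $\rho_1-\rho_2$ alone, and then returns to the primal side by observing that $\gamma_\mu+\gamma_\nu$ is an admissible plan for the pair $\big((\nu+\mu)^+ +\eta,\,(\nu+\mu)^-+\eta\big)$. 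Your route is marginally cleaner in that it never needs the existence of optimal plans $\gamma_\mu,\gamma_\nu$; the paper's primal step, on the other hand, generalizes to settings where only the primal formulation is available. Your sketched ``entirely primal alternative'' via gluing and the triangle inequality for $|x-y|^p$ would also work but is, as you note, unnecessary here.
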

\begin{proof}
Consider the duality formula for distance costs \cite[Sec.~3.1.1]{San15}, i,.e., 
\begin{equation}\label{eq:Wpduality}
	W_p(\rho_1, \rho_2) = \sup \left\{ \int_\Omega \psi(x) \dd(\rho_1-\rho_2)(x) \, 
 \middle\vert\, \psi \in \Lip_1(\Omega, p)\right\},
\end{equation}
where $\Lip_1(\Omega, p)$ stands for $1$-Lipschitz functions with respect to the metric $(x,y)\mapsto |x-y|^p$. This tells us, in particular, that $W_p(\rho_1, \rho_2)$ depends only on the difference $\rho_1-\rho_2$. Therefore, if for balanced measures $\mu, \nu$ we have 
\begin{equation}
	\nu^+ + \mu^+ = (\nu+\mu)^+ + \eta \ \text{ and } \nu^- + \mu^- = (\nu+\mu)^- + \eta,
\end{equation}
denoting by $\gamma_\mu$ and $\gamma_\nu$ optimal plans for $W_p(\nu^+, \nu^-)$ and $W_p(\mu^+, \mu^-)$, we obtain
\begin{align*}
	W_p\big((\nu+\mu)^+, (\nu+\mu)^-\big) &= W_p\big((\nu+\mu)^+ \!+\eta, \,(\nu+\mu)^- \!+\eta\big) \\
	&\leq \int_{\Omega \times \Omega} |x-y|^p \dd(\gamma_\nu+\gamma_\mu)(x,y) \\
	&= W_p(\nu^+, \nu^-) + W_p(\mu^+, \mu^-).\qedhere
\end{align*}
\end{proof}

% To check the triangle inequality for it, we need that for any balanced measures $\mu,\nu$ the bound
% \begin{equation}\label{eq:KRtriangle}
% 	W_p\big((\nu+\mu)^+, (\nu+\mu)^-\big) \leq W_p(\nu^+, \nu^-) + W_p(\mu^+, \mu^-),
% \end{equation}
% but it could be that $(\nu+\mu)^+ \neq \nu^+ + \mu^+$ and $(\nu+\mu)^- \neq \nu^- + \mu^-$ because of cancellations in the sum. At this point, we can invoke the duality formula for distance costs \cite[Sec.~3.1.1]{San15} which tells us that
% \begin{equation}\label{eq:Wpduality}
% 	W_p(\rho_1, \rho_2) = \sup \left\{ \int_\Omega \psi(x) \dd(\rho_1-\rho_2)(x) \, \middle\vert\, \psi \in \Lip_1(\Omega, p)\right\},
% \end{equation}
% where $\Lip_1(\Omega, p)$ stands for $1$-Lipschitz functions with respect to the metric $(x,y)\mapsto |x-y|^p$. This tells us, in particular, that $W_p(\rho_1, \rho_2)$ depends only on the difference $\rho_1-\rho_2$. Therefore, if for the $\mu, \nu$ of \eqref{eq:KRtriangle} we have 
% \begin{equation}
% 	\nu^+ + \mu^+ = (\nu+\mu)^+ + \eta \ \text{ and } \nu^- + \mu^- = (\nu+\mu)^- + \eta,
% \end{equation}
% and we can use any optimal plans $\gamma_\mu$ and $\gamma_\nu$ for $W_p(\nu^+, \nu^-)$ and $W_p(\mu^+, \mu^-)$ to obtain
% \begin{equation}\begin{aligned}
% 	W_p\big((\nu+\mu)^+, (\nu+\mu)^-\big) &= W_p\big((\nu+\mu)^+ \!+\eta, \,(\nu+\mu)^- \!+\eta\big) \\
% 	&\leq \int_{\Omega \times \Omega} |x-y|^p \dd(\gamma_\nu+\gamma_\mu)(x,y) \\
% 	&= W_p(\nu^+, \nu^-) + W_p(\mu^+, \mu^-),
% \end{aligned}\end{equation}
% as desired. 

We note that these considerations are implicit in \cite{KanAki82}, \cite{Han92} and some other works treating $\KR$ norms, since their definition involves the set of measures $\gamma$ on $\Omega \times \Omega$ from which $\nu$ can be recovered as $\nu(A) = (\pi_1)_\# \gamma(A) - (\pi_2)_\# \gamma(A)$ for $A \subseteq \Omega$, which is larger than the set of couplings $\Pi(\nu^+, \nu^-)$. Starting with that definition the triangle inequality follows directly, but then the duality formula \eqref{eq:Wpduality} is needed to see that the corresponding infimum is in fact attained on $\Pi(\nu^+, \nu^-)$. We conclude this section showing that the infimal convolution in \eqref{eq:def-KRbp} is exact. For the sake of generality we decide to prove this statement for $\Omega \subset \R^n$ convex but not necessarily compact, which requires a slightly more technical proof to construct tight minimizing sequences.

\begin{lemma}\label{lem:krexact}
If $\beta>0$, the infimal convolution in \eqref{eq:def-KRbp} is always exact, that is, the infimum is attained for any $\mu$.
\end{lemma}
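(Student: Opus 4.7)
The plan is to apply the direct method of the calculus of variations to the functional
\[
J(\nu) := W_p(\nu_+, \nu_-) + \frac{\beta}{2}|\nu|(\Omega) + \alpha\,|\mu - \nu|(\Omega)
\]
on the affine set $\{\nu \in \M(\Omega) : \nu(\Omega) = 0\}$. Given a minimizing sequence $\{\nu_k\}$, the hypothesis $\beta>0$ gives the uniform bound $|\nu_k|(\Omega) \leq 2J(\nu_k)/\beta$, so by Banach--Alaoglu a subsequence converges weak-$*$ in $\M(\Omega) = (C_0(\Omega))^\ast$.

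When $\Omega$ is bounded (hence compact by closedness and convexity), the constant function $1$ lies in $C(\Omega)$ so the balance condition $\nu_k(\Omega)=0$ passes to the weak-$*$ limit, and each summand of $J$ is weak-$*$ lower semicontinuous: total variation is standard, and \eqref{eq:Wpduality} writes $W_p(\nu_+,\nu_-)$ as a supremum of integrals against $\Lip_1(\Omega,p)$ functions, which modulo an additive constant are uniformly bounded on compact $\Omega$ and therefore lie in $C(\Omega)$. This immediately finishes the proof.

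For noncompact $\Omega$, the obstruction is that mass of $\nu_k$ may escape to infinity, so the weak-$*$ limit need not be balanced. I would remedy this by constructing a tight minimizing sequence. Fix $x_0 \in \Omega$; for $R>0$ let $C_R := \bar B_R(x_0)\cap\Omega$, and let $T_R \colon \Omega \to C_R$ denote the nearest-point projection onto this closed convex set, which is well-defined and $1$-Lipschitz. Setting $\tilde\nu_k^R := (T_R)_\#\nu_k$, three estimates need to be established. First, $\tilde\nu_k^R$ remains balanced and satisfies $|\tilde\nu_k^R|(\Omega) \leq |\nu_k|(\Omega)$. Second, pushing an optimal plan for $((\nu_k)_+,(\nu_k)_-)$ through $T_R\times T_R$ yields a coupling of the pushforward marginals $(T_R)_\#(\nu_k)_\pm$ of no greater cost (since $|T_R(x)-T_R(y)|^p\leq|x-y|^p$), and \eqref{eq:Wpduality}, showing that $W_p$ depends only on the signed difference of its two nonnegative arguments, gives $W_p((\tilde\nu_k^R)_+,(\tilde\nu_k^R)_-) = W_p((T_R)_\#(\nu_k)_+,(T_R)_\#(\nu_k)_-) \leq W_p((\nu_k)_+,(\nu_k)_-)$. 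Third, splitting $|\mu-\tilde\nu_k^R|$ over $C_R$ and $\Omega\setminus C_R$, using that $\tilde\nu_k^R$ vanishes outside $C_R$, and applying the reverse triangle inequality to $|\mu - \nu_k|$ on the complement leads to $|\mu - \tilde\nu_k^R|(\Omega) \leq |\mu - \nu_k|(\Omega) + 2|\mu|(\Omega\setminus B_R)$. Combined:
\[
J(\tilde\nu_k^R) \leq J(\nu_k) + 2\alpha\,|\mu|(\Omega\setminus B_R),
\]
and the correction vanishes as $R\to\infty$ by tightness of the finite Radon measure $\mu$.

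Then for each $R$, applying the compact case to $\{\tilde\nu_k^R\}_k\subset\M(C_R)$ yields a balanced limit $\nu^R$ with $J(\nu^R) \leq \inf J + 2\alpha|\mu|(\Omega\setminus B_R)$. Applying the same truncation estimate to $\nu^R$ itself at smaller scales $R'<R$ provides uniform tightness of the family $\{\nu^R\}_R$, after which a diagonal weak-$*$ extraction yields a balanced limit $\nu^*$ with $J(\nu^*)\leq\inf J$, which is therefore a minimizer. The main obstacle is precisely this last tightness step: the three summands of $J$ do not on their own force the minimizing sequence to be tight, and only the combination of the projection-truncation estimate with the tightness of $\mu$ prevents mass from escaping to infinity and destroying the balance constraint in the weak-$*$ limit.
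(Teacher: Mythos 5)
Your overall strategy coincides with the paper's: the direct method in the weak\(^*\) topology of \(\M(\Omega)\), combined with a \(1\)-Lipschitz projection of the minimizing sequence onto growing compact convex subsets of \(\Omega\), and the key estimate \(|\mu-\tilde\nu|(\Omega)\leq|\mu-\nu|(\Omega)+2|\mu|(\Omega\setminus C)\) coming from the inner regularity (tightness) of \(\mu\). The compact case and your three truncation estimates are essentially the paper's, modulo one detail you gloss over: the projection \(T_R\) deposits the mass of \(\nu_k\) lying outside \(C_R\) onto \(\partial C_R\), so when you split the fidelity term you must do so over a strictly smaller set than \(C_R\) (the paper uses nested sets \(A_{m}\subset\operatorname{int}A_n\) for exactly this reason); with \(C_{R-1}\) in place of \(C_R\) in the splitting your inequality goes through.

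Where your argument genuinely departs from the paper's, and where it is incomplete, is the final passage to the limit. You first send \(k\to\infty\) at fixed \(R\), obtaining balanced near-minimizers \(\nu^R\) supported in \(C_R\), and then need a second limit \(R\to\infty\); for this you assert that re-applying the truncation estimate at scales \(R'<R\) ``provides uniform tightness of the family \(\{\nu^R\}_R\)''. It does not: that estimate compares the \emph{values} \(J((T_{R'})_\#\nu^R)\) and \(J(\nu^R)\), and gives no control on the tail mass \(|\nu^R|(\Omega\setminus B_{R'})\) --- a near-minimizer and its truncation can have comparable energies while \(\nu^R\) still carries order-one mass near \(\partial C_R\). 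So the second extraction is unjustified as written, and you have reintroduced for \(\{\nu^R\}_R\) precisely the escape-of-mass problem you set out to eliminate for \(\{\nu_k\}_k\). The paper avoids the double limit by diagonalizing from the outset: it couples the truncation set to the index, choosing compact convex \(A_n\) with \(|\mu|(\Omega\setminus A_n)\leq 1/n\) and working with the single modified minimizing sequence \(\tilde\nu_n=\pi^n_\#\nu_n\), so that only one weak\(^*\) extraction is needed and the balance and tightness of that one sequence are addressed directly. Your proof is repaired by the same device --- replace the family \(\{\nu^R\}_R\) by the diagonal sequence \(\tilde\nu_k^{R_k}\) with \(|\mu|(\Omega\setminus B_{R_k})\to 0\) --- but the uniform-tightness claim, as you state and justify it, is a real gap.
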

\begin{proof}
We can use the direct method of the calculus of variations, using the weak* topology of $\M(\Omega)$. Consider a minimizing sequence $\nu_n$ for \eqref{eq:def-KRbp}. Note that $\nu_n$ is uniformly bounded in total variation since $\beta>0$, so up to a subsequence we can assume it weak* converges to some limit measure. However, a priori this limit is not necessarily balanced. For this reason, we construct a modified sequence $\tilde \nu_n$ that is uniformly tight (which enforces the limit to be balanced, as can be seen testing the convergence with $\mathcal{C}_0$ extensions of constant functions), has at most the same total variation as $\nu_n$, and is still minimizing. 

First note that, by inner regularity of $\mu$, for every $n >0$ there exists a compact set $A_n \subset \Omega$ such that $|\mu|(\Omega \setminus A_n) \leq 1/n$. 
Note that since $\Omega$ is convex we can assume that also $A_n$ is convex for every $n$ by taking its convex envelope. Moreover, we can also assume that the sets $A_n$ are ordered, and
$A_m  \subset \operatorname{int} A_n$ whenever $m < n$.
Denote by $\pi^n : \Omega \rightarrow A_n$ the projection onto $A_n$ and note that $\pi^n(\Omega \setminus A_n) \cap A_m = \emptyset$ for every $n > m$ since $\pi^n(\Omega \setminus A_n) \subset \partial A_n$. Define then the sequence of measures
\begin{equation}
    \tilde \nu_n := \pi^{n}_\#\nu_n.
\end{equation}
Note that $\tilde \nu_n$ is compactly supported in $A_{n}$ and it is thus tight.
Moreover $\tilde \nu_n$ is still balanced, its total variation is bounded above by that of $\nu_n$ (the projections may induce cancellations, since $(\pi^{n})^{-1}(x)$ is a half-line when $x\in \partial A_n$) and $W_p(\tilde \nu^+_n , \tilde \nu^-_n) \leq W_p(\nu^+_n , \nu^-_n)$ since $\pi^{n}$ is $1$-Lipschitz. We now note that the fidelity term can be estimated as
\begin{equation}\begin{aligned}
    |\tilde \nu_n - \mu|(\Omega) & = |\tilde \nu_n - \mu|(A_{n-1}) + |\tilde \nu_n - \mu|(\Omega \setminus A_{n-1})  \\
    & \leq |\nu_n - \mu|(A_{n-1}) + |\tilde \nu_n|(\Omega \setminus A_{n-1}) + |\mu|(\Omega \setminus A_{n-1}) \\
    & \leq |\nu_n - \mu|(A_{n-1}) + |\nu_n|(\Omega \setminus A_{n-1}) + |\mu|(\Omega \setminus A_{n-1}) \\
    & \leq |\nu_n - \mu|(A_{n-1}) + |\nu_n - \mu|(\Omega \setminus A_{n-1}) + 2|\mu|(\Omega \setminus A_{n-1})\\
    & \leq |\nu_n - \mu|(\Omega) + \frac{2}{n-1}
\end{aligned}\end{equation}
since $|\mu|(\Omega \setminus A_{n-1}) \leq 1/(n-1)$ and $\pi^n(\Omega \setminus A_{n}) \cap A_{n-1} = \emptyset$.
This shows that $\tilde \nu_n$ is a minimizing sequence as well.
Since the total variation of a measure is weak* lower semicontinuous and the set of balanced measures is weak* closed it remains to show that $\nu \mapsto W_p(\nu^+,\nu^-)$ is weak* lower semicontinuous. This property follows again by the duality formula for distance costs \eqref{eq:Wpduality}, in which $W_p(\nu^+, \nu^-)$ is expressed as a supremum of weak* continuous functionals.
\end{proof}

\begin{lemma}\label{lem:lscKR}
The map $\mu \mapsto \|\mu\|_{\KR^\ab_p}$ is weak* lower semicontinuous.
\end{lemma}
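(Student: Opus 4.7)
The plan is to apply the direct method of the calculus of variations to near-minimizers of the infimal convolution~\eqref{eq:def-KRbp}. Let $\mu_n \weakstar \mu$ in $\M(\Omega)$ and pass to a subsequence along which $L := \liminf_n \|\mu_n\|_{\KR^\ab_p}$ is attained as a finite limit (if $L = \infty$ there is nothing to show). Banach-Steinhaus provides uniform boundedness of $|\mu_n|(\Omega)$. For each $n$, I pick a balanced $\nu_n$ that is optimal when $\beta > 0$ (via Lemma~\ref{lem:krexact}) and $1/n$-suboptimal otherwise; near-optimality gives $\alpha|\mu_n - \nu_n|(\Omega) \leq L + 1$, and combined with either $\tfrac{\beta}{2}|\nu_n|(\Omega) \leq L + 1$ (when $\beta > 0$) or with the triangle inequality (when $\beta = 0$) I obtain a uniform total variation bound on $\nu_n$.

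Applying Banach-Alaoglu I extract a subsequence with $\nu_n \weakstar \nu$, so also $\mu_n - \nu_n \weakstar \mu - \nu$. Weak$^*$ lower semicontinuity of the total variation norm immediately yields
\[
|\nu|(\Omega) \leq \liminf_n |\nu_n|(\Omega), \quad |\mu - \nu|(\Omega) \leq \liminf_n |\mu_n - \nu_n|(\Omega).
\]
The main obstacle is to simultaneously pass to the limit in the Wasserstein term $W_p(\nu_n^+, \nu_n^-)$ and preserve the balancedness constraint $\nu(\Omega) = 0$. When $\Omega$ is bounded both follow immediately from the duality formula~\eqref{eq:Wpduality}, since $\Lip_1(\Omega, p)$-functions and the constant $1$ all lie in $C_0(\Omega) = C(\Omega)$ and therefore pair weak$^*$-continuously with measures. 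For unbounded $\Omega$ mass in $\nu_n$ can in principle escape to infinity, potentially destroying the balancedness of the limit.

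To handle the unbounded case I propose to rerun the tightening projection from the proof of Lemma~\ref{lem:krexact}: replace $\nu_n$ by $\tilde \nu_n := \pi^n_\# \nu_n$ onto an exhausting increasing sequence of compact convex sets $A_n$ with $A_{n-1} \subset \operatorname{int} A_n$, chosen from the inner regularity of $\mu$ and of each $|\mu_m|$ (using an index shift so that $A_{n-1}$ captures most of $\mu_n$'s mass) such that both $|\mu|(\Omega \setminus A_n)$ and $|\mu_n|(\Omega \setminus A_{n-1})$ are at most $1/n$. The resulting $\tilde \nu_n$ is balanced, with $W_p$ and TV no larger than those of $\nu_n$, and the estimate from Lemma~\ref{lem:krexact} adapts to give $|\mu_n - \tilde \nu_n|(\Omega) \leq |\mu_n - \nu_n|(\Omega) + 2/n$. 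Once tightened, the weak$^*$ limit $\tilde \nu$ of $\tilde \nu_n$ is balanced, and $W_p(\tilde \nu^+, \tilde \nu^-) \leq \liminf_n W_p(\tilde \nu_n^+, \tilde \nu_n^-)$ follows by testing the duality formula~\eqref{eq:Wpduality} against $\psi \in \Lip_1(\Omega, p) \cap C_c(\Omega)$. Combining every bound shows that $\tilde \nu$ is an admissible competitor in~\eqref{eq:def-KRbp} of value at most $L$, establishing $\|\mu\|_{\KR^\ab_p} \leq L$.
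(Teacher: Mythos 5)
Your argument is correct in substance but takes a genuinely different route from the paper's. The paper disposes of this lemma in two lines: it views $\|\cdot\|_{\KR^\ab_p}$ as the infimal convolution of $\W^\beta_p$ (proper, bounded below, weak* lower semicontinuous by Lemma~\ref{lem:krexact}) with the weak* lower semicontinuous and weak* coercive functional $\alpha|\cdot|(\Omega)$, and then invokes an abstract result on lower semicontinuity of such infimal convolutions. You unpack that abstract result by hand: extract near-optimal $\nu_n$, obtain compactness from the coercive term (or, when $\beta=0$, from $|\nu_n|(\Omega)\leq|\mu_n|(\Omega)+|\mu_n-\nu_n|(\Omega)$ together with the uniform bound on $|\mu_n|(\Omega)$ --- a case the paper's chain of citations does not really cover, since Lemma~\ref{lem:krexact} assumes $\beta>0$), and pass to the limit term by term. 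The paper's route buys brevity and quarantines all the measure-theoretic work inside Lemma~\ref{lem:krexact}; yours buys self-containedness and the extra case $\beta=0$. Two caveats. First, the relevant dichotomy is compact versus non-compact $\Omega$ rather than bounded versus unbounded: for a bounded, non-closed convex $\Omega$ the constant $1$ is not in $\mathcal{C}_0(\Omega)$ and mass can escape to $\partial\Omega$, so the projection argument is needed there too. Second, the phrase ``once tightened, the weak* limit of $\tilde\nu_n$ is balanced'' is not justified by the construction alone: each $\tilde\nu_n=\pi^n_\#\nu_n$ is supported in the compact set $A_n$, but the $A_n$ exhaust $\Omega$, so the sequence need not be uniformly tight and balancedness of the limit does not yet follow from testing against extensions of constants. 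You inherit this rather than introduce it --- the paper's own Lemma~\ref{lem:krexact} makes the identical move (``compactly supported in $A_n$ and thus tight'') --- but if you want to close it, note that the uniform bound on $W_p(\tilde\nu_n^+,\tilde\nu_n^-)$ forces any positive mass escaping to infinity to be matched, up to a cost of order $R^{-p}$, by an equal amount of escaping negative mass, so the limit remains balanced.
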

\begin{proof}
We have defined $\|\cdot\|_{\KR^\ab_p}$ as the infimal convolution of $\W^\beta_p(\nu):=W_p(\nu^+, \nu^-) + \frac{\beta}{2}|\nu|(\Omega) + \textbf{1}_{\nu(\Omega) = 0}$ which is proper and bounded from below, and $|\cdot|(\Omega)$ which is proper, weak* lower semicontinuous and weak* coercive. In this situation it is enough (see \cite[Thm.~2.5(b)]{stromberg1996operation} or \cite[Prop.~6.5.5]{Lau72}) to notice that $\nu \mapsto \W^\beta_p(\nu)$ is weak* lower semicontinuous as well, which was proved in Lemma \ref{lem:krexact}. 
\end{proof}

\begin{lemma}\label{lem:compKR}
If $\alpha, \beta >0$, the set $\{ \mu \,|\, \|\mu\|_{\KR^\ab_p} \leq 1\}$ is the closed convex hull of the set of its extremal points, which is in particular nonempty.
\end{lemma}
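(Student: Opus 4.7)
The plan is to apply the Krein--Milman theorem in the locally convex Hausdorff space $\M(\Omega) = (\mathcal{C}_0(\Omega))^\ast$ equipped with its weak$^\ast$ topology. Since the Krein--Milman theorem guarantees that every nonempty, convex, compact subset of such a space coincides with the closed convex hull of its extreme points (and in particular has at least one), it suffices to verify these three properties for the sublevel set $B := \{ \mu \,|\, \|\mu\|_{\KR^\ab_p} \leq 1\}$.

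Nonemptiness and convexity are essentially immediate: $0 \in B$, and convexity follows from the fact that $\|\cdot\|_{\KR^\ab_p}$ is a (semi)norm as established via Proposition~\ref{prop:triangle}, combined with its positive $1$-homogeneity, which is visible directly from the definition \eqref{eq:def-KRbp}. Weak$^\ast$ closedness of $B$ is exactly Lemma~\ref{lem:lscKR}, since a lower semicontinuous function has closed sublevel sets.

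The main step is therefore weak$^\ast$ compactness. I would obtain it by combining closedness with a uniform total variation bound on $B$ and Banach--Alaoglu. To get the total variation bound, fix $\mu \in B$ and invoke Lemma~\ref{lem:krexact} (which requires $\beta > 0$) to produce an actual minimizer $\nu$ of the infimal convolution defining $\|\mu\|_{\KR^\ab_p}$. Then
\begin{equation}
\tfrac{\beta}{2}|\nu|(\Omega) + \alpha|\mu - \nu|(\Omega) \leq \|\mu\|_{\KR^\ab_p} \leq 1,
\end{equation}
from which the triangle inequality for the total variation yields
\begin{equation}
|\mu|(\Omega) \leq |\nu|(\Omega) + |\mu - \nu|(\Omega) \leq \tfrac{2}{\beta} + \tfrac{1}{\alpha}.
\end{equation}
Hence $B$ is contained in a closed ball of $\M(\Omega)$, which is weak$^\ast$ compact by Banach--Alaoglu, and being weak$^\ast$ closed within it, $B$ is itself weak$^\ast$ compact.

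With these three properties in hand, the Krein--Milman theorem applied in $(\M(\Omega),\text{weak}^\ast)$ yields both that $\Ext(B)$ is nonempty and that $B = \overline{\operatorname{co}}^{\,w^\ast}(\Ext(B))$, concluding the proof. The only place where any real work is needed is the total variation bound, and the only subtle point there is justifying the existence of the minimizer $\nu$, which is precisely the content of the previous lemma and the reason the hypothesis $\beta > 0$ is included.
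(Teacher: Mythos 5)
Your proof is correct and follows essentially the same route as the paper: a uniform total variation bound on the sublevel set plus Banach--Alaoglu gives weak$^\ast$ precompactness, Lemma~\ref{lem:lscKR} gives closedness, and Krein--Milman concludes. The only cosmetic difference is that you obtain the total variation bound by invoking the exact minimizer from Lemma~\ref{lem:krexact}, whereas the paper gets it in one line from the coercivity estimate $\|\mu\|_{\KR^\ab_p} \geq \min(\alpha,\beta/2)\,|\mu|(\Omega)$ (which needs no exactness, only that every competitor $\nu$ satisfies $\tfrac{\beta}{2}|\nu|(\Omega)+\alpha|\mu-\nu|(\Omega)\geq\min(\alpha,\beta/2)\,|\mu|(\Omega)$).
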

\begin{proof}
By the straightforward estimate
\begin{equation}\label{eq:coercivity}\|\mu\|_{\KR^\ab_p} \geq \min \left(\alpha, \frac{\beta}{2}\right)\!|\mu|(\Omega) \quad\text{for all }\mu \in \M(\Omega)\end{equation}
and applying the Banach-Alaoglu theorem, the set in consideration is precompact with respect to the weak* topology of $\M(\Omega)$. Noting that by Lemma \ref{lem:lscKR} this set is also weak* closed and hence compact, we can apply the Krein-Milman theorem to obtain the claim.
\end{proof}

\subsection{Extremal points}
For the remainder of this section, our aim is to characterize the extremal points of the set $\{\mu \,\vert\, \|\mu\|_{\KR^{\alpha, \beta}_p} \leq 1\}$. In particular, we will see that they contain rescaled \emph{dipoles}, defined as
\begin{align}
 \mathcal{D}_\beta(x,y) := \frac{1}{\beta + |x-y|^p}(\delta_x - \delta_y)\ \text{ for }\ (x,y) \in \Omega \times \Omega\ \text{ with }\ x \neq y,
\end{align}
where we drop the dependence on $p$ for notational convenience.
These represent an elementary transport operation and arise from the $W_p$ term in the definition of $\|\cdot\|_{\KR_p^\ab}$. These same objects are known \cite[Ch.~3, Cor.~3.45]{Wea18} to be extremals for the unit ball of preduals of pointed Lipschitz spaces, corresponding to the simultaneous restriction to balanced measures for which $\mu(\Omega)=0$, and $\beta=0$. Moreover, in \cite[Thm.~6.1]{AngAscDonMan20} and \cite[Thm.~4.2]{AngAscMan21} atomic decompositions have been recently proved for Borel measures with respect to the $\|\cdot\|_{\KR_p^{1,0}}$ norm, obtaining that these can be expressed as a (countable) series of Dirac masses and dipoles. 

Here, we treat the unbalanced case with $\beta >0$, which makes the norm coercive with respect to the total variation. Interestingly, this total variation penalization does not alter the structure of the extremal points compared to the case $\beta=0$, but just their normalization. Our proofs are self-contained and use only well-known facts about the Kantorovich formulation of optimal transport, which makes the methods quite different from the works cited above.

The characterization we obtain turns out to be the following:
\begin{thm}\label{thm:KRextremals}
The set $\Ext\big\{ \mu\, \big \vert\, \|\mu\|_{\KR^\ab_p} \leq 1 \big\}$ of extremal points is precisely
\begin{equation}\label{eq:KRextremals}
	\left\{ \pm \frac{1}{\alpha}\delta_x \, \big \vert\, x \in \Omega \right\} \cup \left\{ \frac{1}{\beta +|x-y|^p}\big(\delta_x-\delta_y\big) \, \middle \vert\, x,y \in \Omega,\ 0 < |x-y|^p < 2\alpha - \beta \right\}.
\end{equation}
\end{thm}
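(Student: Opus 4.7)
The plan rests on exploiting the infimal convolution structure $\|\cdot\|_{\KR_p^{\ab}} = \mathcal{W}_p^\beta \,\square\, (\alpha|\cdot|(\Omega))$, where I write $\mathcal{W}_p^\beta(\nu) := W_p(\nu^+,\nu^-) + \tfrac{\beta}{2}|\nu|(\Omega) + \mathbf{1}_{\nu(\Omega)=0}$. Since both summands are positively $1$-homogeneous, the unit ball factors as $B_{\KR} = \operatorname{conv}(B_\mathcal{W} \cup B_{\alpha\TV})$ where $B_\mathcal{W} := \{\nu:\mathcal{W}_p^\beta(\nu)\le 1\}$ and $B_{\alpha\TV} := \{\eta:\alpha|\eta|(\Omega)\le 1\}$, and the general fact $\Ext(\operatorname{conv}(A\cup B)) \subseteq \Ext(A)\cup\Ext(B)$ reduces the proof to (a) computing $\Ext(B_\mathcal{W})$ and $\Ext(B_{\alpha\TV})$ separately, and (b) deciding which of the resulting candidates remain extremal in $B_{\KR}$.

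For (a), $\Ext(B_{\alpha\TV}) = \{\pm\tfrac{1}{\alpha}\delta_x : x\in\Omega\}$ is standard. For $\Ext(B_\mathcal{W})$, I would take $\nu$ extremal with $\mathcal{W}_p^\beta(\nu) = 1$, choose an optimal coupling $\gamma \in \Pi(\nu^+,\nu^-)$ so that $\nu = \int(\delta_x - \delta_y)\,d\gamma$ and $\int(\beta + |x-y|^p)\,d\gamma = 1$, and note that every measurable splitting $\gamma = \gamma|_A + \gamma|_{A^c}$ induces a convex decomposition of $\nu$ inside $B_\mathcal{W}$ with weights $\lambda_A := \int_A(\beta+|x-y|^p)\,d\gamma$ summing to $1$. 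Extremality then forces $\nu_A := \int_A(\delta_x - \delta_y)\,d\gamma = \lambda_A\nu$ for every $A$, which translates into $\delta_x - \delta_y = (\beta + |x-y|^p)\nu$ for $\gamma$-almost every $(x,y)$; since $(x,y) \mapsto \mathcal{D}_\beta(x,y)$ is injective, this pins $\gamma$ to a single atom at $(x_0, y_0)$, so that $\nu = \mathcal{D}_\beta(x_0, y_0)$ after normalization.

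For (b), Diracs are always extremal in $B_{\KR}$: given $\tfrac{1}{\alpha}\delta_x = \lambda\mu_1 + (1-\lambda)\mu_2$ with $\mu_i \in B_{\KR}$, I would test against the bump $\phi(y) := \max(0,\, \alpha - |y-x|^p)$, which lies in the predual ball $B_{\KR}^\circ = \{\phi \in C_0 : \|\phi\|_\infty \leq \alpha,\ |\phi(z)-\phi(w)| \leq \beta + |z-w|^p\}$ and satisfies $\langle\phi, \tfrac{1}{\alpha}\delta_x\rangle = 1$, making it a dual certificate for each $\mu_i$. Complementary slackness on the optimal decomposition $\mu_i = \nu_i + \eta_i$ forces $\nu_i = 0$ (because $\beta > 0$ leaves the equality set $\{(z,w):\phi(z)-\phi(w) = \beta + |z-w|^p\}$ empty) and $\eta_i$ non-negative, supported at $\{x\}$, of total mass $1/\alpha$, so $\mu_i = \tfrac{1}{\alpha}\delta_x$. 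Dipoles with $|x_0-y_0|^p \geq 2\alpha - \beta$ are excluded directly: strict inequality gives $\|\mathcal{D}_\beta(x_0,y_0)\|_{\KR} < 1$ via $\nu = 0$ in the infimum, while equality exhibits $\mathcal{D}_\beta(x_0,y_0) = \tfrac{1}{2}\cdot\tfrac{1}{\alpha}\delta_{x_0} + \tfrac{1}{2}\bigl(-\tfrac{1}{\alpha}\delta_{y_0}\bigr)$ as a midpoint of two extremal Diracs. For $|x_0-y_0|^p < 2\alpha - \beta$, I would construct a certificate $\phi^*$ as a capped affine function along the direction $x_0 - y_0$ with endpoint values $\pm\tfrac{\beta+|x_0-y_0|^p}{2} < \alpha$; a parallel slackness argument yields $\eta_i = 0$ (from $\|\phi^*\|_\infty < \alpha$) and concentrates the optimal coupling for $\nu_i$ on the equality set of $\phi^*$, after which the constraint $\lambda\mu_1 + (1-\lambda)\mu_2 = \mathcal{D}_\beta(x_0,y_0)$ pins $\mu_i$ down to $\mathcal{D}_\beta(x_0, y_0)$.

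The main obstacle is the construction and verification of $\phi^*$ for the dipole case: one has to check $|\phi^*(z)-\phi^*(w)| \leq \beta + |z-w|^p$ both within the linear region and across its capped boundary, which reduces to a single-variable inequality $(\beta + d^p)r/d \leq \beta + r^p$ for $r = |z-w|$, $d = |x_0-y_0|$, that is sharp exactly at $r = d$; this is also the step where the strict condition $|x_0-y_0|^p < 2\alpha - \beta$ becomes essential, as it is what keeps $\|\phi^*\|_\infty < \alpha$. A second subtle point in step (a) is the passage from $\nu_A = \lambda_A\nu$ for all measurable $A$ to a single-atom $\gamma$, which requires combining a countable family of $\gamma$-almost-everywhere pointwise identities indexed by a dense subfamily of $C_0$ test functions into a simultaneous statement.
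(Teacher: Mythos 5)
Your proposal is correct, but it reaches the characterization by a genuinely different route than the paper. The reduction to candidates is essentially shared: the paper invokes \cite[Lem.~3.4]{IglWal22} for the fact that extremal points of the ball of an exact infimal convolution of gauges lie among the extremal points of the two constituent balls (exactness, Lemma~\ref{lem:krexact}, is also what justifies your identity $B_{\KR}=\operatorname{conv}(B_{\mathcal{W}}\cup B_{\alpha\TV})$ without taking a closure), and your splitting-the-optimal-plan computation of $\Ext(B_{\mathcal W})$ is a compact substitute for the chain Lemma~\ref{lem:balancedtvext}--Lemma~\ref{lem:sumextremals}--Proposition~\ref{prop:onlydipolesext}; you only need the inclusion of $\Ext(B_{\mathcal W})$ into the set of rescaled dipoles, which is all your argument uses. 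The real divergence is in your step (b): the paper proves extremality of $\pm\delta_x/\alpha$ and of the short dipoles (Propositions~\ref{prop:deltasextkr} and~\ref{prop:dipolesextkr}) by showing that the inner infimal-convolution problem has a unique minimizer ($\bar\nu=0$, respectively $\bar\nu=\mathcal{D}_\beta(x,y)$), the dipole case requiring a geometric contraction $\Xi$ onto the segment $[x,y]$ followed by a chain of triangle inequalities; you instead exhibit explicit dual certificates and argue by complementary slackness. Both close the argument: the certificate route is more modular and sidesteps the contraction step, while the paper's route additionally identifies the optimal inner $\bar\nu$, which is then reused for the optimality conditions of Theorem~\ref{thm:firstorder}. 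Two places in your sketch must be written out with care: (i) in dimension $n\geq 2$ the equality set $\{(z,w)\,:\,\phi^*(z)-\phi^*(w)=\beta+|z-w|^p\}$ is not the singleton $\{(x_0,y_0)\}$ but the full family of translates $(x_0+v,y_0+v)$ with $v\perp(x_0-y_0)$, so the final elimination of all translates except $v=0$ via the constraint $\lambda\mu_1+(1-\lambda)\mu_2=\mathcal{D}_\beta(x_0,y_0)$ and the nonnegativity of the disintegrating weights is genuinely needed, not a formality; (ii) the verification $\min\bigl((\beta+d^p)r/d,\ \beta+d^p\bigr)\leq\beta+r^p$ must be split into $r\leq d$ (concavity of $r\mapsto\beta+r^p-(\beta+d^p)r/d$, positive at $r=0$ and vanishing at $r=d$) and $r\geq d$, where the cap is essential because the uncapped affine function violates the constraint there. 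With those details supplied, the proof is complete.
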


\subsection{Balanced measures}
We start by first excluding the last term of \eqref{eq:def-KRbp}, which is equivalent to setting $\alpha = 0$ or restricting to the subspace of balanced measures.
\begin{prop}\label{prop:dipolesareext}
Assume $p \in (0,1)$. Then, rescaled dipoles of the form
\begin{equation}
	\frac{1}{|x-y|^p}(\delta_x - \delta_y)\ \text{ for }\ (x,y) \in \Omega \times \Omega\ \text{ with }\ x \neq y.
\end{equation}
are extremal points of the set
\begin{equation}\label{eq:theball}
	\mathcal{B}_p := \big\{ \mu \in \M(\Omega)\, \big \vert\, \mu(\Omega)=0, \  W_p(\mu^+, \mu^-) \leq 1 \big\}.
\end{equation}
\end{prop}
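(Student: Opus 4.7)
The plan is to exploit the Kantorovich-Rubinstein duality \eqref{eq:Wpduality} combined with the strict subadditivity of $t \mapsto t^p$ for $p \in (0,1)$. Set $d := |x-y|^p$ and $\rho := \frac{1}{d}(\delta_x - \delta_y)$; then $\rho \in \mathcal{B}_p$ since $\rho(\Omega) = 0$ and $W_p(\rho^+, \rho^-) = \frac{1}{d}|x-y|^p = 1$. Assume $\rho = \lambda\mu_1 + (1-\lambda)\mu_2$ with $\mu_1, \mu_2 \in \mathcal{B}_p$ and $\lambda \in (0,1)$; the goal is to show $\mu_1 = \mu_2 = \rho$.

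The first step is to introduce the two potentials $\psi_0(z) := |z - y|^p$ and $\psi_1(z) := -|z - x|^p$, each of which is $1$-Lipschitz with respect to the metric $(a, b) \mapsto |a-b|^p$ by the reverse triangle inequality for that metric. Both satisfy $\int \psi_j \dd\rho = 1$, so combining the assumed decomposition with the duality bound $\int \psi_j \dd\mu_i \leq W_p(\mu_i^+, \mu_i^-) \leq 1$ yields
\begin{equation*}
1 = \int \psi_j \dd\rho = \lambda \int \psi_j \dd\mu_1 + (1-\lambda) \int \psi_j \dd\mu_2 \leq 1,
\end{equation*}
equality throughout. Therefore $W_p(\mu_i^+, \mu_i^-) = 1$ and both $\psi_0, \psi_1$ are optimal dual potentials for $W_p(\mu_i^+, \mu_i^-)$ for $i = 1, 2$.

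Next, for any optimal plan $\gamma_i \in \Pi(\mu_i^+, \mu_i^-)$, complementary slackness gives $\psi_j(u) - \psi_j(v) = |u-v|^p$ for $\gamma_i$-a.e. $(u, v)$. The identity coming from $\psi_0$ rearranges to $|u-y|^p = |u-v|^p + |v-y|^p$; combined with the triangle inequality $|u-y| \leq |u-v| + |v-y|$ and the \emph{strict} subadditivity of $t \mapsto t^p$ on $(0,\infty)$ (valid precisely because $p<1$), this forces $|u-v| = 0$ or $|v-y| = 0$. Since $\mu_i^+ \perp \mu_i^-$ rules out $u = v$ on $\supp \gamma_i$, we obtain $v = y$ $\gamma_i$-a.e. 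Symmetrically $\psi_1$ gives $u = x$. Hence $\gamma_i = c_i\,\delta_{(x,y)}$ for some $c_i \geq 0$, so $\mu_i = c_i(\delta_x - \delta_y)$; the normalization $W_p(\mu_i^+, \mu_i^-) = c_i d = 1$ fixes $c_i = 1/d$ and thus $\mu_i = \rho$. The decisive step, and the only real obstacle, is the invocation of strict subadditivity to collapse $\supp \gamma_i$ onto $\{(x, y)\}$; this is precisely why the case $p = 1$ must be excluded from the statement, since there $t \mapsto t^p$ is only weakly subadditive and dipoles typically fail to be extremal.
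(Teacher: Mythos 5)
Your proof is correct, and it takes a genuinely different route from the paper's. The paper argues on the primal side: it writes the two competitors via their Hahn decompositions, isolates the possible cancellation measures $\eta_A,\eta_B$, and uses the strict concavity of the cost $|x-y|^p$ (so that optimal plans leave common mass in place, forcing the unique optimal plan $\delta_{(x,y)}+(\id,\id)_\#(\eta_A,\eta_B)$) to derive a contradiction unless $\eta_A=\eta_B=0$. You instead work on the dual side: the explicit potentials $\psi_0(z)=|z-y|^p$ and $\psi_1(z)=-|z-x|^p$ are exposed functionals attaining the value $1$ on the dipole, so equality propagates through the convex combination, both $\psi_0,\psi_1$ become optimal Kantorovich potentials for each $\mu_i$, and complementary slackness plus the strict subadditivity of $t\mapsto t^p$ collapses every optimal plan onto $\{(x,y)\}$ (the diagonal being $\gamma_i$-null because the marginals are mutually singular). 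Both arguments hinge on the same underlying fact — strictness of the concave cost for $p<1$ — but yours is shorter, avoids the $\eta_A,\eta_B$ bookkeeping, and produces explicit dual certificates of extremality; it also makes transparent exactly why $p=1$ must be excluded. The paper's primal machinery, on the other hand, is recycled almost verbatim in its Lemma \ref{lem:balancedtvext} for the total-variation-normalized dipoles, so it earns its keep elsewhere. One cosmetic point: rather than saying singularity of $\mu_i^{\pm}$ ``rules out $u=v$ on $\supp\gamma_i$,'' it is cleaner to note that the restriction of $\gamma_i$ to the diagonal has equal marginals dominated by both $\mu_i^+$ and $\mu_i^-$, hence vanishes, so the diagonal is $\gamma_i$-negligible — which is all the a.e.\ argument requires.
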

\begin{proof}
Assume we have for $\lambda \in (0,1)$ a convex combination 
\begin{equation}
	\frac{1}{|x-y|^p}(\delta_{x} - \delta_{y}) = \lambda \nu_1 + (1-\lambda) \nu_2,
\end{equation}
and let $\nu_1 = \nu_1^+ - \nu_1^-$ and $\nu_2 = \nu_2^+ - \nu_2^-$ be the Hahn decompositions of $\nu_i$. We have then
\begin{equation}\label{eq:dipoledecomp2}
	\frac{1}{|x-y|^p}(\delta_{x} - \delta_{y}) = \lambda \nu^+_1 - (1-\lambda) \nu^-_2 + (1-\lambda) \nu^+_2 - \lambda \nu^-_1.
\end{equation}
If we knew that $\lambda \nu^+_1 + (1-\lambda) \nu^+_2$ and $\lambda \nu^-_1 + (1-\lambda) \nu^-_2$ are the positive and negative parts $\delta_x / |x-y|^p$ and $\delta_y / |x-y|^p$ of the left hand side, we could conclude immediately. This is not true in general however, since there might be cancellations between the first two or last two terms of \eqref{eq:dipoledecomp2}. Let us denote the results of these partial sums as
\begin{equation}
	\xi_A := \lambda \nu^+_1 - (1-\lambda) \nu^-_2,\ \text{ and }\ \xi_B := (1-\lambda) \nu^+_2 - \lambda \nu^-_1.
\end{equation}
For these, we can also consider the Hahn decompositions $\xi_A = \xi^+_A - \xi^-_A$ and $\xi_B = \xi^+_B - \xi^-_B$, which give us that
\begin{equation}\begin{aligned}
	\lambda \nu^+_1 = \xi_A^+ + \eta_A \ \text{ and }\ (1-\lambda)\nu_2^- = \xi_A^- + \eta_A, \ \text{ for }\  \eta_A := \lambda \nu^+_1 - \xi_A^+ = (1-\lambda)\nu_2^- - \xi_A^-, \\
	\lambda \nu^-_1 = \xi_B^- + \eta_B \ \text{ and }\ (1-\lambda)\nu_2^+ = \xi_B^+ + \eta_B, \ \text{ for }\  \eta_B := \lambda \nu^-_1 - \xi_B^- = (1-\lambda)\nu_2^+ - \xi_B^+,
\end{aligned}\end{equation}
so that $\eta_A$ and $\eta_B$ are precisely the potential cancellations happening in \eqref{eq:dipoledecomp2}. 
In particular we have $\eta_A \in \mathcal{M}_+(\Omega)$, since given a measurable $C \subset \Omega$ such that $\eta_A(C) < 0$ we would have both
\begin{equation}
\xi_A^+(C) > \lambda \nu_1^+(C) \geq 0 \ \text{ and }\ \xi_A^-(C) > (1-\lambda) \nu_2^-(C)\geq 0
\end{equation}
contradicting the optimality of the Hahn decomposition $\xi_A = \xi^+_A - \xi^-_A$. A similar argument proves that  $\eta_B \in \mathcal{M}_+(\Omega)$. Moreover, since $\eta_A$ and $\eta_B$ appear with different signs in the decompositions $\nu_1^+, \nu_1^-$ and $\nu_2^+, \nu_2^-$ we necessarily have that they are singular to each other.

This implies as well that we can write
\begin{gather}
	\frac{1}{|x-y|^p}(\delta_{x} - \delta_{y}) = \lambda \tilde{\nu}_1 + (1-\lambda) \tilde{\nu}_2,\ \text{ where}\\
	\tilde{\nu}_1 := \nu_1 - \frac{1}{\lambda}\eta_A + \frac{1}{\lambda}\eta_B \ \text{ and }\ \tilde{\nu}_2 := \nu_2 + \frac{1}{1-\lambda}\eta_A - \frac{1}{1-\lambda}\eta_B.
\end{gather}
Now, in this decomposition there can be no cancellations between $\tilde{\nu}_1$ and $\tilde{\nu}_2$. Indeed, if we suppose that there exists a measurable set $C \subset \Omega$ such that $\tilde \nu_1(C) <0$ and $\tilde \nu_2(C)>0$ (or $\tilde \nu_1(C) <0$ and $\tilde \nu_2(C) >0$) the simple computation
\begin{equation}
\tilde \nu_1  = \frac{1}{\lambda} (\xi^+_A - \xi^-_B) \,, \ \text{ and }\  \tilde \nu_2  = \frac{1}{1-\lambda} (\xi_B^+ - \xi_A^-)\,
\end{equation}
contradicts the optimality of the Hahn decomposition $\xi_A = \xi^+_A - \xi^-_A$ (or the optimality of the Hahn decomposition $\xi_B = \xi^+_B - \xi^-_B$). Since no cancellations are happening between $\tilde \nu_1$ and $\tilde \nu_2$ there exist some $c_1^x, c_1^y, c_2^x, c_2^y >0$ with $\lambda c_1^x + (1-\lambda) c_2^x=1$ and $\lambda c_1^y + (1-\lambda)  c_2^y=1$ such that we have
\begin{equation}\begin{aligned}
	\nu_1^+ = \frac{c_1^x}{|x-y|^p}\delta_x + \frac{1}{\lambda}\eta_A, &\quad \nu_1^- = \frac{c_1^y}{|x-y|^p}\delta_y + \frac{1}{\lambda}\eta_B,\\
	\nu_2^+ = \frac{c_2^x}{|x-y|^p}\delta_x + \frac{1}{1-\lambda}\eta_B, &\quad \nu_2^- = \frac{c_2^y}{|x-y|^p}\delta_y + \frac{1}{1-\lambda}\eta_A.
\end{aligned}\end{equation}
Let us denote by $\gamma^{\opt}_i$ any optimal plans (see for example \cite[Thm.~1.4]{San15} for existence) for $W_p(\nu_i^+, \nu_i^-)$ with $i=1,2$. From the above expressions we observe that if $\eta_A \neq 0$ or $\eta_B \neq 0$, then necessarily
\begin{equation}\label{eq:sendoutsideeach}
	\gamma^{\opt}_i\big( \Omega \times \Omega \setminus \big ( \{(x,y)\} \cup \{(z,z) \,\big\vert\, z\in \Omega\} \big)\big) > 0.
\end{equation}
Indeed, since $\eta_A \perp \eta_B$ there holds that $\eta_A \neq \eta_B$.  Moreover, $x\notin \supp \eta_A$ and $y\notin \supp \eta_B$ due to the relations $\lambda c_1^x + (1-\lambda) c_2^x=1$ and $\lambda  c_1^y + (1-\lambda) c_2^y=1$ and \eqref{eq:dipoledecomp2}.
We then consider the convex combination transport plan
\begin{equation}
	\gamma_C := \lambda \gamma^{\opt}_1 + (1-\lambda) \gamma^{\opt}_2 \text{ with }\int_{\Omega \times \Omega} |z-w|^p \dd \gamma_C(z,w) = 1,
\end{equation}
for which
\begin{equation}
\gamma_C \in \Pi\big(\lambda \nu_1^+ + (1-\lambda) \nu_2^+,\ \lambda \nu_1^- + (1-\lambda) \nu_2^- \big) = \Pi\left(\frac{1}{|x-y|^p}\delta_x + \eta_A + \eta_B,\ \ \frac{1}{|x-y|^p}\delta_y + \eta_A + \eta_B \right),
\end{equation}
and because of \eqref{eq:sendoutsideeach} also
\begin{equation}\label{eq:sendoutside}
	\gamma_C\big( \Omega \times \Omega \setminus \big ( \{(x,y)\} \cup \{(z,z) \,\big\vert\, z\in \Omega\} \big)\big) > 0.
\end{equation}
Using the strict concavity of the cost (see \cite[Thm.~2.2]{PegSanPia15}) we obtain that any optimal plan $\gamma_0$ between $\delta_x/|x-y|^p + \eta_A + \eta_B$ and $\delta_y/|x-y|^p + \eta_A + \eta_B$ must leave $\eta_A$ and $\eta_B$ invariant. But this means that necessarily
\begin{equation}\label{eq:theuniqueplan}
	\gamma_0 = \delta_{(x,y)} + (\id, \id)_\#(\eta_A, \eta_B),
\end{equation}
which implies that 
\begin{equation}
	W_p\left(\frac{1}{|x-y|^p}\delta_x + \eta_A + \eta_B,\ \ \frac{1}{|x-y|^p}\delta_y + \eta_A + \eta_B \right)=1,
\end{equation}
but since the cost of $\gamma_C$ is also $1$ this means that the latter is also optimal and hence $\gamma_C = \gamma_0$, which leads to a contradiction with \eqref{eq:sendoutside} unless $\eta_A = \eta_B =0$.
\end{proof}

\begin{rem}\label{rem:unbounded}
Despite being described as a ball, the set $\mathcal{B}_p$ contains many directions which are unbounded in the natural total variation sense. One may take for example $\Omega = \overline{B(0,1)} \subset \R^n$, the rescaling by a factor $0<r<1$ defined by $S_r(x)=rx$ and the corresponding pushforwards $(S_r)_\# \nu^+$, $(S_r)_\# \nu^-$, and $(S_r, S_r)_\# \gamma$ for any $\gamma \in \Pi(\nu^+, \nu^-)$. In case $\nu$ is concentrated on finitely many points (so that the total mass of $(S_r)_\# \nu^+$ and $(S_r)_\# \nu^-$ is independent of $r$) we then have 
\begin{equation}
W_p(r^{-p}(S_r)_\# \nu^+, r^{-p}(S_r)_\# \nu^-) = W_p(\nu^+, \nu^-),
\end{equation}
which might lead to the intuition that $\mathcal{B}_p$ is more similar to a cone. However, this view is also not quite accurate, since the zero measure is not an extremal point for any $p \in (0,\infty)$. Indeed, for all $x\neq y$ the convex decomposition
\begin{equation}
    0 = \frac{1}{2}(\delta_x - \delta_y) + \frac{1}{2}(\delta_y - \delta_x)
\end{equation}
is always nontrivial.
\end{rem}

\begin{prop}\label{prop:onlydipolesext}
For all $p \in (0,1]$ and $\beta \in [0, \infty)$, any extremal point $\nu$ of
\begin{equation}
    \left\{ \mu \in \M(\Omega) \, \middle \vert\, \mu(\Omega)=0, \  W_p(\mu^+, \mu^-) + \frac{\beta}{2} |\mu|(\Omega) \leq 1 \right\}
\end{equation}
must be a rescaled dipole, that is, there exist points $x,y\in\Omega$ with $x \neq y$ for which $\nu = \mathcal{D}_\beta(x,y)$.
\end{prop}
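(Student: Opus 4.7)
The plan is to show that if $\nu$ is extremal, then an optimal transport plan for $W_p(\nu^+, \nu^-)$ must be concentrated on a single off-diagonal point, from which $\nu$ being a rescaled dipole follows directly. Setting $\|\mu\| := W_p(\mu^+,\mu^-) + (\beta/2)|\mu|(\Omega)$, which is a norm on balanced measures, I would first eliminate $\nu = 0$ (excluded by Remark~\ref{rem:unbounded}) and $\|\nu\| < 1$ (excluded by the trivial decomposition $\nu = \tfrac{1}{2}(1+\varepsilon)\nu + \tfrac{1}{2}(1-\varepsilon)\nu$ for small $\varepsilon > 0$), so that $\nu \neq 0$ and $\|\nu\| = 1$. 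Then I would fix an optimal plan $\gamma \in \Pi(\nu^+, \nu^-)$, which is concentrated off the diagonal since $\nu^+ \perp \nu^-$.

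For every Borel $A \subseteq \Omega \times \Omega$ define $\tilde\nu_A := (\pi_1)_\# (\gamma|_A) - (\pi_2)_\# (\gamma|_A)$, which is balanced and satisfies $\nu = \tilde\nu_A + \tilde\nu_{A^c}$. Since $W_p$ depends only on the difference of its arguments (by the duality formula~\eqref{eq:Wpduality}) and $\gamma|_A$ is an admissible plan between its marginals, we get $W_p(\tilde\nu_A^+, \tilde\nu_A^-) \leq \int_A |x-y|^p \dd\gamma$, and clearly $|\tilde\nu_A|(\Omega) \leq 2\gamma(A)$, so
\[
\|\tilde\nu_A\| \leq \int_A |x-y|^p \dd\gamma + \beta\, \gamma(A).
\]
Summing the analogous bound on $A^c$ yields $\|\tilde\nu_A\| + \|\tilde\nu_{A^c}\| \leq 1$, which combined with the triangle inequality $\|\nu\| \leq \|\tilde\nu_A\| + \|\tilde\nu_{A^c}\|$ (Proposition~\ref{prop:triangle}) forces equality throughout. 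In particular $\|\tilde\nu_A\| = \int_A (|x-y|^p + \beta) \dd\gamma$ for every Borel $A$, and whenever both $\|\tilde\nu_A\|, \|\tilde\nu_{A^c}\|$ are nonzero the relation $\|\tilde\nu_A\| + \|\tilde\nu_{A^c}\| = 1$ exhibits a convex decomposition of $\nu$ inside the unit ball, forcing $\tilde\nu_A = \|\tilde\nu_A\| \nu$ by extremality (the identity also holds trivially when one of the norms vanishes).

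Testing this equality against arbitrary $\phi \in \mathcal{C}_0(\Omega)$ yields, for every Borel $A$,
\[
\int_A \bigl(\phi(x) - \phi(y)\bigr) \dd\gamma(x,y) = \Bigl(\int_\Omega \phi \dd\nu\Bigr) \int_A (|x-y|^p + \beta) \dd\gamma,
\]
whence $\phi(x) - \phi(y) = \bigl(\int_\Omega \phi \dd\nu\bigr)(|x-y|^p + \beta)$ $\gamma$-a.e. Using the separability of $\mathcal{C}_0(\Omega)$, I would fix a countable dense family $\{\phi_n\}$ and intersect the associated full-$\gamma$-measure sets to obtain a single $G$ on which the identity holds for every $\phi_n$, hence by uniform approximation for every $\phi \in \mathcal{C}_0(\Omega)$. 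Since both sides are continuous in $(x, y)$, the identity propagates to $\supp\gamma \subseteq \overline{G}$. Picking any $(x_*, y_*) \in \supp\gamma$ with $x_* \neq y_*$—which exists because $\gamma$ is nonzero and concentrated off the diagonal—the identity becomes $\int \phi \dd(\delta_{x_*} - \delta_{y_*}) = (|x_*-y_*|^p + \beta) \int \phi \dd\nu$ for all $\phi$, giving $\nu = \mathcal{D}_\beta(x_*, y_*)$ as required.

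The main obstacle is the bookkeeping in the second paragraph: the marginals of $\gamma|_A$ may fail to be mutually singular, so both bounds $W_p(\tilde\nu_A^+, \tilde\nu_A^-) \leq \int_A|x-y|^p\dd\gamma$ and $|\tilde\nu_A|(\Omega) \leq 2\gamma(A)$ are a priori strict. The observation that the active constraint $\|\nu\| = 1$ together with the triangle inequality forces both slacks to vanish for every $A$ simultaneously is the linchpin enabling the subsequent pointwise analysis; the remaining density-and-continuity argument is then routine.
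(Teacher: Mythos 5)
Your proof is correct, and while it rests on the same engine as the paper's --- restricting an optimal plan $\gamma$ for $W_p(\nu^+,\nu^-)$ to a measurable piece, noting that the transport cost and mass split additively over the pieces, and converting this via the triangle inequality of Proposition~\ref{prop:triangle} into a convex decomposition of $\nu$ inside the ball --- the execution is genuinely different. The paper argues by contradiction: it assumes $\nu^+$ (say) charges two disjoint sets $E,F\subset\Omega$, restricts $\gamma_{\opt}$ to the single partition $\{(\pi_1)^{-1}(E),(\pi_1)^{-1}(F)\}$, rescales the two pieces by $C_E,C_F$ and contradicts extremality; this needs only one well-chosen splitting but leaves the exact normalization of the surviving dipole to be read off separately from $\|\nu\|=1$. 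You instead run the decomposition over \emph{every} Borel $A\subseteq\Omega\times\Omega$, use extremality to force $\tilde\nu_A=\|\tilde\nu_A\|\,\nu$ for all $A$, and then identify $\gamma$-a.e.\ densities and pass to $\supp\gamma$ by separability and continuity. This buys a direct (non-contradiction) proof that delivers the dipole together with its normalization $1/(\beta+|x_*-y_*|^p)$ in one stroke and sidesteps the case analysis on which of $\nu^\pm$ has large support, at the price of the extra Radon--Nikodym/density bookkeeping in your final two paragraphs. The delicate points --- that the marginals of $\gamma|_A$ need not be mutually singular so both estimates on $\|\tilde\nu_A\|$ are a priori only inequalities, that the saturation $\|\nu\|=1$ forces them to be equalities for every $A$, that $\|\cdot\|$ is definite on balanced measures even for $\beta=0$ (so the degenerate case $\|\tilde\nu_{A^c}\|=0$ is harmless), and that $\supp\gamma$ contains an off-diagonal point because $\nu^+\perp\nu^-$ --- are all correctly identified and handled.
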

\begin{proof}
Assume for the sake of contradiction that $\nu$ is not a dipole. Then, for the Hahn decomposition $\nu=\nu^ + - \nu^-$ either $\nu^+$ or $\nu^-$ has a support consisting of more than one point. Without loss of generality, we assume it is the former. Therefore, there exists a set $E \subset \Omega$ for which $\nu^+(E) >0$ and $\nu^+(F)>0$ for $F := \Omega \setminus E$ hold simultaneously, which induces a nontrivial decomposition
\begin{equation}
	\nu^+ = \nu^+ \mres E + \nu^+ \mres F.
\end{equation}
Now, let $\gamma_{\opt}$ be an optimal transportation plan for $W_p(\nu^+, \nu^-)$. We can use it to define  a ``pushforward measure'' of these sets by 
\begin{equation}\begin{aligned}
	\mu_E &:= (\pi_2)_\# \left[ \gamma_{\opt} \mres \big( (\pi_1)^{-1}(E) \big)\right],\ \text{ and}\\
	\mu_F &:= (\pi_2)_\# \left[ \gamma_{\opt} \mres \big( (\pi_1)^{-1}(F) \big) \right],
\end{aligned}\end{equation}
for which, using that $(\pi_1)_\# \gamma_{\opt} = \nu^+$, we have
\begin{equation}
	\mu_E(\Omega) = \left[ \gamma_{\opt} \mres \big( (\pi_1)^{-1}(E) \big)\right] \big(\Omega \times \Omega \big) = \gamma_{\opt}\big( (\pi_1)^{-1}(E) \big) = \nu^+(E) = \big( \nu^+ \mres E \big)(\Omega).
\end{equation}
Moreover, we notice that
\begin{equation}\begin{aligned}
	&\gamma_{\opt} \mres \big( (\pi_1)^{-1}(E) \big) \in \Pi \big( \nu^+ \mres E,\, \mu_E \big ), \text{ so that }\\
	&W_p(\nu^+ \mres E, \mu_E) \leq \int_{(\pi_1)^{-1}(E)} |z-w|^p \dd \gamma_{\opt}(z,w)
\end{aligned}\end{equation}
and similarly for $\mu_F$, and in fact
\begin{equation}\begin{aligned}
	&W_p(\nu^+ \mres E, \mu_E) + W_p(\nu^+ \mres F, \mu_F) +\beta |\nu^+ \mres E|(\Omega) + \beta |\nu^+ \mres F|(\Omega)\\
	&\qquad\leq\int_{(\pi_1)^{-1}(E)} |z-w|^p \dd \gamma_{\opt}(z,w) + \int_{(\pi_1)^{-1}(F)} |z-w|^p \dd \gamma_{\opt}(z,w) + \beta |\nu^+|(\Omega)\\
	&\qquad= W_p(\nu^+, \nu^-) +\frac{\beta}{2}|\nu|(\Omega)= 1.
\end{aligned}\end{equation}
With this in view let us define
\begin{equation}\begin{aligned}
    C_E := W_p(\nu^+ \mres E, \mu_E) &+\beta |\nu^+ \mres E|(\Omega) \in (0,1),\\
    C_F := W_p(\nu^+ \mres F, \mu_F) &+\beta |\nu^+ \mres F|(\Omega) \in (0,1),\\
    \nu_{1+} := \frac{C_E + C_F}{C_E}\,\nu^+ \mres E&,\quad \nu_{1-} := \frac{C_E + C_F}{C_E}\,\mu_E,\\
    \nu_{2+} := \frac{C_E + C_F}{C_F}\,\nu^+ \mres F&,\quad \nu_{2-} := \frac{C_E + C_F}{C_F}\,\mu_F.
\end{aligned}\end{equation}
For these, we have
\begin{equation}\begin{aligned}
	&\nu_{1+} \perp \nu_{1-},\ \ \nu_{1+}(\Omega) = \nu_{1-}(\Omega),\ \ W_p(\nu_{1+}, \nu_{1-}) + \beta |\nu_1|(\Omega) \leq 1,\text{ and}\\
	&\nu_{2+} \perp \nu_{2-},\ \ \nu_{2+}(\Omega) = \nu_{2-}(\Omega),\ \ W_p(\nu_{2+}, \nu_{2-}) + \beta |\nu_2|(\Omega) \leq 1,
\end{aligned}\end{equation}
but also 
\begin{equation}
	\nu = \lambda \big( \nu_{1+} - \nu_{1-} \big) + (1-\lambda) \big( \nu_{2+} - \nu_{2-} \big) \text{ for }\lambda := \frac{C_E}{C_E + C_F} \in (0,1),
\end{equation}
which is a contradiction with $\nu$ being extremal in \eqref{eq:theball}.
\end{proof}

\begin{ex}
Interestingly, dipoles are not extremal in case $p=1$. To see this, just consider $\Omega = [0,1]$, a number $a \in (0,2)$ and the measures
\begin{equation}\begin{aligned}
\nu_1 &:= \left(1-\frac{a}{2}\right)\delta_0 + a\, \delta_{1/2} - \left(1+\frac{a}{2}\right)\delta_1, \text{ and }\\
\nu_2 &:= \left(1+\frac{a}{2}\right)\delta_0 - a\, \delta_{1/2} - \left(1-\frac{a}{2}\right)\delta_1.
\end{aligned}\end{equation}
Then we have that
\begin{equation}
	W_1\left(\nu_1^+ ,\ \nu_1^- \right) = a \, \frac12 + \left(1 - \frac{a}{2} \right) = 1 = W_1\left(\nu_2^+ ,\ \nu_2^- \right),
\end{equation}
but also 
\begin{equation}
	\frac12 \nu_1 + \frac12 \nu_2 = \delta_0 - \delta_1.
\end{equation}
\end{ex}

In fact, this idea can be generalized to any measure and all $p \geq 1$:
\begin{prop}\label{prop:lackofextremals}
Assume that $\Omega$ is convex. Then if $p=1$ the set $\mathcal{B}_p$ of \eqref{eq:theball} has no nonzero extremal points, and if $p>1$ it is not convex.
\end{prop}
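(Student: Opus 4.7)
My plan is to treat the two assertions separately: for $p>1$ I will exhibit a concrete pair of elements of $\mathcal{B}_p$ whose midpoint leaves the set, and for $p=1$ I will show that every nonzero $\nu \in \mathcal{B}_1$ admits a nontrivial convex decomposition by splitting the optimal transport at its midpoint.

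\textbf{Non-convexity for $p>1$.} The assertion is vacuous when $\Omega$ is a singleton, so by convexity of $\Omega$ I may assume (after an affine reparametrization) that $[0,1] \subset \Omega$. I will use the three collinear points $0, 1/2, 1$ and set
\[
\nu_1 := 2^p(\delta_0 - \delta_{1/2}), \qquad \nu_2 := 2^p(\delta_{1/2} - \delta_1),
\]
both balanced with $W_p$-cost $2^p \cdot (1/2)^p = 1$, so that $\nu_1, \nu_2 \in \mathcal{B}_p$. Their average telescopes to $2^{p-1}(\delta_0 - \delta_1)$, whose $W_p$-cost equals $2^{p-1}$; since $p>1$ this strictly exceeds $1$, so the midpoint leaves $\mathcal{B}_p$.

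\textbf{Absence of nonzero extremals for $p=1$.} Fix a nonzero $\nu \in \mathcal{B}_1$ and an optimal plan $\gamma \in \Pi(\nu^+,\nu^-)$ for $W_1(\nu^+,\nu^-)$. The convexity of $\Omega$ lets me define the midpoint map $m(x,y) := (x+y)/2$ into $\Omega$, and I set $\eta := m_\# \gamma$. The $p=1$ identity $|x-y| = |x-m(x,y)| + |m(x,y)-y|$ is what makes the decomposition
\[
\mu_1 := 2(\nu^+ - \eta), \qquad \mu_2 := 2(\eta - \nu^-)
\]
work: each is balanced and $\nu = \tfrac12\mu_1 + \tfrac12\mu_2$. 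Moreover, the push-forward plans $2(\pi_1,m)_\#\gamma \in \Pi(2\nu^+, 2\eta)$ and $2(m,\pi_2)_\#\gamma \in \Pi(2\eta, 2\nu^-)$ both have total cost equal to $W_1(\nu^+,\nu^-)\leq 1$; combining this with the fact that $W_1$ depends only on the signed difference of its non-negative arguments (a consequence of \eqref{eq:Wpduality}) yields $W_1(\mu_i^+,\mu_i^-) \leq W_1(\nu^+,\nu^-) \leq 1$, so $\mu_1, \mu_2 \in \mathcal{B}_1$.

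The main obstacle is then to verify that the decomposition is nontrivial, that is, $\mu_1 \neq \mu_2$, equivalently $2\eta \neq \nu^+ + \nu^-$. Assuming first that $|\nu|$ has finite second moment (automatic for compactly supported $\nu$, and reducible to this case via truncation to balls $B_R$ and a weak* limit argument), a direct calculation gives the identity
\[
\int\!|x|^2\,\dd\eta \;-\; \int\!|x|^2\,\dd\!\left(\tfrac{\nu^+ + \nu^-}{2}\right) \;=\; -\tfrac14 \int|x-y|^2\,\dd\gamma(x,y).
\]
If $\mu_1 = \mu_2$, the left-hand side vanishes, forcing $\gamma$ to be supported on the diagonal $\{x=y\}$. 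Since $\nu^+ \perp \nu^-$, the marginal relations $(\pi_1)_\#\gamma = \nu^+$ and $(\pi_2)_\#\gamma = \nu^-$ then imply $\nu^+ = \nu^- = 0$, contradicting $\nu \neq 0$. Hence $\mu_1 \neq \mu_2$ and $\nu$ is not extremal.
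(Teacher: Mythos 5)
Your proof is correct and, for the $p=1$ half, is at bottom the same construction as the paper's: the paper forms the constant-speed geodesic midpoint $\nu^{1/2}=(\pi_{1/2})_\#\gamma_{\opt}$, which is exactly your $\eta=m_\#\gamma$, and decomposes $\nu$ along the one-parameter family $\nu_{1,2}=(1\mp\tfrac a2)\nu^+\pm a\nu^{1/2}-(1\pm\tfrac a2)\nu^-$; your $\mu_1,\mu_2$ are the $a=2$ endpoint of that family. Where you genuinely add something is the nontriviality check: both your decomposition and the paper's are trivial precisely when $2\eta=\nu^++\nu^-$, and the paper asserts nontriviality without argument, whereas your second-moment identity $\int|x|^2\dd\eta-\tfrac12\int|x|^2\dd(\nu^++\nu^-)=-\tfrac14\int|x-y|^2\dd\gamma$ actually rules this case out (forcing $\gamma$ onto the diagonal and hence $\nu^+=\nu^-=0$ by mutual singularity). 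For $p>1$ you diverge more: the paper deduces non-convexity from the general decomposition by observing that the cost bound $(1-\tfrac a2)+\tfrac a{2^p}$ is strictly below $1$ and rescaling, while you give an explicit three-point counterexample whose midpoint has cost $2^{p-1}>1$; yours is shorter and self-contained (modulo the degenerate case where $\Omega$ is a single point, which both proofs implicitly exclude).

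The one thin spot is the reduction to finite second moments when $\Omega$ is unbounded, which is permitted in this section of the paper. Your sketched fix --- truncate to balls and pass to a weak* limit --- does not obviously work: restricting $\gamma$ to $\overline{B_R}\times\overline{B_R}$ perturbs the hypothesis $2\eta=\nu^++\nu^-$ by marginals of $\gamma\res(\overline{B_R}\times\overline{B_R})^c$, whose second moments are exactly what you cannot control. A clean repair is to replace $|x|^2$ by $\phi(x)=\sqrt{1+|x|^2}$: the midpoint defect $2\phi(\tfrac{x+y}{2})-\phi(x)-\phi(y)$ is strictly negative off the diagonal by strict convexity, and is bounded in absolute value by $|x-y|$ since $\phi$ is $1$-Lipschitz, hence lies in $L^1(\gamma)$ because $W_1(\nu^+,\nu^-)\leq 1$. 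Testing the identity $2\eta=\nu^++\nu^-$ against the bounded truncations $\min(\phi,N)$ and letting $N\to\infty$ by dominated convergence then yields the same contradiction. For compact $\Omega$ (the standing assumption from Section~3 onward) your argument is complete as written.
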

\begin{proof}
Let $\nu$ be any nonzero measure with $\nu(\Omega) = 0$. Then, if $\gamma_{opt}$ is optimal for $W_p(\nu^+, \nu^-)$ and since we have assumed $p \geq 1$, we can construct (see \cite[Thm.~5.27]{San15}) a constant-speed geodesic in $p$-Wasserstein space $[0,1] \ni t \mapsto \nu^t$ between $\nu^+$ and $\nu^-$ as $\nu^t := (\pi_t)_\# \gamma_{\opt}$, where $\pi_t(x,y)=(1-t)x+ty$. With it we then define for $a \in (0,2)$ the measures
\begin{equation}\begin{aligned}
\nu_1 &:= \left(1-\frac{a}{2}\right)\nu^+ + a\, \nu^{1/2} - \left(1+\frac{a}{2}\right)\nu^-, \text{ and }\\
\nu_2 &:= \left(1+\frac{a}{2}\right)\nu^- - a\, \nu^{1/2} - \left(1-\frac{a}{2}\right)\nu^-.
\end{aligned}\end{equation}
For the first of these, since $\nu^t$ is a constant-speed geodesic between $\nu^+$ and $\nu^-$, we have (see \cite[Box~5.2]{San15}, for example) that
\begin{equation}\big( W_p(\nu^{1/2}, \nu^-) \big)^{1/p} = \frac{1}{2},\end{equation} 
which for some $\gamma_{\opt}^{1/2}$ optimal for $W_p(\nu^{1/2}, \nu^-)$ allows us to define the transportation plan
\begin{equation}
    \gamma_a := \left(1-\frac{a}{2}\right)\gamma^{\opt} + a \gamma_{\opt}^{1/2} \in \Pi\left(\left(1-\frac{a}{2}\right)\nu^+ + a\, \nu^{1/2}, \left(1+\frac{a}{2}\right)\nu^-\right),
\end{equation}
so that
\begin{equation}\begin{aligned}\label{eq:slidingmasscost}
	W_p(\nu_1^+, \nu_1^-) &\leq \int_{\Omega \times \Omega} |x-y|^p \dd\gamma_a(x,y) \\&=  \left(1-\frac{a}{2}\right)W_p(\nu^+, \nu^-) + a W_p(\nu^{1/2}, \nu^-) \\ &= \left(1-\frac{a}{2}\right) + \frac{a}{2^p} \leq 1.
\end{aligned}\end{equation}
That $W_p(\nu_2^+, \nu_2^-) \leq 1$ is obtained entirely similarly, and we have expressed $\nu$ as the nontrivial convex combination $\nu = \nu_1/2 + \nu_2/2$. Notice that if $p>1$, because of the denominator in the second term of \eqref{eq:slidingmasscost} there is $r>1$ such that $r\eta_1,r\eta_2 \in \mathcal{B}_p$, so the decomposition tells us that this set is not convex.
\end{proof}

We have seen in Remark \ref{rem:unbounded} that with $\beta=0$ the norm \eqref{eq:def-KRbp} is not coercive in $\mathcal{M}(\Omega)$. For practical applications this would be quite unwieldy, hence we penalize the mass of the balanced part $\nu$. Interestingly, this does not alter the structure of the extremal points but just their normalization, which allows us to maintain a clean interpretation of them in terms of transport.

\begin{lemma}\label{lem:balancedtvext}
The extremal points of the set
\begin{equation}\label{eq:balancedtvball}
	\left\{ \mu \in \M(\Omega) \,\middle\vert\, \mu(\Omega)=0, \ \frac{\beta}{2}|\mu| \leq 1 \right\}
\end{equation}
are the dipoles
\begin{equation}
	\frac{1}{\beta} (\delta_x - \delta_y)\ \text{ for }\ (x,y) \in \Omega \times \Omega\ \text{ with }\ x \neq y.
\end{equation}
\end{lemma}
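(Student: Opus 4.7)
The plan is to prove both inclusions in the characterization by working directly with the Hahn decomposition of a candidate extremal measure, without needing any optimal transport machinery as in Proposition~\ref{prop:onlydipolesext}. The key observation is that the set in \eqref{eq:balancedtvball} is simply $\{\mu \in \M(\Omega) : \mu(\Omega)=0,\ |\mu|(\Omega) \leq 2/\beta\}$, and a dipole $\frac{1}{\beta}(\delta_x-\delta_y)$ is a balanced measure whose total variation equals exactly $2/\beta$, i.e., it sits on the boundary.

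For the direction that dipoles are extremal, I would suppose $\mu_0:=\frac{1}{\beta}(\delta_x-\delta_y)$ equals $\lambda \nu_1 + (1-\lambda) \nu_2$ with $\nu_i$ in the set and $\lambda\in(0,1)$. The measure-valued triangle inequality $|\mu_0| \leq \lambda |\nu_1| + (1-\lambda)|\nu_2|$, integrated over $\Omega$, forces equality both in the total-mass bound (so $|\nu_i|(\Omega) = 2/\beta$) and as a measure identity. The latter implies that $|\nu_1|, |\nu_2|$ are concentrated on $\{x,y\}$, and combining this with $\nu_i(\Omega) = 0$ writes each $\nu_i$ as $a_i(\delta_x-\delta_y)$ with $|a_i|=1/\beta$; matching coefficients then forces $a_1 = a_2 = 1/\beta$.

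For the converse, I would let $\nu$ be extremal and proceed in two steps. First, if $|\nu|(\Omega) < 2/\beta$, then for any balanced $\rho\neq 0$ and sufficiently small $\varepsilon>0$ the measures $\nu \pm \varepsilon \rho$ both lie in the set, and $\nu = \tfrac12(\nu+\varepsilon\rho) + \tfrac12(\nu-\varepsilon\rho)$ is a nontrivial decomposition; hence $|\nu|(\Omega) = 2/\beta$ and $\nu^+(\Omega)=\nu^-(\Omega)=1/\beta$. Second, suppose $\supp \nu^+$ contains more than one point, so that we can split $\nu^+ = \nu^+\mres E + \nu^+\mres F$ with $a:=\nu^+(E)>0$ and $b:=\nu^+(F)>0$. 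The key trick is to split $\nu^-$ proportionally: setting $\mu_E := \beta a\,\nu^-$ and $\mu_F := \beta b\,\nu^-$ yields $\mu_E + \mu_F = \nu^-$, and I would define
\[
\nu_1 := \frac{1}{\beta a}\bigl(\nu^+\mres E - \mu_E\bigr), \qquad \nu_2 := \frac{1}{\beta b}\bigl(\nu^+\mres F - \mu_F\bigr).
\]
Since $\nu^+$ and $\nu^-$ are mutually singular, $\nu^+\mres E \perp \mu_E$, so a direct computation gives $\nu_1, \nu_2$ balanced with total variation exactly $2/\beta$, hence both in the set. Furthermore $\nu = \beta a\,\nu_1 + \beta b\,\nu_2$ with $\beta a + \beta b = 1$, and this decomposition is nontrivial since $\nu_1^+ = \tfrac{1}{\beta a}\nu^+\mres E$ is supported in $E$ while $\nu^+$ has positive mass on $F$. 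This contradicts extremality, so $\supp\nu^+$ is a single point; a symmetric argument for $\nu^-$, combined with the total-variation constraint, concludes that $\nu$ is a dipole $\frac{1}{\beta}(\delta_x-\delta_y)$.

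The main obstacle is the bookkeeping in the second step: verifying that the proportional splitting of $\nu^-$ produces two measures both in the set and that their combination is genuinely nontrivial. Everything else reduces to elementary consequences of the Hahn decomposition and the total-variation triangle inequality.
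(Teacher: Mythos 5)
Your proof is correct. The converse direction (only dipoles are extremal) is essentially the paper's own argument: splitting $\nu^+$ over disjoint sets $E$ and $F$ and pairing each piece with a proportionally rescaled copy of $\nu^-$ is, after renormalization, exactly the paper's convex combination $\nu^+ = \lambda\,\nu^+\mres E/\nu^+(E) + (1-\lambda)\,\nu^+\mres F/\nu^+(F)$ together with $\nu^- = \lambda \nu^- + (1-\lambda)\nu^-$; your explicit preliminary step showing that an extremal point must saturate $|\nu|(\Omega)=2/\beta$ (via $\nu = \tfrac12(\nu+\e\rho)+\tfrac12(\nu-\e\rho)$) is a detail the paper leaves implicit, since its normalization $\nu^+(E)+\nu^+(F)=1$ presupposes it, so spelling it out is a small but genuine improvement in completeness. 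The forward direction is where you diverge: the paper re-runs the $\eta_A,\eta_B$ cancellation bookkeeping from Proposition \ref{prop:dipolesareext} and concludes by noting that any nonzero cancellation forces $|\nu_1|(\Omega)>2$ or $|\nu_2|(\Omega)>2$, whereas you exploit directly that the dipole saturates the mass bound: the setwise triangle inequality $|\mu_0|\leq \lambda|\nu_1|+(1-\lambda)|\nu_2|$ between finite nonnegative measures of equal total mass must be an equality of measures, which confines $|\nu_1|$ and $|\nu_2|$ to $\{x,y\}$ and reduces the problem to matching two scalar coefficients of modulus $1/\beta$. Your route is more elementary and self-contained (no appeal to the earlier proposition and its Hahn-decomposition machinery), at the mild cost of not exhibiting the cancellation structure that the paper reuses in the $W_p$ setting; both arguments are valid.
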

\begin{proof}
Assume without loss of generality that $\beta=1$. Arguing as in Proposition \ref{prop:dipolesareext}, assuming that $\delta_x - \delta_y = \lambda \nu_1 + (1-\lambda) \nu_2$ we can arrive to 
\begin{equation}\begin{aligned}
	\nu_1^+ = c_1^x\delta_x + \frac{1}{\lambda}\eta_A, &\quad \nu_1^- = c_1^y\delta_y + \frac{1}{\lambda}\eta_B,\\
	\nu_2^+ = c_2^x\delta_x + \frac{1}{1-\lambda}\eta_B, &\quad \nu_2^- = c_2^y\delta_y + \frac{1}{1-\lambda}\eta_A,
\end{aligned}\end{equation}
with $\lambda c_1^x + (1-\lambda) c_2^x=1$ and $\lambda  c_1^y + (1-\lambda) c_2^y=1$, $\eta_A \perp \eta_B$, $x\notin \supp \eta_A$ and $y\notin \supp \eta_B$. But if $|\eta_A|(\Omega)>0$ or $|\eta_B|(\Omega)>0$, then necessarily $|\nu_1|(\Omega)>2$ or $|\nu_2|(\Omega)>2$, so that $\delta_x -\delta_y$ is extremal in \eqref{eq:balancedtvball}.

For the converse, let us assume either $\nu^+$ or $\nu^-$ is supported in more than one point, say $\nu^+$ without loss of generality. Then as in Proposition \ref{prop:onlydipolesext} there would be sets $E,F$ for which
\begin{equation}
	\nu^+ = \nu^+ \mres E + \nu^+ \mres F \,\text{ and }\, \nu^+(E) + \nu^+(F) = 1,
\end{equation}
which can be rephrased as
\begin{equation}\label{eq:posconv}
	\nu^+ = \lambda \frac{\nu^+ \mres E}{\nu^+(E)} + (1-\lambda) \frac{\nu^+ \mres F}{\nu^+(F)} \,\text{ for }\, \lambda := \nu^+(E),
\end{equation}
which combined with $\nu^- = \lambda \nu^- + (1-\lambda) \nu^-$ gives us a nontrivial decomposition of $\nu$ within \eqref{eq:balancedtvball}.
\end{proof}

\begin{lemma}\label{lem:gaugeextremals}
Let $X$ be a Banach space, $f$ a convex positively one-homogeneous functional such that $f(u) = 0$ if and only if $u=0$, and $e \in X$. Then $e \in \Ext\{ w \mid f(w) \leq f(e) \}$ is equivalent to
\begin{equation}\label{eq:gaugestrictineq}\begin{gathered}
	f(e) < \lambda f(u) + (1-\lambda) f(v)\\ \text{whenever }\, e = \lambda u + (1-\lambda) v \, \text{ for }\, \lambda \in (0,1) \,\text{ and }\, u, v\notin \R^+ e \,\text{ with }\, u \neq v.
\end{gathered}\end{equation}
\end{lemma}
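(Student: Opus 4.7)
My plan is to prove both implications of the equivalence, dispatching first the trivial case $e = 0$ (where the sublevel set $S := \{w : f(w) \leq 0\}$ reduces to $\{0\}$ by the assumption $f^{-1}(0) = \{0\}$, so $e$ is vacuously extremal and the right-hand condition holds vacuously). Hence I will assume $e \neq 0$, so that $f(e) > 0$.

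For the forward direction, I will argue by contradiction: assume $e$ is extremal, take any decomposition $e = \lambda u + (1-\lambda) v$ satisfying the admissibility conditions of \eqref{eq:gaugestrictineq}, and suppose the strict inequality fails. Convexity then forces equality $f(e) = \lambda f(u) + (1-\lambda) f(v)$. If both $f(u), f(v) \leq f(e)$, the equality pins $f(u) = f(v) = f(e)$, so $u, v$ lie in $S$, and extremality immediately forces $u = v = e$, contradicting $u \neq v$. The essential case is when, say, $f(u) > f(e)$ (whence $f(v) < f(e)$). Here I will exploit positive one-homogeneity to rescale $u' := (f(e)/f(u))\,u$ and $v' := (f(e)/f(v))\,v$, both of which land in $S$, and verify via a direct calculation using the equality that $e = \lambda' u' + (1-\lambda') v'$ for some $\lambda' \in (0,1)$. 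The point is then to observe that $u' = e$ would force $u \in \R^+ e$ (excluded by assumption), and similarly $v' \neq e$, so this is a nontrivial convex decomposition of $e$ within $S$, contradicting extremality.

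For the converse, I will start with a generic decomposition $e = \lambda u + (1-\lambda) v$ with $\lambda \in (0,1)$ and $u, v \in S$, assume $u \neq v$ for contradiction, and aim to apply the strict-inequality hypothesis. The key preliminary step is to rule out $u \in \R^+ e$: writing $u = te$ with $t \geq 0$, the bound $f(u) \leq f(e)$ gives $t \leq 1$; back-substituting into the decomposition then forces $v = \frac{1-\lambda t}{1-\lambda}\,e$ with coefficient $\geq 1$, and the sublevel bound on $v$ collapses everything to $t = 1$ and $u = v = e$, contradicting $u \neq v$. A symmetric argument excludes $v \in \R^+ e$. With $u, v \notin \R^+ e$ and $u \neq v$ now established, the hypothesis supplies $f(e) < \lambda f(u) + (1-\lambda) f(v) \leq f(e)$, the desired contradiction.

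I expect the main obstacle to be the rescaling step in the forward direction: one must recognize that one-homogeneity permits shrinking $u$ and $v$ down to the common level $f(e)$ while preserving a convex-combination expression for $e$, and to see that the exclusion $u, v \notin \R^+ e$ is precisely the condition that guarantees the rescaled decomposition is nontrivial (i.e.\ $u', v' \neq e$). Everything else is bookkeeping with convexity and homogeneity.
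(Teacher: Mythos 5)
Your proof is correct and follows essentially the same route as the paper: the key step in both is to use positive one-homogeneity to rescale $u$ and $v$ along their rays onto a common level set (you normalize to level $f(e)$ inside a contradiction argument, the paper to the common level forced by keeping $e$ a convex combination), where the exclusion $u,v\notin\R^+e$ is exactly what makes the rescaled decomposition nontrivial so that extremality applies. The only caveat is your $e=0$ aside: the claim $\{w \mid f(w)\le 0\}=\{0\}$ requires $f\ge 0$, which does not follow from the stated hypotheses alone (a nonzero linear functional on $X=\R$ satisfies them while taking negative values), but this degenerate case is irrelevant to the paper's applications and is not treated in its proof either.
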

\begin{proof}
If condition \eqref{eq:gaugestrictineq} holds then it does so also in the particular case when $f(u)=f(v)$. Therefore, in that case it is not possible to have $u \neq v$ and $\lambda \in (0,1)$ with $f(u)=f(v)=f(e)$ and $e = \lambda u + (1-\lambda) v$, that is, $e$ must belong to $\Ext\{ w \mid f(w) \leq f(e) \}$. 

To prove the converse, let us assume that $e \in \Ext\{ w \mid f(w) \leq f(e) \}$ and $e = \lambda u + (1-\lambda) v$ for $\lambda \in (0,1)$, $u \neq v$ and $u,v\notin \R^+ e$. For this, we distinguish two cases.

The first case is when $f(u)=f(v)$. In this case, by convexity either $f(e)<f(u)=f(v)$ in which case the inequality of \eqref{eq:gaugestrictineq} follows immediately, or 
$f(e)=f(u)=f(v)$ which is not possible since it would lead to a contradiction with $e \in \Ext\{ w \mid f(w) \leq f(e) \}$.

The second case is when $f(u) \neq f(v)$, which we can try to reduce to the first case by rescaling $u$ and $v$ to $r_u u$ and $r_v v$ respectively, for some positive factors $r_u, r_v$. Note that $f(u), f(v) \neq 0$, since otherwise $u \in \R^+ e$ or $v \in \R^+ e$. In this case, $f(r_u u) = f(r_v v)$ is equivalent to the condition
\begin{equation}\label{eq:ratiocond}
	\frac{r_u}{r_v} = \frac{f(v)}{f(u)}.
\end{equation}
Moreover, we would like to express $e$ as a convex combination
\begin{equation}\label{eq:recover-e}
	\mu r_u u + (1-\mu) r_v v = e = \lambda u + (1- \lambda) v.
\end{equation}
Equating the coefficients in $u$ and $v$ in the left and right hand sides of \eqref{eq:recover-e} then brings us to the requirements
\begin{gather}\label{eq:mudef}
    \mu = \frac{\lambda}{r_u}, \, \text{ and }\, 1 - \mu = \frac{1-\lambda}{r_v}.
\end{gather}
From these and \eqref{eq:ratiocond}, we see that we must have
\begin{equation}
	\frac{\mu}{1 - \mu} = \frac{\lambda}{1-\lambda}\frac{r_v}{r_u} = \frac{\lambda}{1-\lambda}\frac{f(u)}{f(v)} \in \R^+,
\end{equation}
but this determines a single solution $\mu \in (0,1)$ from $u,v$ and $\lambda$ alone. With it, we can go back to \eqref{eq:mudef} to solve
\begin{equation}
	r_u = \frac{\lambda}{\mu} \,\text{ and }\, r_v = \frac{1- \lambda}{1 - \mu}.
\end{equation}
This finally brings us back to the first case, and we obtain
\begin{equation}\begin{aligned}
	f(e) &< \mu f(r_u u) + (1-\mu) f(r_v v) \\
	     &= \mu r_u f(u) + (1-\mu) r_v f(v) \\
	     &= \lambda f(u) + (1-\lambda) f(v),
\end{aligned}\end{equation}
which finishes the proof of \eqref{eq:gaugestrictineq}.
\end{proof}

The above lemma immediately implies:

\begin{lemma}\label{lem:sumextremals}
Let $X$ be a Banach space, and $f,g$ be convex positively one-homogeneous functionals for which $f(u)=g(u)=0$ only for $u=0$, and for which there exist index sets $\Phi, \Theta$, points $u_\varphi, u_\theta \in X$ for $\varphi \in \Phi$ and $\theta \in \Theta$ such that:
\begin{equation}
    \Ext \{u \, \mid \, f(u) \leq 1 \} = \left\{ \frac{u_\varphi}{f(u_\varphi)} \,\middle\vert\, \varphi \in \Phi\right\}\ \text{ and }\ \Ext \{u \, \mid \, g(u) \leq 1 \} = \left\{ \frac{u_\theta}{g(u_\theta)} \,\middle\vert\, \theta \in \Theta\right\}.
\end{equation}
Then we have
\begin{equation}
     \left\{ \frac{1}{f(u_\varphi) + g(u_\varphi)} u_\varphi \,\middle\vert\, \varphi \in \Phi\right\} \cup \left\{ \frac{1}{f(u_\theta) + g(u_\theta)} u_\theta \,\middle\vert\, \theta \in \Theta\right\} \subset \Ext \{u \, \mid \, f(u)+g(u) \leq 1 \}.
\end{equation}
\end{lemma}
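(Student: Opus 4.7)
The approach is to invoke Lemma \ref{lem:gaugeextremals} twice: first to reformulate extremality of $e := u_\varphi/(f(u_\varphi)+g(u_\varphi))$ in the unit ball of $f+g$ as a strict convexity inequality, and second to import the corresponding strict inequality for $f$ alone from the hypothesized extremality of $u_\varphi/f(u_\varphi)$. Since $f+g$ is positively one-homogeneous and vanishes only at $0$, Lemma \ref{lem:gaugeextremals} applies to it; and because $(f+g)(e)=1$, the set $\{w \mid (f+g)(w) \leq 1\}$ agrees with $\{w \mid (f+g)(w) \leq (f+g)(e)\}$. So it suffices to show
\begin{equation}
(f+g)(e) < \lambda (f+g)(u) + (1-\lambda)(f+g)(v)
\end{equation}
whenever $e = \lambda u + (1-\lambda) v$ with $\lambda \in (0,1)$, $u \neq v$, and $u, v \notin \R^+ e$.

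Given such a decomposition, the next step is to rescale by the positive constant $c:=f(u_\varphi)+g(u_\varphi)$ to obtain $u_\varphi = \lambda (cu) + (1-\lambda)(cv)$. Because $\R^+ e = \R^+ u_\varphi$, the conditions $cu \neq cv$ and $cu, cv \notin \R^+ u_\varphi$ are inherited from $u, v$. The hypothesis $u_\varphi/f(u_\varphi) \in \Ext\{w \mid f(w)\leq 1\}$ combined with the positive homogeneity of $f$ gives $u_\varphi \in \Ext\{w \mid f(w)\leq f(u_\varphi)\}$, and applying Lemma \ref{lem:gaugeextremals} this time with the functional $f$ yields
\begin{equation}
f(u_\varphi) < \lambda f(cu) + (1-\lambda) f(cv) = c\bigl[\lambda f(u) + (1-\lambda) f(v)\bigr].
\end{equation}
Dividing by $c$ produces $f(e) < \lambda f(u) + (1-\lambda) f(v)$, and convexity of $g$ applied to $e = \lambda u + (1-\lambda) v$ gives $g(e) \leq \lambda g(u) + (1-\lambda) g(v)$. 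Summing these two estimates delivers the desired strict inequality for $f+g$, which (by Lemma \ref{lem:gaugeextremals} in the reverse direction) yields the first inclusion. The second family $\{u_\theta/(f(u_\theta)+g(u_\theta))\}$ is handled identically, with the roles of $f$ and $g$ swapped: then the strict inequality is extracted from $g$ while convexity is used only for $f$.

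I do not expect a substantive obstacle: the argument is essentially a bookkeeping exercise around the rescaling factor $c$. The only subtlety is recognizing that the extremality of $u_\varphi/f(u_\varphi)$ in the unit ball of $f$ must first be translated to extremality of $u_\varphi$ in $\{w \mid f(w)\leq f(u_\varphi)\}$ before invoking Lemma \ref{lem:gaugeextremals} a second time; this translation is immediate from positive homogeneity of $f$, but needs to be stated explicitly to match the hypotheses of that lemma.
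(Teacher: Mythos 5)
Your proof is correct and follows essentially the same route as the paper: both apply Lemma \ref{lem:gaugeextremals} and observe that strictness of the inequality \eqref{eq:gaugestrictineq} for $f$ (or $g$) alone, combined with plain convexity of the other functional, forces strictness for the sum $f+g$. The paper compresses this into one sentence, while you spell out the rescaling by $c=f(u_\varphi)+g(u_\varphi)$ and the passage from extremality of $u_\varphi/f(u_\varphi)$ in the unit ball to extremality of $u_\varphi$ in $\{w \mid f(w)\leq f(u_\varphi)\}$, both of which are correct bookkeeping steps.
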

\begin{proof}
We apply Lemma \ref{lem:gaugeextremals}, noticing that as soon as the inequality in \eqref{eq:gaugestrictineq} is strict for either $f$ or $g$, it is strict for the sum $f+g$ as well. 
\end{proof}

In fact, using Lemma \ref{lem:gaugeextremals} we also (quite surprisingly) get the case $p=1$ in the following result:

\begin{thm}\label{thm:masspenalizedext}
Let $p \in (0,1]$ and $\beta >0$. Then the extremal points of the set
\begin{equation}
	\left\{ \mu \in \M(\Omega)\, \middle \vert\, \mu(\Omega)=0, \  W_p(\mu^+, \mu^-)  + \frac{\beta}{2}|\mu|(\Omega) \leq 1 \right\}
\end{equation}
are all rescaled dipoles $
	\nu = \mathcal{D}_\beta(x,y)$ for  $(x,y) \in \Omega \times \Omega$ with $x \neq y$.
\end{thm}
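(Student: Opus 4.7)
The plan is to prove both inclusions between $\Ext\{\mu \in \M(\Omega) : \mu(\Omega) = 0,\ W_p(\mu^+,\mu^-) + \tfrac{\beta}{2}|\mu|(\Omega) \leq 1\}$ and the collection of rescaled dipoles $\mathcal{D}_\beta(x,y)$. The ``only if'' direction is immediate from Proposition \ref{prop:onlydipolesext}, which is stated for precisely the same set and already asserts that every extremal takes the form $\mathcal{D}_\beta(x,y)$.

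For the reverse inclusion, I would invoke Lemma \ref{lem:sumextremals} on the Banach space of balanced finite Radon measures, with $f(\mu) := W_p(\mu^+,\mu^-)$ and $g(\mu) := \tfrac{\beta}{2}|\mu|(\Omega)$. Both are convex and positively one-homogeneous: $g$ trivially, and $f$ via the duality formula \eqref{eq:Wpduality}, which on balanced measures rewrites $f(\mu) = \sup_{\psi \in \Lip_1(\Omega,p)} \int \psi \dd \mu$ as a supremum of linear functionals. Both vanish only at zero: for $f$, the condition $W_p(\mu^+,\mu^-)=0$ forces $\mu^+ = \mu^-$, which combined with the mutual singularity $\mu^+ \perp \mu^-$ from the Hahn decomposition yields $\mu = 0$.

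By Lemma \ref{lem:balancedtvext}, the extremals of the unit ball of $g$ are exactly the dipoles $\tfrac{1}{\beta}(\delta_x - \delta_y)$. Setting $u_\theta := \delta_x - \delta_y$ indexed by $\theta = (x,y) \in \Omega \times \Omega$ with $x \neq y$, we have $g(u_\theta) = \beta$ and $f(u_\theta) = |x-y|^p$, and the conclusion of Lemma \ref{lem:sumextremals} yields that
\[
\frac{u_\theta}{f(u_\theta) + g(u_\theta)} \;=\; \frac{1}{\beta + |x-y|^p}\bigl(\delta_x - \delta_y\bigr) \;=\; \mathcal{D}_\beta(x,y)
\]
is extremal in $\{\mu \text{ balanced} : f(\mu) + g(\mu) \leq 1\}$, which is exactly the target set.

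The noteworthy aspect of this argument is that it is uniform in $p \in (0,1]$, covering the case $p=1$ as well. Indeed, the Example preceding Proposition \ref{prop:lackofextremals} shows that dipoles are \emph{not} extremal in the pure $W_1$-ball, so Proposition \ref{prop:dipolesareext} cannot be used through the $f$-leg of Lemma \ref{lem:sumextremals} in that range; but the total variation term $g$ is strict enough in the sense of Lemma \ref{lem:gaugeextremals} to restore extremality for the sum $f+g$. I foresee no genuine obstacle in the execution: the one point that must be matched carefully is that the parametrization of extremals from Lemma \ref{lem:balancedtvext} plugs directly into the hypotheses of Lemma \ref{lem:sumextremals}, after which the proof is a single application.
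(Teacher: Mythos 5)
Your proposal is correct and follows essentially the same route as the paper: Proposition \ref{prop:onlydipolesext} for the ``only dipoles'' direction, and Lemmas \ref{lem:balancedtvext} and \ref{lem:sumextremals} (via the total-variation leg, so that the failure of extremality in the pure $W_1$-ball is irrelevant) for the converse. Your added verification that $W_p(\mu^+,\mu^-)$ is a one-homogeneous gauge vanishing only at $0$ on balanced measures is a detail the paper leaves implicit, but the argument is the same.
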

\begin{proof}
Using Lemmas \ref{lem:balancedtvext} and \ref{lem:sumextremals} we get that all adequately normalized dipoles must be extremal. To see that there can be no other extremal points, we apply Proposition \ref{prop:onlydipolesext}.
\end{proof}

\begin{rem}
Notice that Proposition \ref{prop:dipolesareext}, which holds only for $p<1$ and is in fact false for $p=1$ by Proposition \ref{prop:lackofextremals}, does not play a role in the argument above.
\end{rem}

\subsection{Proof of Theorem \ref{thm:KRextremals}}
Now we aim to find the extremal points of the $\KR^\ab_p$ ball $\big\{ \mu\, \big \vert\, \|\mu\|_{\KR^\ab_p} \leq 1 \big\}$. This norm is expressed in \eqref{eq:def-KRbp} as an infimal convolution of positively one-homogeneous functionals, that is
\begin{equation}
	\|\mu\|_{\KR^\ab_p} = \inf_\nu\, \W^\beta_p(\nu) + \alpha|\mu - \nu|(\Omega),\, \text{ for }\, \W^\beta_p(\nu):=W_p(\nu^+, \nu^-) + \frac{\beta}{2}|\nu|(\Omega) + \textbf{1}_{\nu(\Omega) = 0}.
\end{equation}
In this situation, we have by \cite[Lem.~3.4]{IglWal22}, Theorem \ref{thm:masspenalizedext} and the characterization of extremal points for the total variation of measures that these can only be Dirac masses or rescaled dipoles. Our task is then to find out which of these are actually extremal to arrive at the characterization \eqref{eq:KRextremals}, in which the condition on the dipoles ensures that their transportation cost is strictly lower than their total variation. We begin by checking that the Dirac masses are indeed extremal, for which our proof follows the structure of that of \cite[Prop.~3.8]{IglWal22}.
\begin{prop}\label{prop:deltasextkr}All Dirac masses $\pm \delta_x / \alpha$ for $x \in \Omega$ are extremal points of $\big\{ \mu \, \big \vert \|\mu\|_{\KR^\ab_p} \leq 1 \big\}$.
\end{prop}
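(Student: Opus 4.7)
My plan is to combine a uniqueness result for the optimal decomposition in \eqref{eq:def-KRbp} at $\mu = \delta_x/\alpha$ with the standard extremality of Dirac masses in the total variation unit ball.

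The centerpiece, which I expect to be the main obstacle, is verifying that $\|\delta_x/\alpha\|_{\KR^\ab_p} = 1$ (immediate by choosing $\nu = 0$ in \eqref{eq:def-KRbp}) and that $\nu = 0$ is the \emph{unique} minimizer. Given any balanced $\nu \neq 0$, I would split $\nu^+ = c \delta_x + \tilde\nu^+$ with $c := \nu^+(\{x\}) \in [0, m]$, $m := \nu^+(\Omega) = \nu^-(\Omega) > 0$, and $\tilde\nu^+(\{x\}) = 0$. Since $\nu^+ \perp \nu^-$ also $\nu^-(\{x\}) = 0$, and a direct Jordan decomposition gives
\[
|\delta_x/\alpha - \nu|(\Omega) =
\begin{cases}
1/\alpha + 2(m - c) & \text{if } c \leq 1/\alpha, \\
2m - 1/\alpha & \text{if } c > 1/\alpha.
\end{cases}
\]
Substituting into $\mathcal{W}^\beta_p(\nu) + \alpha |\delta_x/\alpha - \nu|(\Omega)$: in the first case this equals $1 + W_p(\nu^+, \nu^-) + \beta m + 2\alpha(m - c) > 1$, using $m \geq c$, $\beta > 0$ and $m > 0$; in the second, it equals $-1 + W_p(\nu^+, \nu^-) + (\beta + 2\alpha) m$, which exceeds $1$ since $m > 1/\alpha$ gives $(\beta + 2\alpha) m > 2 + \beta/\alpha$.

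With uniqueness in hand, I would assume a nontrivial convex combination $\delta_x/\alpha = \lambda \mu_1 + (1-\lambda)\mu_2$ with $\lambda \in (0,1)$ and $\|\mu_i\|_{\KR^\ab_p} \leq 1$, so by convexity of the norm $\|\mu_i\|_{\KR^\ab_p} = 1$. Lemma~\ref{lem:krexact} supplies optimal decompositions $\mu_i = \nu_i + \rho_i$ with $\mathcal{W}^\beta_p(\nu_i) + \alpha |\rho_i|(\Omega) = 1$, and the averages $\nu := \lambda \nu_1 + (1-\lambda)\nu_2$ (balanced) and $\rho := \lambda \rho_1 + (1-\lambda)\rho_2$ satisfy $\nu + \rho = \delta_x/\alpha$ and, by separate convexity of $\mathcal{W}^\beta_p$ and of $|\cdot|(\Omega)$,
\[
\mathcal{W}^\beta_p(\nu) + \alpha |\rho|(\Omega) \leq 1 = \|\delta_x/\alpha\|_{\KR^\ab_p},
\]
making $(\nu, \rho)$ itself an optimal decomposition. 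Uniqueness forces $\nu = 0$, and tracing equality back through both convexity inequalities yields $\mathcal{W}^\beta_p(\nu_1) = \mathcal{W}^\beta_p(\nu_2) = 0$; since $\beta > 0$, the mass term $(\beta/2)|\nu_i|(\Omega)$ in $\mathcal{W}^\beta_p$ must vanish, so $\nu_i = 0$ and $\mu_i = \rho_i$ with $\alpha|\mu_i|(\Omega) = 1$.

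The proof then concludes by the standard extremality of $\delta_x/\alpha$ in the total variation ball $\{\mu : |\mu|(\Omega) \leq 1/\alpha\}$: applied to $\delta_x/\alpha = \lambda \mu_1 + (1-\lambda) \mu_2$ with $|\mu_i|(\Omega) = 1/\alpha$, this yields $\mu_1 = \mu_2 = \delta_x/\alpha$. The case of $-\delta_x/\alpha$ is entirely symmetric.
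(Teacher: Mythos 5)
Your proof is correct and follows essentially the same route as the paper's: show that $\nu=0$ is the unique minimizer of the inner infimal-convolution problem at $\delta_x/\alpha$, average the optimal inner decompositions of $\mu_1,\mu_2$ to force $\nu_1=\nu_2=0$ via convexity and that uniqueness, and then invoke extremality of Dirac masses in the total variation ball. The only real difference is in the uniqueness sub-step, where the paper uses the short chain $1=\alpha\big|(\delta_x/\alpha)(\Omega)-\bar\nu(\Omega)\big|\leq\alpha\big|\delta_x/\alpha-\bar\nu\big|(\Omega)\leq 1$ exploiting only that $\bar\nu$ is balanced, while you perform an explicit case analysis on $c=\nu^+(\{x\})$; this also works (and your displayed formula for $|\delta_x/\alpha-\nu|(\Omega)$ remains valid even when $c=0$ and $\nu^-$ charges $x$, although in that case the assertion $\nu^-(\{x\})=0$ does not actually follow from $\nu^+\perp\nu^-$).
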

\begin{proof}
Without loss of generality, let us consider $\mu=\delta_x/\alpha$. Now, we claim that the problem
\begin{equation}\label{eq:innerminprobdelta}
    \inf_\nu\, \W^\beta_p(\nu) + \alpha|\delta_x - \nu|(\Omega)	
\end{equation}
has $0$ as its unique minimizer. To see this, let~$\bar{\nu}$ denote an arbitrary minimizer of~\eqref{eq:innerminprobdelta}. Since~$\W^\beta_p(\bar\nu)$ is finite, there holds~$\bar\nu(\Omega)=0$ and, consequently,
\begin{align*}
    1=\alpha\big|(\delta_x/\alpha) (\Omega)\big|&=\alpha\big|(\delta_x/\alpha) (\Omega)-\bar\nu(\Omega)\big| \leq \alpha\big|\delta_x/\alpha-\bar\nu\big|(\Omega) \\&\leq \inf_\nu\,\W^\beta_p(\nu) + \alpha\big|\delta_x/\alpha - \nu\big|(\Omega) \leq \alpha\big|\delta_x/\alpha\big|(\Omega)=1
\end{align*}
where the final inequality follows from
\begin{equation*}
    \inf_\nu\, \W^\beta_p(\nu) + \alpha\big|\delta_x/\alpha - \nu\big|(\Omega) \leq \W^\beta_p(0) + \alpha\big|\delta_x/\alpha - 0\big|(\Omega)=\alpha|\delta_x/\alpha|(\Omega).
\end{equation*}
These observations imply $\|\delta_x/\alpha\|_{\KR^\ab_p} = 1$ as well as~$\W^\beta_p(\bar\nu)=0$. Since the latter is only satisfied for~$\bar\nu=0$, the claimed uniqueness of~$\bar{\nu}=0$ follows.

%just notice that since $\W^\beta_p(\nu)$ is only finite when %$\nu(\Omega) = 0$, so since $\delta_x$ is nonnnegative and %$|\delta_x|(\Omega)=1$, for any minimizer $\nu_0$ we must have %cost at least $1$ in \eqref{eq:innerminprobdelta}. But %$\W^\beta_p(\nu_0)$ vanishes only for $\nu_0 = 0$, so this is the %only minimizer and, in particular, $\|\delta_x\|_{\KR^\ab_p} = %1$.

Now, assume that we could express $\delta_x/\alpha$ as a convex combination $\delta_x/\alpha = \lambda \mu_1 + (1-\lambda)\mu_2$ with $\|\mu_i\|_{\KR^\ab_p} \leq 1$ and $\lambda \in (0,1)$. Now, denote by $\nu_i$ minimizers for the inner problem for $\|\mu_i\|_{\KR^\ab_p}$, so that
\begin{equation}\label{eq:innerminimizers}
    \alpha|\mu_i - \nu_i|(\Omega) \leq \alpha|\mu_i - \nu_i|(\Omega) + \W^\beta_p(\nu_i) = \|\mu_i\|_{\KR^\ab_p} = 1.
\end{equation}
Next, we take their convex combination $\nu_\lambda := \lambda \nu_1 + (1-\lambda)\nu_2$ and use it in \eqref{eq:innerminprobdelta}, which using convexity and \eqref{eq:innerminimizers} gives us
\begin{equation}
    1 = \|\delta_x/\alpha\|_{\KR^\ab_p} \leq \alpha|\delta_x/\alpha - \nu_\lambda|(\Omega) + \W^\beta_p(\nu_\lambda) \leq \lambda \|\mu_1\|_{\KR^\ab_p} + (1-\lambda)\|\mu_2\|_{\KR^\ab_p} = 1.
\end{equation}
But this means that $\nu_\lambda$ is a minimizer for \eqref{eq:innerminprobdelta}, so we must have $\lambda \nu_1 + (1-\lambda)\nu_2 = 0$, which implies
\begin{equation}
    \lambda(\mu_1 - \nu_1) + (1-\lambda)(\mu_2 - \nu_2) = \lambda \mu_1 + (1-\lambda)\mu_2 = \delta_x/\alpha.
\end{equation}
Remembering \eqref{eq:innerminimizers} we can now use the characterization of extremals for the total variation of measures, so that this convex combination must be trivial and $\mu_1 - \nu_1 = \mu_2 - \nu_2$. This means that
\begin{equation}
    \mu_i - \nu_i = \delta_x/\alpha, \text{ so }\alpha|\mu_i - \nu_i|(\Omega) = 1\text{ and }\W^\beta_p(\nu_i)=0 ,
\end{equation}
which in turn implies $\nu_i = 0$ and $\mu_i = \delta_x/\alpha$.
\end{proof}

\begin{prop}\label{prop:dipolesextkr}All rescaled dipoles with distance less than $2\alpha-\beta$, that is, elements of
\begin{equation}
    \left\{ \frac{1}{\beta +|x-y|^p}\big(\delta_x-\delta_y\big) \, \middle \vert\, x,y \in \Omega,\ 0 < |x-y|^p < 2\alpha-\beta \right\}
\end{equation}
are extremal points of $\big\{ \mu\, \big \vert\, \|\mu\|_{\KR^\ab_p} \leq 1 \big\}$.
\end{prop}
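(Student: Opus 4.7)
The plan is to adapt the proof of Proposition~\ref{prop:deltasextkr}, replacing the role of the unique inner minimizer $\nu=0$ by $\nu=\mu$. Writing $\mu = \mathcal{D}_\beta(x,y) = c(\delta_x - \delta_y)$ with $c = 1/(\beta+|x-y|^p)$, a direct computation gives $\W^\beta_p(\mu) = c|x-y|^p + c\beta = 1$, so $\nu=\mu$ is admissible in the inner problem with cost $1$, yielding $\|\mu\|_{\KR^\ab_p} \leq 1$. The crux is then to prove that $\|\mu\|_{\KR^\ab_p} = 1$ \emph{and} that $\nu=\mu$ is the unique inner minimizer.

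To do this, I will construct a continuous dual test function $\phi = \phi_1 + \phi_2$ with $\phi_1 \in \Lip_1(\Omega,p)$, $\|\phi_2\|_\infty \leq \beta/2$, $\|\phi\|_\infty < \alpha$, and $\phi(x) - \phi(y) = |x-y|^p + \beta$. Explicitly, I would take
\[
\phi_1(z) := \min\bigl(|z-y|^p - |x-y|^p/2,\ |x-y|^p/2\bigr), \quad \phi_2(z) := \tfrac{\beta}{2}\cdot\frac{|z-y|^p - |z-x|^p}{|x-y|^p}.
\]
The Lip-$1$ property of $\phi_1$ and the sup-norm bound on $\phi_2$ both follow from the subadditivity of $t \mapsto t^p$, and evaluation at $x,y$ produces the required jump while $\|\phi\|_\infty \leq (|x-y|^p + \beta)/2$, which is \emph{strictly} less than $\alpha$ precisely by the hypothesis $|x-y|^p < 2\alpha - \beta$. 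For any balanced $\nu$, splitting $\int\phi\,d\mu = \int\phi\,d(\mu-\nu) + \int\phi_1\,d\nu + \int\phi_2\,d\nu$ and bounding the three pieces by $\alpha|\mu-\nu|(\Omega)$, $W_p(\nu^+,\nu^-)$ (via~\eqref{eq:Wpduality}) and $(\beta/2)|\nu|(\Omega)$ respectively yields $1 = \int\phi\,d\mu \leq \W^\beta_p(\nu) + \alpha|\mu-\nu|(\Omega)$. In the equality case the strict bound $\|\phi\|_\infty < \alpha$ forces $|\mu-\nu|(\Omega) = 0$, so $\nu = \mu$.

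With this uniqueness in hand, the extremality argument proceeds as in Proposition~\ref{prop:deltasextkr}. Given a convex decomposition $\mu = \lambda \mu_1 + (1-\lambda)\mu_2$ with $\|\mu_i\|_{\KR^\ab_p} \leq 1$, take inner minimizers $\nu_i$ (existing by Lemma~\ref{lem:krexact}) and note that $\nu_\lambda := \lambda\nu_1 + (1-\lambda)\nu_2$ is an inner minimizer for $\mu$ by convexity of $\W^\beta_p$ and $|\cdot|(\Omega)$. Uniqueness then yields $\nu_\lambda = \mu$ and $\|\mu_i\|_{\KR^\ab_p} = 1$, and equality in the convex combinations forces $\mu_i = \nu_i$ and $\W^\beta_p(\nu_i) = 1$. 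Since $\mathcal{D}_\beta(x,y)$ is extremal in $\{\nu : \nu(\Omega)=0,\ \W^\beta_p(\nu) \leq 1\}$ by Theorem~\ref{thm:masspenalizedext}, the identity $\mu = \lambda\nu_1 + (1-\lambda)\nu_2$ must be trivial, giving $\mu_1 = \mu_2 = \mu$.

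The main obstacle is the dual construction and the sharp threshold: any admissible $\phi$ realizing the jump $|x-y|^p + \beta$ between $x$ and $y$ must satisfy $\|\phi\|_\infty \geq (|x-y|^p+\beta)/2$, so only the strict inequality $|x-y|^p < 2\alpha - \beta$ produces the strict bound $\|\phi\|_\infty < \alpha$ needed for uniqueness. At the boundary $|x-y|^p = 2\alpha - \beta$ extremality genuinely fails, since then $\mathcal{D}_\beta(x,y) = \tfrac{1}{2}(\delta_x/\alpha) + \tfrac{1}{2}(-\delta_y/\alpha)$ is a nontrivial convex combination of two Dirac extremals.
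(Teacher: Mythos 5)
Your proof is correct, and while the overall skeleton (reduce extremality to uniqueness of the inner minimizer, then run the convex-combination argument of Proposition~\ref{prop:deltasextkr}) matches the paper, the core step --- showing that $\nu=\mathcal{D}_\beta(x,y)$ is the \emph{unique} minimizer of $J_I(\nu)=\W^\beta_p(\nu)+\alpha|\mathcal{D}_\beta(x,y)-\nu|(\Omega)$ --- is handled by a genuinely different argument. The paper works on the primal side: it builds the contraction $\Xi$ of \eqref{eq:semiprojection} to force any minimizer to be supported on the segment $[x,y]$ (with a case analysis to rule out mass in the outer regions and off the segment), and then chains the triangle inequality \eqref{eq:KRtriangle} with the bound $W_p(\rho^+,\rho^-)\leq \tfrac{|x-y|^p}{2}|\rho|(\Omega)$ for measures on the segment to obtain the strict inequality $J_I(\mathcal{D}_\beta(x,y))<J_I(\nu)$. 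You instead exhibit an explicit dual certificate $\phi=\phi_1+\phi_2$ whose pieces are matched to the terms of the infimal convolution: $\phi_1\in\Lip_1(\Omega,p)$ pairs against $W_p$ via \eqref{eq:Wpduality}, $\|\phi_2\|_\infty\leq\beta/2$ pairs against the mass term, and $\|\phi\|_\infty\leq(|x-y|^p+\beta)/2<\alpha$ holds precisely under the hypothesis $|x-y|^p<2\alpha-\beta$, with the strict bound forcing $|\mathcal{D}_\beta(x,y)-\nu|(\Omega)=0$ at any minimizer. Your Lipschitz and sup-norm estimates are correct (both reduce to the reverse triangle inequality for the metric $|\cdot|^p$ with $p\leq 1$), and the closing argument --- $\nu_\lambda=\mathcal{D}_\beta(x,y)$ by uniqueness, $\W^\beta_p(\nu_i)=1$ by convexity so $\mu_i=\nu_i$, then Theorem~\ref{thm:masspenalizedext} to force $\nu_1=\nu_2$ --- goes through. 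What your route buys is brevity and transparency: it avoids the geometric case analysis entirely, the certificate is essentially the subgradient element predicted a posteriori by Theorem~\ref{thm:firstorder} and Lemma~\ref{lem:subdiffT} (one has $\Psi_\phi(x,y)=1$ and $|\phi|<\alpha$ everywhere), and it makes the sharpness of the threshold $2\alpha-\beta$ visible at a glance, as your final remark on the decomposition $\mathcal{D}_\beta(x,y)=\tfrac12(\delta_x/\alpha)+\tfrac12(-\delta_y/\alpha)$ at the boundary confirms.
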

\begin{proof}
We would like to follow the same strategy as in Proposition \ref{prop:deltasextkr}, which requires that the inner minimization problem
\begin{equation}\label{eq:innerminprobdipole}
    \inf_\nu \,J_I(\nu)\ \text{ for }\ J_I(\nu) := \W^\beta_p(\nu) + \alpha\left|\mathcal{D}_\beta(x,y) - \nu\right|(\Omega)
\end{equation}
has the dipole $\mathcal{D}_\beta(x,y) = (\delta_x - \delta_y)/(\beta+|x-y|^p)$ as unique minimizer when $p \leq 1$ and $|x-y|^p < 2\alpha-\beta$.

% First, we notice that for any measure $\nu$, we can consider its pushforward through the projection onto the segment between $x$ and $y$, that is
% \begin{gather}
%     (\pi_{[x,y]})_\# \nu, \text{ where } \pi_{[x,y]}(z) = \argmin\{ |z - \hat z| \, | \, \hat z \in [x,y]\}\\ \text{and }[x,y] = \{ \lambda x + (1-\lambda)y \, \mid \, \lambda \in [0,1]\},
% \end{gather}
% where we notice that the projection is uniquely define because $[x,y]$ is convex. For it we \red{maybe} have that $J\big((\pi_{[x,y]})_\# \nu\big) \leq J(\nu)$ in \eqref{eq:innerminprobdipole}, so we can assume without loss of generality that $\supp \nu \subset [x,y]$.

Without loss of generality we can assume that $x=0$. First, we aim to reduce the problem to be supported on the segment $[0,y] = \{ \lambda y \, \mid \, \lambda \in [0,1]\}$. To do this, we could think of pushing forward any candidate to be a minimizer $\nu$ through the projection onto $[0,y]$, which is a convex set. This is not enough however, since even though this cannot increase transport cost or total variation, it could be that the fidelity term $|\mathcal{D}_\beta(0,y) - \nu|$ increases by this transformation. The cause of this is the lack of injectivity of the projection, which could cause cancellations of mass on $\{0\}$ or $\{y\}$. To avoid this pitfall we can define a transformation $\Xi$ by
\begin{equation}\label{eq:semiprojection}
    z \mapsto \Xi(z) := \begin{cases}
             \frac12 z &\text{ if } z \cdot \frac{y}{|y|} \leq 0,\\
             \big(z \cdot \frac{y}{|y|}\big) \frac{y}{|y|} + \frac12 \big(z - \big( z \cdot \frac{y}{|y|}\big) \frac{y}{|y|} \big) &\text{ if } 0 < z \cdot \frac{y}{|y|} < |y|,\\
             y + \frac12 (z-y) &\text{ if } z \cdot \frac{y}{|y|} \geq |y|.
             \end{cases}
\end{equation}
We notice that $\Xi^{-1}(\{0\}) = \{0\}$ and $\Xi^{-1}(\{y\}) = \{y\}$, so for the pushforward $\Xi_\# \nu$ we must have
\begin{equation}
    \left|\mathcal{D}_\beta(0,y) - \Xi_\# \nu\right| = \left|\mathcal{D}_\beta(0,y) - \nu\right|,
\end{equation}
while $\W^\beta_p(\Xi_\# \nu) \leq \W^\beta_p(\nu)$.  To see this, notice that $\W_p((\Xi_\# \nu)^+, (\Xi_\# \nu)^-) \leq \W_p(\nu^+, \nu^-)$ because $\Xi$ is $1$-Lipschitz, the pushforward does not increase the total variation of a measure, and $(\Xi_\# \nu)(\Xi(\Omega))=\nu(\Omega)=0$. 
%while $\W^\beta_p(\Xi_\# \nu) \leq \W^\beta_p(\nu)$. To see this, notice %that $\Xi$ is $1$-Lipschitz (also across the different regions), that the %pushforward through it cannot increase total variation, and that we still %have $(\Xi_\# \nu)(\Xi(\Omega))=\nu(\Omega)=0$. 
Moreover, if the support of $\nu$ intersects the first or the third region in the definition \eqref{eq:semiprojection}, then $\W^\beta_p(\Xi_\# \nu) < \W^\beta_p(\nu)$. Indeed, an easy computation shows that $|\Xi(p_1) - \Xi(p_2)| < |p_1 - p_2|$ if $p_1$ or $p_2$ belong to the first or the third region in \eqref{eq:semiprojection}. Thus, if the support of $\nu$ intersects the first or the third region in \eqref{eq:semiprojection}, the strict inequality $\W_p(\Xi_\# \nu^+, \Xi_\# \nu^-) < \W_p(\nu^+, \nu^-)$ holds.
%this inequality would be strict. 
Further, we claim that if we had mass on the second region in \eqref{eq:semiprojection} but outside of the segment $[0,y]$, that is
\begin{equation}\label{eq:chargeslab}
    |\nu|\big( A \big) > 0 \ \text{ for }\ A:= \big\{z \in \Omega \,\big\vert\, 0 < z \cdot (y / |y|) < |y| \big\} \setminus [0,y]
\end{equation}
then also $J_I(\Xi_\# \nu) < J_I(\nu)$. To see this, first notice that we must have $\nu(\{0\})>0$ or $\nu(\{y\})<0$, since if $\nu(\{0\})\leq 0 \leq \nu(\{y\})$ we would have $J_I(0) < J_I(\nu)$, but $|y|^p < 2\alpha-\beta$ ensures that
\begin{equation}
  J_I(\nu) \leq J_I\left( \frac{1}{\beta+|y|^p}(\delta_0 - \delta_y) \right) < J_I(0),
\end{equation}
using the minimality on $\nu$.
Then, \eqref{eq:chargeslab} combined with $\nu(\{0\}) > 0$ or $\nu(\{y\}) < 0$ also implies that any optimal transportation plan $\gamma_{\opt}$ for $W_p(\nu^+, \nu^-)$ must satisfy
\begin{equation}\label{eq:diagonaltransport}
    \gamma_{\opt}\big( \{0\} \times A \big) >0\ \text{ or }\ \gamma_{\opt}\big( A \times \{y\}\big) >0.
\end{equation}
To see this, let us define
\begin{equation}
    \nu_A = \nu - (\pi_1)_\#\big[\gamma_{\opt}\mres \big( A \times A \big)\big] + (\pi_2)_\#\big[\gamma_{\opt}\mres\big( A \times A \big)\big]
\end{equation}
which, taking into account that $(\pi_1)_\# \gamma_{\opt} = \nu^+$, $(\pi_2)_\# \gamma_{\opt} = \nu^-$ and $\nu^+ \perp \nu^-$, remains balanced so that $\nu_A(\Omega)=0$. And if \eqref{eq:diagonaltransport} did not hold, we would have $\W^\beta_p(\nu_A) < \W^\beta_p(\nu)$ because 
\begin{equation}
    \gamma_{\opt} \mres \big( (\Omega \times \Omega) \setminus A \times A\big) \in \Pi(\nu_A^+,\nu_A^-) \ \text{ and }\ \beta|\nu_A|(\Omega) < \beta|\nu|(\Omega).
\end{equation}
Finally, noticing that for every $(p_1, p_2) \in \{0\} \times A$ or $(p_1, p_2) \in  A\times \{y\}$ we have $|\Xi(p_1) - \Xi(p_2)| < |p_1 - p_2|$, we obtain that
\begin{equation}\begin{aligned}
    W_p\big((\Xi_\# \nu)^+, (\Xi_\# \nu)^-\big) &\leq \int_{\Xi(\Omega) \times \Xi(\Omega)} |z-w|^p \dd(\Xi, \Xi)_\#\gamma_{\opt}(z,w) \\
    &< \int_{\Omega \times \Omega} |z-w|^p \dd\gamma_{\opt}(z,w)= W_p(\nu^+, \nu^-).
\end{aligned}\end{equation}
In conclusion, we must have $\supp \nu \subseteq [0,y]$ for any minimizer $\nu$ of \eqref{eq:innerminprobdipole}. Now, 
%denoting the dipole
%\begin{equation}
%    \mathcal{D}(y,0):=\frac{1}{\beta +|y|^p} \big(\delta_0-\delta_y \big) \ \text{ so that }\ \mathcal{D}(y,0)^+=\frac{1}{\beta +|y|^p} \delta_0\ \text{ and }\ \mathcal{D}(y,0)^-=\frac{1}{\beta +|y|^p} \delta_y,
%\end{equation} 
consider the following estimate for every balanced $\nu$ different from $\mathcal{D}_\beta(y,0) = \frac{\delta_0 - \delta_y}{\beta+|y|^p}$:
\begin{equation}\begin{aligned}\label{eq:esttria}
 J_I(\mathcal{D}_\beta(y,0)) &= W_p\left(\mathcal{D}_\beta(y,0)^+, \mathcal{D}_\beta(y,0)^- \right) + \frac{\beta}{2}\left| \mathcal{D}_\beta(y,0) \right|(\Omega) \\
& \leq W_p\left(\left(\mathcal{D}_\beta(y,0) - \nu\right)^+, \left(\mathcal{D}_\beta(y,0) - \nu\right)^-\right) + W_p(\nu^+, \nu^-)  + \frac{\beta}{2}|\mathcal{D}_\beta(y,0)|(\Omega)\\
& \leq  W_p(\nu^+, \nu^-) +  \frac{|y|^p}{2}  \left|\mathcal{D}_\beta(y,0) - \nu\right|(\Omega) +  \frac{\beta}{2} |\mathcal{D}_\beta(y,0)|(\Omega) \\
& <  W_p(\nu^+, \nu^-) +  \alpha |\mathcal{D}_\beta(y,0) - \nu|(\Omega) - \frac{\beta}{2}  |\mathcal{D}_\beta(y,0) - \nu|(\Omega)\nonumber + \frac{\beta}{2}  |\mathcal{D}_\beta(y,0)|(\Omega) \\
& \leq W_p(\nu^+, \nu^-) +  \alpha |\mathcal{D}_\beta(y,0) - \nu|(\Omega) + \frac{\beta}{2}|\nu|(\Omega) = J_I(\nu),
\end{aligned}\end{equation}
where in the first inequality we used the triangle inequality \eqref{eq:KRtriangle}, in the second that $\supp \nu \subseteq [0,y]$, and in the third the assumed bound $|y|^p < 2\alpha-\beta$. In particular, the dipole $\mathcal{D}_\beta(y,0)$ is the unique minimizer of $J_I$.\end{proof}

\begin{lemma}
Rescaled dipoles $\mathcal{D}_\beta(x,y)$ for which $|x-y|^p \geq 2\alpha - \beta$ are not extremal in $\big\{ \mu\, \big \vert\, \|\mu\|_{\KR^\ab_p} \leq 1 \big\}$.
\end{lemma}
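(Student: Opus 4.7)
The plan is to produce an explicit nontrivial convex decomposition of $\mathcal{D}_\beta(x,y)$ inside the unit ball of $\|\cdot\|_{\KR^{\alpha,\beta}_p}$, using the very two Diracs that appear in the definition of the dipole. Proposition~\ref{prop:deltasextkr} gives $\|\delta_z/\alpha\|_{\KR^{\alpha,\beta}_p}=1$ for every $z \in \Omega$, so by positive one-homogeneity of the norm, computing the $\KR^{\alpha,\beta}_p$ norm of any real multiple of $\delta_x$ or $\delta_y$ is immediate. This will be the only computational ingredient.

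Concretely, I would set
\[
\mu_1 \defeq \frac{2}{\beta+|x-y|^p}\,\delta_x,\qquad \mu_2 \defeq -\frac{2}{\beta+|x-y|^p}\,\delta_y,
\]
so that by construction $\mathcal{D}_\beta(x,y) = \tfrac{1}{2}\mu_1 + \tfrac{1}{2}\mu_2$. Homogeneity yields $\|\mu_i\|_{\KR^{\alpha,\beta}_p} = \tfrac{2\alpha}{\beta+|x-y|^p}$ for $i=1,2$, and the hypothesis $|x-y|^p \geq 2\alpha - \beta$ rearranges exactly to $\tfrac{2\alpha}{\beta+|x-y|^p} \leq 1$. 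Hence both $\mu_1$ and $\mu_2$ lie in $\{\mu \mid \|\mu\|_{\KR^{\alpha,\beta}_p} \leq 1\}$; since $x \neq y$ they are nonzero measures with disjoint supports and opposite signs, so in particular $\mu_1 \neq \mu_2$. The decomposition is therefore nontrivial and $\mathcal{D}_\beta(x,y)$ cannot be extremal.

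There is essentially no obstacle in this argument once the correct normalization of the two Diracs is guessed. The one thing worth pointing out is that a single decomposition transparently covers the complementary range to Proposition~\ref{prop:dipolesextkr}: at the boundary $|x-y|^p = 2\alpha - \beta$ one has $\|\mu_i\|_{\KR^{\alpha,\beta}_p}=1$, so $\mu_1, \mu_2$ are themselves extremal Diracs (up to sign) and the dipole is literally their midpoint; while for $|x-y|^p > 2\alpha - \beta$ the norms drop strictly below $1$, but the same formula still provides a valid nontrivial convex combination inside the unit ball. No case splitting or further machinery is required.
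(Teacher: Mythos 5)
Your decomposition is exactly the one used in the paper: both proofs write $\mathcal{D}_\beta(x,y)$ as the midpoint of $\tfrac{2}{\beta+|x-y|^p}\delta_x$ and $-\tfrac{2}{\beta+|x-y|^p}\delta_y$ and observe that $\|c\delta_z\|_{\KR^\ab_p}=\alpha c$ together with $|x-y|^p \geq 2\alpha-\beta$ places both summands in the unit ball. The argument is correct and matches the paper's proof essentially verbatim.
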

\begin{proof}
We just notice that these can be decomposed as
\begin{equation}
    \frac12 \left(\frac{2}{\beta+|x-y|^p}\delta_x\right) + \frac12 \left(-\frac{2}{\beta+|x-y|^p}\delta_y\right), 
\end{equation}
and that $\|c\delta_x\|_{\KR^\ab_p} = \|c\delta_y\|_{\KR^\ab_p} = \alpha c$ for all $c>0$.
\end{proof}

\section{Minimization problems with \texorpdfstring{$\KR_p^\ab$}{KR} regularization}\label{sec:minimizationproblem}
In this section we consider the following variational problem
\begin{equation} \label{def:problem}
    \inf_{\mu \in \mathcal{M}(\Omega)} J(\mu) \coloneqq F(K\mu)+\|\mu\|_{\KR_p^\ab} \tag{$\mathcal{P}$},
\end{equation}
where $Y$ is a given Hilbert space, the forward operator $K : \mathcal{M}(\Omega) \rightarrow Y$ is  weak*-to-strong continuous, and the discrepancy $F\colon Y \to \R$ is strictly convex, Frechet differentiable and $\nabla F$ is Lipschitz continuous on compact sets.
Moreover, from here on we assume that the set $\Omega$ is compact.

\subsection{Existence of minimizers}

Under the given assumptions on \eqref{def:problem}, the existence of solutions is a straightforward application of the direct method of calculus of variations.
\begin{thm}
There exists at least one solution~$\bar{\mu}$ of \eqref{def:problem}.
\end{thm}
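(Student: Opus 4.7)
My plan is to apply the direct method of the calculus of variations in the weak-$\ast$ topology of $\M(\Omega)$, relying on the weak-$\ast$ lower semicontinuity and coercivity already established in Section~\ref{sec:extremals}. The first step is to observe that the infimum is finite: since $J(0)=F(0)\in\R$ we have $\inf J \leq F(0)$, while $J$ is bounded below because $\|\cdot\|_{\KR_p^\ab}\geq 0$ and $F$ is bounded below (the tacit standing assumption for the inverse problem to be well-posed, satisfied in particular by the quadratic fidelity $F(y)=\tfrac{1}{2}\|y-y^\delta\|^2$ which is the motivating example).

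Next, I pick any minimizing sequence $(\mu_n)\subset\M(\Omega)$ with $J(\mu_n)\to\inf J$. Using the coercivity estimate \eqref{eq:coercivity} I deduce
\[
\min\!\left(\alpha,\tfrac{\beta}{2}\right)|\mu_n|(\Omega)\leq\|\mu_n\|_{\KR_p^\ab}\leq J(\mu_n)-\inf_Y F,
\]
so the sequence is uniformly bounded in total variation. Because $\Omega$ is compact, $\mathcal{C}_0(\Omega)=\mathcal{C}(\Omega)$ is separable, and the Banach-Alaoglu theorem together with sequential compactness of bounded sets in the weak-$\ast$ topology allow me to extract a (non-relabeled) subsequence with $\mu_n\weakstar\bar\mu$ for some $\bar\mu\in\M(\Omega)$.

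For the limiting step, the weak-$\ast$-to-strong continuity of $K$ gives $K\mu_n\to K\bar\mu$ strongly in $Y$, and since $F$ is continuous (being Fréchet differentiable) I obtain $F(K\mu_n)\to F(K\bar\mu)$. On the regularization side, Lemma~\ref{lem:lscKR} furnishes
\[
\|\bar\mu\|_{\KR_p^\ab}\leq\liminf_{n\to\infty}\|\mu_n\|_{\KR_p^\ab}.
\]
Adding the two contributions yields $J(\bar\mu)\leq\liminf_n J(\mu_n)=\inf J$, so $\bar\mu$ attains the infimum.

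I do not anticipate a genuine obstacle here: all the real work was done in Lemmas~\ref{lem:krexact}, \ref{lem:lscKR} and~\ref{lem:compKR}. The only mildly delicate point is ensuring $\inf J>-\infty$, which requires a lower bound on $F$; in the inverse-problem setting this holds automatically, and strict convexity plus differentiability of $F$ play no role in existence (they will be needed elsewhere, e.g., for uniqueness of $K\bar\mu$ and for the optimality conditions in the next section).
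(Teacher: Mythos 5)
Your argument is correct and is essentially the paper's own proof: both use the direct method in the weak-$*$ topology, combining the coercivity bound \eqref{eq:coercivity}, Banach--Alaoglu, the weak-$*$ lower semicontinuity of the $\KR_p^\ab$ norm from Lemma~\ref{lem:lscKR}, and the weak-$*$-to-strong continuity of $K$ together with continuity of $F$. Your explicit remarks that boundedness of $F$ from below is the only assumption actually needed (the paper's proof invokes it in the same way) and that strict convexity and differentiability play no role here are accurate.
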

\begin{proof}
Since $F$ is bounded from below, the infimum in \eqref{def:problem} is finite. Moreover, since $\beta >0$, the functional $J$ is coercive in $\mathcal{M}(\Omega)$ due to the trivial bound \eqref{eq:coercivity}. Using Lemma \ref{lem:lscKR} and the assumptions on $K$ and $F$ we also infer that $J$ is weak* lower semicontinuous. Therefore existence of minimizers for \eqref{def:problem} follows by a straightforward application of the direct method of calculus of variations.
\end{proof}

\subsection{First-order optimality conditions}
This section is devoted to the derivation of first-order necessary and sufficient optimality conditions for Problem~\eqref{def:problem}. We obtain the following characterization. %More in detail, we aim at proving the following characterization of minimizers to~\eqref{def:problem}. 
\begin{thm} \label{thm:firstorder}
Let~$\bar{\mu} \in \mathcal{M}(\Omega)$ be given. Moreover, let~$\bar{\nu}\in \mathcal{M}(\Omega)$,~$\bar{\nu}(\Omega)=0$, and~$\bar{\gamma} \in \Pi(\bar{\nu}_+, \bar{\nu}_-)$ be such that
\[
 \|\bar\mu\|_{\KR_p^\ab} =  \mathcal W^\beta_p(\bar \nu)  + \alpha|\bar\mu - \bar\nu|(\Omega)
\] 
as well as
\[
 W_p(\bar \nu)= \int_{\Omega \times \Omega} |x-y|^p~\dd\bar{\gamma}(x,y).
\]
Finally set~$\bar{q}:=-K_* \nabla F(K\bar\mu)$ as well as
\[
\Psi_{\bar{q}}(x,y):= \frac{\bar{q}(x)-\bar{q}(y)}{|x-y|^p+\beta} \quad \text{for all}~(x,y) \in\Omega\times\Omega.
\]
The following statements are equivalent:
\begin{itemize}
\item[1.] The measure~$\bar{\mu}$ is a solution to~\eqref{def:problem}. \label{item1}
\item[2.] There holds~$\bar{q} \in  \partial \|\bar\mu\|_{\KR_p^\ab}  $. \label{item2}
\item[3.] \label{item3} There holds~$|\bar{q}(x)|\leq \alpha$,~$\Psi_{\bar{q}}(x,y)\leq 1$ for all~$x,y \in \Omega$ as well as
\begin{equation}
 \supp(\bar{\mu}-\bar{\nu})^{\pm} \subset \left\{\,x \in \Omega\;|\;\bar{q}(x)=\pm \alpha\,\right\},\quad \supp \bar{\gamma} \subset \left\{\,(x,y) \in \Omega \times \Omega\;|\;\Psi_{\bar{q}}(x,y)= 1\,\right\}.
\end{equation}
\end{itemize}                                                                               
\end{thm}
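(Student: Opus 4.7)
My strategy is to treat the equivalences as $1 \Leftrightarrow 2$ (standard Fermat rule) and $2 \Leftrightarrow 3$ (dual characterization of the subdifferential of the one-homogeneous functional $\|\cdot\|_{\KR_p^\ab}$), relying on the extremal-point description from Theorem~\ref{thm:KRextremals} and the explicit optimal decomposition assumed in the hypotheses.

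For $1 \Leftrightarrow 2$, since $K$ is weak*-to-strong continuous and $\Omega$ is compact, $K^* : Y \to C(\Omega)$ is well defined, so $\bar q \in C(\Omega)$. The objective $J$ is convex, $F\circ K$ is Fr\'echet differentiable with derivative $K^*\nabla F(K\bar\mu)$, and $\|\cdot\|_{\KR_p^\ab}$ is convex and proper. Fermat's rule and the sum rule for the subdifferential therefore yield that $\bar\mu$ solves \eqref{def:problem} if and only if $0 \in K^*\nabla F(K\bar\mu) + \partial \|\bar\mu\|_{\KR_p^\ab}$, i.e. $\bar q \in \partial \|\bar\mu\|_{\KR_p^\ab}$.

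For $2 \Leftrightarrow 3$, I use that since $\|\cdot\|_{\KR_p^\ab}$ is positively one-homogeneous and weak* lower semicontinuous (Lemma~\ref{lem:lscKR}), $\bar q \in \partial \|\bar\mu\|_{\KR_p^\ab}$ is equivalent to the two separate conditions \textbf{(a)} $\sp{\bar q, \sigma} \leq \|\sigma\|_{\KR_p^\ab}$ for every $\sigma \in \M(\Omega)$ and \textbf{(b)} $\sp{\bar q, \bar\mu} = \|\bar\mu\|_{\KR_p^\ab}$. Condition (a) says that $\bar q$ lies in the polar of the unit ball $B := \{\sigma\mid \|\sigma\|_{\KR_p^\ab} \leq 1\}$. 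By Lemma~\ref{lem:compKR}, $B$ is the weak* closed convex hull of $\Ext(B)$, so its polar coincides with the polar of $\Ext(B)$. Plugging in the extremals $\pm \delta_x/\alpha$ from Theorem~\ref{thm:KRextremals} gives $|\bar q(x)|\leq \alpha$, while plugging in the rescaled dipoles gives $\Psi_{\bar q}(x,y) \leq 1$ for all $x,y$ with $|x-y|^p < 2\alpha-\beta$. For pairs with $|x-y|^p \geq 2\alpha-\beta$, the trivial bound $|\bar q(x)-\bar q(y)|\leq 2\alpha \leq \beta + |x-y|^p$ already forces $\Psi_{\bar q}(x,y)\leq 1$, so the two pointwise inequalities on $\Omega$ resp.\ $\Omega\times\Omega$ in item~3 are exactly equivalent to condition (a).

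For condition (b), I split $\sp{\bar q,\bar\mu} = \sp{\bar q, \bar\nu} + \sp{\bar q, \bar\mu - \bar\nu}$. Using $\bar\nu_+ = (\pi_1)_\#\bar\gamma$, $\bar\nu_- = (\pi_2)_\#\bar\gamma$, and $|\bar\nu|(\Omega) = 2\bar\gamma(\Omega\times\Omega)$, the first term rewrites as
\begin{equation*}
\sp{\bar q, \bar\nu} = \int_{\Omega\times\Omega}(\bar q(x)-\bar q(y))\dd\bar\gamma(x,y) = \int_{\Omega\times\Omega}\Psi_{\bar q}(x,y)(|x-y|^p+\beta)\dd\bar\gamma(x,y) \leq \W^\beta_p(\bar\nu),
\end{equation*}
with equality iff $\Psi_{\bar q}=1$ on $\supp\bar\gamma$. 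The second term satisfies $\sp{\bar q,\bar\mu-\bar\nu}\leq \alpha|\bar\mu-\bar\nu|(\Omega)$, with equality iff $\bar q = \pm\alpha$ on $\supp(\bar\mu-\bar\nu)^\pm$ (the standard characterization of subgradients of the total variation). Summing, and recalling the assumed optimality $\W^\beta_p(\bar\nu)+\alpha|\bar\mu-\bar\nu|(\Omega)=\|\bar\mu\|_{\KR_p^\ab}$, (b) is equivalent to both inequalities being tight, which is exactly the pair of support conditions in item~3. The main delicate point is the identification of the polar of $B$ with pointwise conditions on $\bar q$; here the explicit knowledge of $\Ext(B)$ from Theorem~\ref{thm:KRextremals} together with Krein-Milman is what makes the argument go through cleanly, and the redundancy of the dipole constraint when $|x-y|^p\geq 2\alpha-\beta$ has to be observed to reconcile item~3 with (a).
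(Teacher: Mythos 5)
Your proposal is correct. The equivalence $1\Leftrightarrow 2$ is handled exactly as in the paper (Fermat's rule plus the sum rule for a Gâteaux-differentiable smooth part). For $2\Leftrightarrow 3$ your route differs in organization from the paper's: the paper first invokes the subdifferential formula for an exact infimal convolution (Proposition~\ref{prop:subdiffKR}, via \cite[Cor.~2.4.7]{Zal02}) to decouple $\partial\|\bar\mu\|_{\KR_p^\ab}$ into $\alpha\,\partial|\bar\mu-\bar\nu|(\Omega)\cap\partial\W^\beta_p(\bar\nu)$, and only then characterizes each factor separately through Lemma~\ref{lem:linearoverext}, which computes the relevant suprema over the extremal points of the \emph{two separate} unit balls (Theorem~\ref{thm:masspenalizedext} for the balanced part). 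You instead apply the support-function characterization of the subdifferential of the full one-homogeneous norm directly, identify the polar of $B$ via Krein--Milman and Theorem~\ref{thm:KRextremals}, and recover the decoupling as a byproduct of the chain $\sp{\bar q,\bar\mu}=\sp{\bar q,\bar\nu}+\sp{\bar q,\bar\mu-\bar\nu}\leq \W^\beta_p(\bar\nu)+\alpha|\bar\mu-\bar\nu|(\Omega)=\|\bar\mu\|_{\KR_p^\ab}$, forcing both inequalities to be tight. This buys you independence from the abstract inf-convolution subdifferential rule, at the cost of having to observe that the dipole constraint $\Psi_{\bar q}(x,y)\leq 1$ is automatic when $|x-y|^p\geq 2\alpha-\beta$ (a point the paper only needs later, in Lemma~\ref{lem:terminationcriterion}, precisely because its Lemma~\ref{lem:linearoverext} works with the unrestricted dipoles of the balanced ball). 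Your computation of $\sp{\bar q,\bar\nu}$ through $\bar\gamma$ and the resulting support condition coincide with the paper's Lemma~\ref{lem:subdiffT}; both arguments are sound.
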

The proof is split into several parts. First, we note that $\partial \|\bar\mu\|_{\KR_p^\ab}$ is given by the intersection of the subdifferentials of the two terms in it:

\begin{prop}\label{prop:subdiffKR}
We have 
\begin{equation}
\bar{q} \in \partial \|\bar\mu\|_{\KR_p^\ab} \Leftrightarrow \bar{q} \in \alpha \partial \|\bar\mu-\bar\nu\|_\mathcal{M} \cap \partial \mathcal{W}^\beta_p(\bar{\nu}).
\end{equation}
\end{prop}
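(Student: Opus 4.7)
The plan is to recognize that
\[
\|\mu\|_{\KR_p^\ab} = \inf_\nu \bigl(\alpha|\mu-\nu|(\Omega) + \mathcal{W}_p^\beta(\nu)\bigr)
\]
is precisely the infimal convolution of the two convex functionals $\alpha\|\cdot\|_\mathcal{M}$ and $\mathcal{W}_p^\beta$, and that the hypothesis on $\bar\nu$ is exactly that this infimum is attained at $\bar\nu$. In this exact case, the claim is a particular instance of the standard Moreau identity for the subdifferential of an inf-convolution. Rather than invoke this as a black box in our weak* dual setting, I would verify both implications directly by elementary manipulations, keeping in mind that $\bar q = -K^*\nabla F(K\bar\mu)\in \mathcal{C}(\Omega)$ so the subgradient inequalities are interpreted through the $\langle \mathcal{C}(\Omega),\mathcal{M}(\Omega)\rangle$ pairing.

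For the direction ``$\Leftarrow$'', I would take arbitrary $\mu,\nu \in \mathcal{M}(\Omega)$ and write the two defining subgradient inequalities,
\begin{align*}
\alpha|\mu-\nu|(\Omega) &\geq \alpha|\bar\mu-\bar\nu|(\Omega) + \langle \bar q,(\mu-\nu)-(\bar\mu-\bar\nu)\rangle,\\
\mathcal{W}_p^\beta(\nu) &\geq \mathcal{W}_p^\beta(\bar\nu) + \langle \bar q,\nu-\bar\nu\rangle,
\end{align*}
coming from $\bar q \in \alpha\partial\|\bar\mu-\bar\nu\|_\mathcal{M}$ and $\bar q \in \partial \mathcal{W}_p^\beta(\bar\nu)$ respectively. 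Adding the two, the $\nu$-displacements telescope and produce
\[
\alpha|\mu-\nu|(\Omega) + \mathcal{W}_p^\beta(\nu) \;\geq\; \|\bar\mu\|_{\KR_p^\ab} + \langle \bar q, \mu-\bar\mu\rangle,
\]
where the right-hand side uses the exactness assumption. Taking the infimum over $\nu$ on the left delivers $\bar q \in \partial \|\bar\mu\|_{\KR_p^\ab}$.

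For the direction ``$\Rightarrow$'' I would test the subgradient inequality for $\bar q$ with two well-chosen families of perturbations. To extract $\bar q \in \partial\mathcal{W}_p^\beta(\bar\nu)$, I set $\mu_\nu := \bar\mu - \bar\nu + \nu$ for arbitrary $\nu$, so that $\mu_\nu - \nu = \bar\mu - \bar\nu$ and hence, using $\nu$ as a candidate in the inf-convolution, $\|\mu_\nu\|_{\KR_p^\ab} \leq \alpha|\bar\mu-\bar\nu|(\Omega) + \mathcal{W}_p^\beta(\nu)$. Combining with the subgradient inequality at $\bar\mu$ tested at $\mu_\nu$ and the exactness identity $\|\bar\mu\|_{\KR_p^\ab}=\alpha|\bar\mu-\bar\nu|(\Omega)+\mathcal{W}_p^\beta(\bar\nu)$, the TV contributions cancel and the desired $\mathcal{W}_p^\beta$-subgradient inequality falls out. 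For the complementary membership in $\alpha\partial\|\bar\mu-\bar\nu\|_\mathcal{M}$, I hold $\nu=\bar\nu$ fixed as a candidate in the inf-convolution, giving $\|\mu\|_{\KR_p^\ab}\leq \alpha|\mu-\bar\nu|(\Omega)+\mathcal{W}_p^\beta(\bar\nu)$ for any $\mu$; combining with the subgradient inequality at $\bar\mu$ and the exactness identity cancels $\mathcal{W}_p^\beta(\bar\nu)$ and leaves exactly the subgradient inequality for $\alpha\|\cdot\|_\mathcal{M}$ at the point $\bar\mu-\bar\nu$ written in the variable $\tilde\mu=\mu-\bar\nu$.

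There is no real obstacle in this argument; the entire content is a routine duality manipulation. The only ingredient not of purely algebraic nature is the attainment of the inf-convolution at some $\bar\nu$, which is guaranteed by Lemma~\ref{lem:krexact} under the standing assumption $\beta>0$, and the fact that $\bar q$ is a continuous function and hence a legitimate element of the pre-dual, so that the subgradient inequalities above can be written in the first place.
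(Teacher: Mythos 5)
Your proof is correct and follows essentially the same route as the paper: both rest on the exactness of the infimal convolution at $\bar\nu$ together with the standard identity $\partial(f\,\square\, g)(\bar\mu)=\partial f(\bar\mu-\bar\nu)\cap\partial g(\bar\nu)$ at points of exactness. The only difference is that the paper invokes this identity by citation (to Z\u{a}linescu), whereas you verify it directly with the elementary subgradient manipulations; your verification of both implications is sound.
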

\begin{proof}
Since the infimal convolution in $\|\bar\mu\|_{\KR_p^\ab}$ is exact, i.e.,
\[
\|\bar\mu\|_{\KR_p^\ab} = \inf_{\nu } \left \lbrack \mathcal W^\beta_p( \nu)  + \alpha|\bar\mu - \nu|(\Omega)\right \rbrack= \mathcal W^\beta_p(\bar \nu)  + \alpha|\bar\mu - \bar\nu|(\Omega),
\]
the claimed statement follows from~\cite[Cor.~2.4.7]{Zal02}.
\end{proof}
%\begin{proof}
%First, assume that $\bar{q} \in \partial \|\bar\mu-\bar\nu\|_\mathcal{M} \cap \partial \mathcal{W}^\beta_p(\bar{\nu})$. Let~$\mu,~\nu \in M(\Omega)$ be arbitrary but fixed. Then we have 
%\begin{align*}
%\langle  \bar{q},\mu-\bar{\mu}\rangle + \W^\beta_p(\nu) + |\mu - \nu|(\Omega) &=\langle  \bar{q},\mu-\nu-\bar{\mu}+\bar{\nu}\rangle+\langle  \bar{q},\nu-\bar{nu}\rangle+ \W^\beta_p(\nu) + |\mu - \nu|(\Omega)
%\\ & \leq \W^\beta_p(\bar\nu) + |\bar\mu - \bar\nu|(\Omega)=  \|\bar\mu\|_{\KR_p^\ab}.
%\end{align*}
%Taking the infimum over~$\nu \in M(\Omega)$ on both sides of the inequality, we conclude~$\bar{q} \in  \partial \|\bar\mu\|_{\KR_p^\ab}$. 
%
%Second, assume that~$\bar{q} \in  \partial \|\bar\mu\|_{\KR_p^\ab}$. We only show~$\bar{q} \in \partial \mathcal{W}^\beta_p(\bar{\nu}) $, the inclusion~$\bar{q} \in \partial \|\bar\mu-\bar\nu\|_\mathcal{M}$ follows analogously.
%\[
%\mathcal{W}^\beta_p({\nu})-\mathcal{W}^\beta_p(\bar{\nu}) = \mathcal{W}^\beta_p({\nu})+  ||(\Omega) -\mathcal{W}^\beta_p(\bar{\nu})
%\] 
%\end{proof}
%\begin{rem} \label{d}
%Note that the proof of Proposition~\ref{prop:subdiffKR} does not make use of any specific properties of~$|\cdot|(\Omega)$ and~$\W^\beta_p(\nu)$ beyond convexity. More in detail, it solely relies on the exactness of the infimal convolution in i.e. the existence of a minimizer~$\bar{\nu}$ to
%see also~\todo{Add}
%Thus, equivalent results also hold for general infimal convolutions which is of independent interest. We also point out where the authors provide the direction in reflexive Banach spaces 
%\end{rem}
As a consequence, it suffices to characterize the sets~$\alpha\partial \|\bar\mu-\bar\nu\|_\mathcal{M}$ and~$\partial \mathcal{W}^\beta_p(\bar{\nu})$, respectively. In both cases, we make use of the following auxiliary result which is based on~\cite[Lemma~3.1]{linconv} as well as the characterization of the extremal points of~$\{\,\mu \in \mathcal{M}(\Omega)\;|\;|\mu|(\Omega)\leq 1\,\}$ and~$\{\,\nu \in \mathcal{M}(\Omega)\;|\;\mathcal{W}^\beta_p({\nu})\leq 1\,\}$, respectively.
\begin{lemma} \label{lem:linearoverext}
Let~$\bar{q} \in \mathcal{C}(\Omega)$ be given. Then there holds
\[
\max_{|\mu|(\Omega)\leq 1} \langle \bar{q}, \mu \rangle= \max_{x \in \Omega} |\bar{q}(x)|,~\max_{\mathcal{W}^\beta_p({\nu})\leq 1} \langle \bar{q}, \nu \rangle= \max_{(x,y) \in \Omega \times \Omega} \Psi_{\bar{q}}(x,y).
\]
\end{lemma}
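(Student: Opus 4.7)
The plan is to reduce both identities to the general principle: a weak*-continuous linear functional on a weak*-compact convex set attains its maximum at an extremal point (Bauer's maximum principle, as captured in the cited \cite[Lem.~3.1]{linconv}), and then to identify these extremal points explicitly using the structural results already proved.

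For the first identity, I would start by noting that the unit ball $\{\mu\in\M(\Omega)\mid |\mu|(\Omega)\leq 1\}$ is weak*-compact by Banach--Alaoglu (recall that $\Omega$ is compact in Section~\ref{sec:minimizationproblem}, so $\mathcal{C}_0(\Omega)=\mathcal{C}(\Omega)$), it is convex, and the map $\mu\mapsto \langle\bar q,\mu\rangle$ is weak*-continuous since $\bar q\in\mathcal{C}(\Omega)$. Its extremal points are the well-known collection $\{\pm\delta_x\mid x\in\Omega\}$, so the maximum of $\langle\bar q,\cdot\rangle$ equals $\sup_{x\in\Omega}\max(\bar q(x),-\bar q(x))=\max_{x\in\Omega}|\bar q(x)|$.

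For the second identity, I would first verify the corresponding compactness for the set $\{\nu\mid \mathcal{W}_p^\beta(\nu)\leq 1\}$. The coercivity estimate $\mathcal{W}_p^\beta(\nu)\geq (\beta/2)|\nu|(\Omega)$ together with Banach--Alaoglu gives weak* precompactness, and the proof of Lemma~\ref{lem:krexact} shows that $\nu\mapsto \mathcal{W}_p^\beta(\nu)$ is weak* lower semicontinuous (using duality~\eqref{eq:Wpduality} for $W_p$, the closedness of balanced measures, and lower semicontinuity of total variation), so the set is weak*-compact and convex. By Theorem~\ref{thm:masspenalizedext}, its extremal points are precisely the rescaled dipoles $\mathcal{D}_\beta(x,y)$ with $x\neq y$, and a direct computation yields
\[
\langle \bar q,\mathcal{D}_\beta(x,y)\rangle=\frac{\bar q(x)-\bar q(y)}{\beta+|x-y|^p}=\Psi_{\bar q}(x,y).
\]
Applying Bauer's principle and noting that the supremum is automatically nonnegative (the zero measure lies in the ball, and swapping $x$ with $y$ flips the sign, so we may take the positive orientation), while $\Psi_{\bar q}(x,x)=0$ for $\beta>0$, we conclude $\max_{\mathcal{W}_p^\beta(\nu)\leq 1}\langle \bar q,\nu\rangle=\sup_{(x,y)\in\Omega\times\Omega}\Psi_{\bar q}(x,y)$, and this supremum is attained by continuity of $\Psi_{\bar q}$ on the compact set $\Omega\times\Omega$.

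The only point where one must be careful is the passage from Krein--Milman (which only asserts that the compact convex set equals the closed convex hull of its extremal points) to the statement that a continuous linear functional actually attains its maximum on an extremal point; this is exactly the content of \cite[Lem.~3.1]{linconv}, which also handles the subtlety that the extremal points may fail to be closed in the weak* topology. Apart from invoking that lemma, the proof is essentially a bookkeeping exercise combining the extremal point characterizations already established with the elementary formula for $\langle\bar q,\mathcal{D}_\beta(x,y)\rangle$.
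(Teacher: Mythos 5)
Your proposal is correct and follows essentially the same route as the paper: both reduce each identity to \cite[Lemma~3.1]{linconv} (maximization of a weak*-continuous linear functional over the extremal points of the weak*-compact ball) together with the known extremal points $\pm\delta_x$ of the total variation ball and, via Theorem~\ref{thm:masspenalizedext}, the rescaled dipoles $\mathcal{D}_\beta(x,y)$ for the $\mathcal{W}^\beta_p$ ball. The additional checks you supply --- weak* compactness of $\{\nu \mid \mathcal{W}^\beta_p(\nu)\leq 1\}$ via coercivity and Lemma~\ref{lem:krexact}, and the harmless inclusion of the diagonal where $\Psi_{\bar q}(x,x)=0$ --- are exactly the details the paper leaves implicit when invoking the cited lemma.
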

\begin{proof}
From~\cite[Lemma 3.1]{linconv} and \cite[Lemma 3.10]{linconv}, respectively, we get that
\[
\max_{|\mu|(\Omega)\leq 1} \langle \bar{q}, \mu \rangle=\max_{x \in \Omega, \sigma \in \{-1,1\} } \sigma\langle \bar{q}, \delta_x \rangle= \max_{x \in \Omega} |\bar{q}(x)|.
\]
Similarly, now invoking Theorem~\ref{thm:masspenalizedext} (and again using \cite[Lemma 3.1]{linconv}), there holds
\[
\max_{\mathcal{W}^\beta_p({\nu})\leq 1} \langle \bar{q}, \nu \rangle= \max_{(x,y)\in \Omega \times \Omega} \frac{\bar{q}(x)-\bar{q}(y)}{|x-y|^p+\beta}=\max_{(x,y)\in \Omega \times \Omega} \Psi_{\bar{q}}(x,y).\qedhere
\]
\end{proof}

\begin{lemma} \label{lem:subdiffM}
The function~$\bar{q} \in \mathcal{C}(\Omega)$ satisfies~$\bar{q} \in \alpha \partial |\bar\mu-\bar\nu|(\Omega)$ if and only if
\begin{equation} \label{eq:suppcondM}
|\bar{q}(x)|\leq \alpha, \quad \supp(\bar{\mu}-\bar{\nu})^{\pm} \subset \left\{\,x \in \Omega\;|\;\bar{q}(x)=\pm \alpha\,\right\}.
\end{equation}
\end{lemma}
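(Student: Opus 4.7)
The plan is to reduce the statement to the standard characterization of the subdifferential of a convex, proper, positively one-homogeneous functional: for such a functional $f$ with $f(0)=0$, one has $\bar q \in \partial f(u)$ if and only if $\bar q$ lies in the polar set $\{q : \langle q,\eta\rangle \leq f(\eta)\text{ for all }\eta\}$ and $\langle \bar q, u\rangle = f(u)$. Applying this with $f = \alpha|\cdot|(\Omega)$ and $u = \bar\mu - \bar\nu$ splits the characterization into a ``dual feasibility'' condition and an ``extremality'' condition, which will correspond to the two parts of~\eqref{eq:suppcondM} respectively.

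For the dual feasibility condition, I would note that $\langle \bar q,\eta\rangle \leq \alpha|\eta|(\Omega)$ for every $\eta \in \M(\Omega)$ is equivalent to $\sup_{|\eta|(\Omega)\leq 1}\langle \bar q,\eta\rangle \leq \alpha$, and this supremum equals $\max_{x\in \Omega}|\bar q(x)|$ by the first identity in Lemma~\ref{lem:linearoverext}. This immediately yields the pointwise bound $|\bar q(x)|\leq \alpha$ on $\Omega$.

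For the extremality condition, I would write the Hahn--Jordan decomposition and compute
\begin{equation*}
\langle \bar q,\bar\mu-\bar\nu\rangle = \int_\Omega \bar q \dd(\bar\mu-\bar\nu)^+ - \int_\Omega \bar q \dd(\bar\mu-\bar\nu)^-.
\end{equation*}
Using the pointwise bound $|\bar q(x)|\leq \alpha$ already obtained, each integral is controlled by $\alpha(\bar\mu-\bar\nu)^\pm(\Omega)$, and the total sum equals $\alpha|\bar\mu-\bar\nu|(\Omega)$ precisely when $\bar q = \alpha$ holds $(\bar\mu-\bar\nu)^+$-almost everywhere and $\bar q = -\alpha$ holds $(\bar\mu-\bar\nu)^-$-almost everywhere. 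Because $\bar q$ is continuous, the level sets $\bar q^{-1}(\{\pm\alpha\})$ are closed, so these almost-everywhere identities upgrade to the support inclusions $\supp(\bar\mu-\bar\nu)^\pm \subset \{x : \bar q(x)=\pm\alpha\}$, and conversely the support inclusions trivially imply the integral equality.

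The only subtle point, which I do not expect to be difficult, is this passage from an almost-everywhere statement to a support statement; it relies on the continuity of $\bar q$ and the fact that supports are defined as the smallest closed sets of full measure. The rest is a direct bookkeeping of the one-homogeneous subdifferential identity together with Lemma~\ref{lem:linearoverext}.
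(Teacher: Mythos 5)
Your proposal is correct and follows essentially the same route as the paper: the one-homogeneous subdifferential characterization splits the statement into a dual feasibility condition, handled via Lemma~\ref{lem:linearoverext}, and an extremality condition equivalent to the support inclusions. The only difference is that the paper cites an external result for the last equivalence, whereas you prove it directly via the Hahn--Jordan decomposition and the continuity of $\bar q$; your argument for that step is sound.
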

\begin{proof}
Since~$|\cdot|(\Omega)$ is positively one-homogeneous, there holds~$\bar{q} \in \alpha\partial |\bar\mu-\bar\nu|(\Omega)$ if and only if
\[
\max_{|\mu|(\Omega)\leq 1} \langle \bar{q}, \mu \rangle \leq \alpha, ~ \langle \bar{q}, \bar\mu-\bar\nu  \rangle= \alpha|\bar\mu-\bar\nu|(\Omega) 
\]
According to Lemma~\ref{lem:linearoverext} this is equivalent to
\begin{equation} \label{eq:suppcondMhelp}
\max_{x\in\Omega} | \bar{q}(x)| \leq \alpha, ~ \langle \bar{q}, \bar\mu-\bar\nu  \rangle= \alpha |\bar\mu-\bar\nu|(\Omega) 
\end{equation}
Finally,~\cite[Lemma~3.4]{KK} yields the equivalence of~\eqref{eq:suppcondMhelp} and~\eqref{eq:suppcondM}.
\end{proof}
\begin{lemma} \label{lem:subdiffT}
The function~$\bar{q} \in \mathcal{C}(\Omega)$ satisfies~$\bar{q} \in \partial\mathcal{W}^\beta_p(\bar{\nu}) $ if and only if
\[
\Psi_{\bar{q}}(x,y)\leq 1 \quad\text{for all}~(x,y) \in \Omega \times \Omega, \quad\text{and}~ \ \supp \bar{\gamma} \subset \left\{\,(x,y) \in \Omega \times \Omega\;|\;\Psi_{\bar{q}}(x,y)= 1\,\right\}. 
\]
\end{lemma}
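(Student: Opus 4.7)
The plan is to follow the same template as Lemma~\ref{lem:subdiffM}, using that $\mathcal{W}^\beta_p$ is proper, convex, and positively one-homogeneous on $\mathcal{M}(\Omega)$ (the Wasserstein and total variation terms scale linearly under $\nu \mapsto t\nu$ for $t>0$, and the indicator $\mathbf{1}_{\nu(\Omega)=0}$ is one-homogeneous in the convex-analytic sense). By a standard characterization of the subdifferential of a one-homogeneous functional, $\bar q \in \partial \mathcal{W}^\beta_p(\bar\nu)$ is equivalent to the pair of conditions
\begin{equation*}
\sup_{\mathcal{W}^\beta_p(\nu)\leq 1}\langle \bar q,\nu\rangle \leq 1, \qquad \langle \bar q,\bar\nu\rangle = \mathcal{W}^\beta_p(\bar\nu).
\end{equation*}
The first condition, by Lemma~\ref{lem:linearoverext}, is precisely the pointwise bound $\Psi_{\bar q}(x,y)\leq 1$ on all of $\Omega\times\Omega$.

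The second condition will be rewritten using the optimal plan $\bar\gamma$. Since $\bar\nu(\Omega)=0$, the marginals of $\bar\gamma$ are $\bar\nu^{\pm}$ and both have the same mass $\bar\gamma(\Omega\times\Omega)$, so
\begin{equation*}
\mathcal{W}^\beta_p(\bar\nu) = \int_{\Omega\times\Omega}|x-y|^p \dd\bar\gamma(x,y) + \beta\,\bar\gamma(\Omega\times\Omega) = \int_{\Omega\times\Omega}\bigl(|x-y|^p+\beta\bigr)\dd\bar\gamma(x,y).
\end{equation*}
Using the same marginal identity together with the definition of the projections, the linear term becomes
\begin{equation*}
\langle \bar q,\bar\nu\rangle = \int_\Omega \bar q \dd\bar\nu^+ - \int_\Omega \bar q \dd\bar\nu^- = \int_{\Omega\times\Omega}\bigl(\bar q(x)-\bar q(y)\bigr)\dd\bar\gamma(x,y).
\end{equation*}
Subtracting, the equality $\langle\bar q,\bar\nu\rangle = \mathcal{W}^\beta_p(\bar\nu)$ is equivalent to
\begin{equation*}
\int_{\Omega\times\Omega}\bigl(\Psi_{\bar q}(x,y)-1\bigr)\bigl(|x-y|^p+\beta\bigr)\dd\bar\gamma(x,y) = 0.
\end{equation*}

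Combining the two conditions: the integrand is continuous (here the assumption $\beta>0$ is crucial, as it rules out any singularity of $\Psi_{\bar q}$ along the diagonal and ensures $|x-y|^p+\beta>0$ everywhere) and nonpositive by the first condition. Vanishing of its integral against the nonnegative measure $\bar\gamma$ therefore forces the integrand to vanish $\bar\gamma$-almost everywhere and, by continuity, identically on $\supp\bar\gamma$. Since $|x-y|^p+\beta\geq\beta>0$, this is exactly the stated support condition $\supp\bar\gamma\subset\{\Psi_{\bar q}=1\}$, and all implications reverse. The only mildly delicate point, which I expect to be the main obstacle, is the clean rewriting of $\mathcal{W}^\beta_p(\bar\nu)$ and $\langle\bar q,\bar\nu\rangle$ as a single integral against $\bar\gamma$, and the continuity argument that transfers the $\bar\gamma$-a.e.\ equality to the topological support; everything else is bookkeeping parallel to the proof of Lemma~\ref{lem:subdiffM}.
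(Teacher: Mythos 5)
Your proof is correct and follows essentially the same route as the paper: the positive one-homogeneity reduction to the two conditions, the identification of the supremum via Lemma~\ref{lem:linearoverext}, the rewriting of both $\mathcal{W}^\beta_p(\bar\nu)$ and $\langle\bar q,\bar\nu\rangle$ as integrals against $\bar\gamma$ using the marginal identities, and the sign argument forcing $\Psi_{\bar q}=1$ on $\supp\bar\gamma$. The continuity point you flag as delicate is handled implicitly in the paper by the same positivity-of-$\bar\gamma$ argument, so there is nothing to add.
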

\begin{proof}
First, since~$\bar{\gamma} \in \Pi(\bar{\nu}^+,\bar{\nu}^-)$, we have
\[
|\bar{\gamma}|(\Omega\times \Omega)= \int_{\Omega \times \Omega}1~\mathrm{d}\bar{\gamma}(x,y)= \int_{\Omega} 1~\mathrm{d}\bar{\nu}^+= |\bar{\nu}^+|(\Omega)= \frac{1}{2} |\bar{\nu}|(\Omega) 
\]
and thus
\begin{equation} \label{eq:altrepofW}
    \mathcal{W}^\beta_p(\bar{\nu})= \int_{\Omega \times \Omega} |x-y|^p+\beta ~\mathrm{d}\bar{\gamma}(x,y).
\end{equation}
Second, as in Lemma~\ref{lem:subdiffM}, there holds~$\bar{q} \in \partial\mathcal{W}^\beta_p(\bar{\nu}) $ if and only if
\begin{equation} \label{eq:helpsubdiffM}
\max_{\mathcal{W}^\beta_p({\nu})\leq 1 } \langle \bar{q}, \nu\rangle \leq 1, \quad \langle \bar{q},  \bar{\nu}\rangle= \mathcal{W}^\beta_p(\bar{\nu})
\end{equation}
due to the positive one-homogeneity of~$\mathcal{W}^\beta_p$. Invoking Lemma~\ref{lem:linearoverext},~\eqref{eq:helpsubdiffM} holds if and only if
\[
\max_{(x,y) \in \Omega\times\Omega } \Psi_{\bar{q}}(x,y) \leq 1, \quad \langle \bar{q},  \bar{\nu}\rangle= \mathcal{W}^\beta_p(\bar{\nu}).
\]
Now, again using $\bar{\gamma} \in \Pi(\bar{\nu}^+,\bar{\nu}^-)$ as well as the definition of~$\Psi_{\bar{q}}$, we get
\[
    \langle \bar{q},  \bar{\nu}\rangle= \langle \bar{q},\bar{\nu}^+ \rangle- \langle \bar{q},\bar{\nu}^- \rangle= \int_{\Omega\times\Omega} \bar{q}(x)-\bar{q}(y)~\mathrm{d}\bar{\gamma}(x,y)=\int_{\Omega\times\Omega} \Psi_{\bar{q}}(x,y)\left(|x-y|^p +\beta\right)~\mathrm{d}\bar{\gamma}(x,y).
\]
Consequently, see also~\eqref{eq:altrepofW},~$\langle \bar{q},  \bar{\nu}\rangle= \mathcal{W}^\beta_p(\bar{\nu})$ is equivalent to
\begin{equation}\label{eq:presupphamma}
    \int_{\Omega\times\Omega}\left( \Psi_{\bar{q}} (x,y) -1 \right)\left(|x-y|^p+\beta \right)~\mathrm{d}\bar{\gamma}(x,y)=0.
\end{equation}
Since~$\Psi(x,y)\leq 1$ for all~$(x,y)\in\Omega \times \Omega$ the integrand above is non-positive. Hence, due to the positivity of~$\bar{\gamma}$,~\eqref{eq:presupphamma} holds if and only if
\[
   \supp \bar{\gamma} \subset \left\{\,(x,y) \in \Omega \times \Omega\;|\;\Psi_{\bar{q}}(x,y)= 1\,\right\}\qedhere
\]
\end{proof}
Combining the observations of Lemma~\ref{lem:subdiffM} and~\eqref{lem:subdiffT}, respectively, with Proposition~\ref{prop:subdiffKR}, we are finally able to prove Theorem~\ref{thm:firstorder}.
\begin{proof}
[Proof of Theorem~\ref{thm:firstorder}]
Since~the objective functional~$J$ in~\eqref{def:problem} is convex,~$\bar{\mu}\in \mathcal{M}(\Omega)$ is a solution to~\eqref{def:problem} if and only if~$0 \in \partial J(\bar{\mu})$. 
Note that the function~$f= F \circ K$ is convex, weak*-to-strong continuous and G\^ateaux differentiable. Its G\^ateaux  derivative at~$\mu \in \mathcal{M}(\Omega)$ in the direction of~$\delta \mu \in \mathcal{M}(\Omega)$ is given by
\[
f'(\mu) \delta \mu= \langle K_* \nabla F(K\bar{u}), \delta \mu  \rangle=-\langle \bar{q}, \delta \mu  \rangle.
\]
Consequently, due to the characterization of the subgradient for G\^ateaux-differentiable functions as well as the sum rule (see \cite[Prop. 5.3, Prop 5.6]{temam}, for example) we arrive at
\[
0 \in \partial J(\bar{\mu}) \Leftrightarrow \bar{q} \in  \partial \|\bar\mu\|_{\KR_p^\ab}.
\]
This proves~$(1.)\Leftrightarrow (2.)$ in Theorem~\ref{thm:firstorder}.

Thus, it remains to show~$(2.)\Leftrightarrow (3.)$. However, this immediately follows from Proposition~\ref{prop:subdiffKR} taking into account Lemma~\ref{lem:subdiffM} and Lemma~\ref{lem:subdiffT}, respectively.
\end{proof}
\begin{prop} \label{e}
Let~$\bar{\mu}$ be a solution to~\eqref{def:problem} and let~$\bar{\nu},~\bar{\gamma}$ as well as~$\bar{q}$ and~$\Psi_{\bar{q}}$ be as in Theorem~\ref{thm:firstorder}. Moreover, assume that there are~$\bar{N}_1,~\bar{N}_2 \in \mathbb{N} $ as well as finite sets~$\left\{\bar{z}_i\right\}^{\bar{N}_1}_{i=1}\subset \Omega$ and~$\left\{(\bar{x}_j,\bar{y}_j)\right\}^{\bar{N}_2}_{j=1}\subset \Omega\times \Omega$, respectively, with
\[
  \left\{\,x \in \Omega\;|\;\bar{q}(x)=\pm \alpha\,\right\}=\left\{\bar{z}_i\right\}^{\bar{N}_1}_{i=1},\quad \left\{\,(x,y) \in \Omega \times \Omega\;|\;\Psi_{\bar{q}}(x,y)= 1\,\right\}=\left\{(\bar{x}_j,\bar{y}_j)\right\}^{\bar{N}_2}_{j=1}.
\]
Then there are coefficients~$\bar{\zeta}_i, \bar{\lambda}_j \geq 0$,~$i=1,\dots,\bar{N}_1$,~$j=1,\dots,\bar{N}_2$, such that
\begin{equation} \label{eq:sparsesolstruc}    \bar{\mu}=\sum^{\bar{N}_1}_{i=1} (\bar{q}(\bar{z}_i)/\alpha)\bar\zeta_i \delta_{\bar{z}_i}+\sum^{\bar{N}_2}_{j=1} \bar\lambda_j \mathcal{D}_\beta(\bar{x}_j, \bar{y}_j) , \quad \bar{\nu}=\sum^{\bar{N}_2}_{j=1}\bar\lambda_j \mathcal{D}_\beta(\bar{x}_j, \bar{y}_j).
\end{equation}
Moreover there holds
\[
    \bar{\gamma}= \sum^{\bar{N}_2}_{j=1}\bar\lambda_j\frac{\delta_{(\bar x_j,\bar y_j)}}{|{\bar{x}_j}-{\bar{y}_j}|^p+\beta} ,\quad |\bar{\mu}-\bar{\nu}|(\Omega)=\sum^{\bar{N}_1}_{i=1} \bar \zeta_i, \quad \mathcal{W}^\beta_p(\bar \nu)=\sum^{\bar{N}_2}_{j=1} \bar \lambda_j.
\]
\end{prop}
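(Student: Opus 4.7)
The plan is to observe that, under the finiteness assumption on the active sets, the proposition is essentially a bookkeeping consequence of item~(3) of Theorem~\ref{thm:firstorder}: both the support condition on~$\bar\mu - \bar\nu$ and the one on~$\bar\gamma$ force these measures to be finite linear combinations of Dirac masses at the prescribed locations. Once this atomic structure is in hand, all four identities of the proposition follow by reading off coefficients and appealing to the representation~\eqref{eq:altrepofW} of $\mathcal{W}^\beta_p$.

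First I would handle the measure~$\bar\mu - \bar\nu$. Theorem~\ref{thm:firstorder}(3) together with the hypothesis on~$\{x \in \Omega \,|\, \bar q(x) = \pm \alpha\}$ gives $\supp(\bar\mu-\bar\nu)^\pm \subset \{\bar z_i\}_{i=1}^{\bar N_1}$, so that $\bar\mu-\bar\nu = \sum_{i=1}^{\bar N_1} \eta_i \delta_{\bar z_i}$ for some scalars~$\eta_i$. Since~$\alpha > 0$ the level sets $\{\bar q = \alpha\}$ and $\{\bar q = -\alpha\}$ are disjoint, and the inclusion $\supp(\bar\mu-\bar\nu)^\pm \subset \{\bar q = \pm\alpha\}$ forces $\mathrm{sign}(\eta_i) = \mathrm{sign}(\bar q(\bar z_i))$. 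Setting $\bar\zeta_i := |\eta_i|\ge 0$ yields $\eta_i = (\bar q(\bar z_i)/\alpha)\,\bar\zeta_i$, and since the $\delta_{\bar z_i}$ are mutually singular one immediately gets $|\bar\mu - \bar\nu|(\Omega) = \sum_i \bar\zeta_i$.

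Next I would do the same for the transport plan. The support condition $\supp\bar\gamma \subset \{(\bar x_j,\bar y_j)\}_{j=1}^{\bar N_2}$ forces $\bar\gamma = \sum_{j=1}^{\bar N_2} c_j\, \delta_{(\bar x_j,\bar y_j)}$ with $c_j\ge 0$. Defining $\bar\lambda_j := c_j(|\bar x_j - \bar y_j|^p + \beta)$ (note each factor is strictly positive since $\beta > 0$ and $\bar x_j \neq \bar y_j$ whenever $\Psi_{\bar q}(\bar x_j,\bar y_j) = 1$ with $\bar q$ continuous), the claimed formula for~$\bar\gamma$ is immediate. Computing the marginals using $\bar\gamma \in \Pi(\bar\nu^+, \bar\nu^-)$ gives $\bar\nu^+ = \sum_j c_j \delta_{\bar x_j}$, $\bar\nu^- = \sum_j c_j \delta_{\bar y_j}$ and hence $\bar\nu = \sum_j c_j(\delta_{\bar x_j} - \delta_{\bar y_j}) = \sum_j \bar\lambda_j\, \mathcal{D}_\beta(\bar x_j, \bar y_j)$. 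The identity $\mathcal{W}^\beta_p(\bar\nu) = \sum_j \bar\lambda_j$ then follows directly from~\eqref{eq:altrepofW} by integrating $|x-y|^p + \beta$ against the atomic $\bar\gamma$. Finally, adding the two decompositions of $\bar\mu-\bar\nu$ and $\bar\nu$ produces the representation~\eqref{eq:sparsesolstruc} of $\bar\mu$.

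There is essentially no obstacle here beyond being careful with signs; the nontrivial content (the support conditions together with the characterization of extremal points) has already been established in Theorem~\ref{thm:firstorder} and Theorem~\ref{thm:masspenalizedext}. The only mild point worth checking is that mutual singularity of $\bar\nu^+$ and $\bar\nu^-$ is compatible with the atomic expressions above: if among the index $j$ some points coincide the representation should be interpreted by aggregating the corresponding coefficients, which does not affect any of the stated identities since everything is written as sums indexed by $j$.
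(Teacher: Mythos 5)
Your proposal is correct and follows essentially the same route as the paper: both arguments read off the atomic structure of $\bar{\gamma}$ and of $\bar{\mu}-\bar{\nu}$ from the support conditions in Theorem~\ref{thm:firstorder}(3) and then identify the coefficients. The only (harmless) divergence is in the final identity, where you obtain $\mathcal{W}^\beta_p(\bar{\nu})=\sum_j\bar{\lambda}_j$ directly from~\eqref{eq:altrepofW} by integrating $|x-y|^p+\beta$ against the atomic $\bar{\gamma}$, whereas the paper instead uses $\langle\bar{q},\bar{\nu}\rangle=\mathcal{W}^\beta_p(\bar{\nu})$ together with a convexity/one-homogeneity upper bound; your version is, if anything, more direct.
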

\begin{proof}
By assumption and Theorem~\ref{thm:firstorder}, we have~$\supp \bar{\gamma}\subset\left\{(\bar{x}_j,\bar{y}_j)\right\}^{\bar{N}_2}_{j=1}$, i.e., there are coefficients~$\bar{\lambda}_j \geq 0$,~$j=1,\dots,\bar{N}_2$, with
\[
   \bar{\gamma}= \sum^{\bar{N}_2}_{j=1}\bar\lambda_j\frac{\delta_{(\bar x_j,\bar y_j)}}{|{\bar{x}_j}-{\bar{y}_j}|^p+\beta} \quad \text{and thus} \ ~\bar{\nu}=\sum^{\bar{N}_2}_{j=1}\bar\lambda_j \mathcal{D}_\beta(\bar{x}_j, \bar{y}_j)
\]
since~$\bar{\gamma}\in\Pi(\bar{\nu}^+,\bar{\nu}^-)$. Now, again invoking Theorem~\ref{thm:firstorder} as well as~$\supp(\bar{\mu}-\bar{\nu})^{\pm} \subset \left\{\bar{z}_i\right\}^{\bar{N}_1}_{i=1}$, yields coefficients~$\bar{\zeta}_i \geq 0$,~$i=1,\dots,\bar{N}_1$, with
\begin{equation} \label{eq:repofdiff}
   \alpha( \bar{\mu}-\bar{\nu} )= \sum^{\bar{N}_1}_{i=1} \bar{q}(\bar{z}_i)\bar\zeta_i \delta_{\bar{z}_i} \quad \text{which implies} \ ~|\bar{\mu}-\bar{\nu}|(\Omega)=\sum^{\bar{N}_1}_{i=1} \bar \zeta_i
\end{equation}
as well as~\eqref{eq:sparsesolstruc}.
Finally, see the proof of Theorem~\ref{thm:firstorder}, recall that~$\langle \bar{q},\bar{\nu}\rangle=\mathcal{W}^\beta_p(\bar{\nu})$ as well as
\[
    \mathcal{W}^\beta_p \left(\nu \right)=1 \quad \text{for all}~\nu= \frac{\delta_x-\delta_y}{|x-y|^p+\beta},~(x,y)\in \Omega \times \Omega.
\]
Consequently, we have
\[
\sum^{\bar{N}_2}_{j=1} \bar{\lambda}_j=\langle\bar{q},\bar{\nu}\rangle=\mathcal{W}^\beta_p(\bar{\nu}) \leq \sum^{\bar{N}_2}_{j=1} \bar{\lambda}_j
\]
where the inequality follows from the convexity and positive one-homogeneity of~$\mathcal{W}^\beta_p$.
\end{proof}

% \begin{cor}
% Assume a dipole is inserted, that is, there are $x,y$ are such that
% \[\frac{|\bar{q}(x) - \bar{q}(y)|}{|x - y|^p+\beta} \geq \sup_{z \in \Omega} |\bar q(z)|.\]
% Then we can use the mean value theorem to find $\lambda \in (0,1)$ such that
% \[|\nabla \bar{q}\big(y+\lambda (x-y)\big)| |y-x|\geq \nabla \bar{q}\big(y+\lambda (x-y)\big) \cdot (y-x) \geq \sup_{z \in \Omega} |\bar q(z)| \left(|x-y|^p+\beta\right),\]
% which in turn implies
% \[H(|x-y|):=\frac{1}{|x-y|^{1-p}}+\frac{\beta}{|x-y|} \leq \frac{\sup_{z \in \Omega} |\nabla\bar q(z)|}{\sup_{z \in \Omega} |\bar q(z)|}.\]
% The function $H$ is stricly decreasing, so we can invert it to obtain the bound
% \begin{equation}\label{eq:distlowerboundH}
% |x-y| \geq H^{-1}\left(\frac{\sup_{z \in \Omega} |\nabla\bar q(z)|}{\sup_{z \in \Omega} |\bar q(z)|}\right),\end{equation}
% where $H^{-1}$ is again strictly decreasing. 
% \end{cor}

\subsection{Algorithmic solution}
\label{subsec:algo}
This section is devoted to describing the application of an~\textit{accelerated generalized conditional gradient method} (AGCG) to Problem~\eqref{def:problem}, for which we abbreviate \[B:=\{\mu \,|\, \|\mu\|_{\KR_p^{\alpha,\beta}}\leq 1\}.\] The AGCG algorithm for non-smooth minimization, see~\cite{linconv} for the abstract algorithm in general Banach spaces, relies on the characterization of the extremal points and alternates between the update of a finite set of extremal points~$\mathcal{A}_k$ as well as of an iterate~$\mu_k$ in~$\operatorname{cone}(\mathcal{A}_k)$, the convex cone spanned by~$\mathcal{A}_k$. Complexity-wise, every iteration of AGCG requires the solution of two subproblems: The minimization of a linear functional over~$\Ext(B)$, to update~$\mathcal{A}_k$, as well as the solution of a finite-dimensional, constrained minimization problem to improve the iterate~$\mu_k$. While the latter can be done by standard methods, e.g. FISTA, interior point, or generalized Newton methods, we show that the former is equivalent to solving two finite-dimensional, non-convex minimization problems. Moreover, based on the abstract results in~\cite{linconv}, we present sufficient non-degeneracy conditions for the (fast) convergence of AGCG for~\eqref{def:problem}.

\subsubsection{Description of the AGCG method}
For a finite, ordered set of extremal points~$\mathcal{A}=\{\mu_j\}^N_{j=1}\subset \Ext(B)$ consider the finite-dimensional problem
\begin{equation} \label{def:subprop}
 \min_{\lambda \in \R_+^{N}} F\left(\sum_{j=1}^{N} \lambda_j K \mu_j\right) + \sum_{j=1}^{N} \lambda^j \tag{$\mathcal{P}(\mathcal{A})$} 
\end{equation}
where~$\R_+^{N}$ denotes the cone of componentwise non-negative vectors in~$\R^N$ and $N \in \N$. 
The AGCG method relies on the iterative update of the active set~$\mathcal{A}_k:=\{\mu^k_j\}^{N_k}_{j=1}$ as well as of an iterate~$\mu_k$ satisfying
\begin{equation} \label{eq:charofit}
 \mu_k=\sum^{N_k}_{j=1} \lambda^k_j \mu^k_j, \quad \lambda^k \in \argmin \eqref{def:subprop}, \quad \lambda^k_j >0,~j=1,\dots, N_k.
\end{equation}
Its $k$-th iteration can be described as follows. Given the current iterate~$\mu_k$ in the form~\eqref{eq:charofit}, we first compute~$
q_k \in C(\Omega) $ as well as a new candidate extremal point~$\widehat{\mu}_k \in \Ext(B)$ as defined by
\[
    q_k=-K_* \nabla F(K\mu_k),\quad \langle q_k, \widehat{\mu}_k \rangle=\max_{\mu \in \Ext(B)} \langle q_k, \mu \rangle.
\]
As defined in~\cite{linconv}, the algorithm stops with~$\mu_k=\bar{\mu}$ a minimizer to~\eqref{def:problem} if~$\langle q_k, \widehat{\mu}_k \rangle\leq 1$. Otherwise,~$\widehat{\mu}_k$ is added to the active set, i.e.,
\[
    \mathcal{A}^+_k= \mathcal{A}_k \cup \{\widehat{\mu}_k\}.
\]
Then, renaming~$\mathcal{A}^+_k=\{\mu^{k,+}_j\}^{N^+_k}_{j=1}$, we find the new iterate $\mu_{k+1}$ by solving~$(\mathcal{P}(\mathcal{A}^+_k))$ and setting
\[
    \mu_{k+1}=\sum^{N^+_k}_{j=1} \lambda^{k,+}_j \mu^{k,+}_j, \quad \lambda^{k,+} \in \argmin (\mathcal{P}(\mathcal{A}^+_k)).
\]
As a final step, unnecessary extremal points, i.e. those that are assigned a zero weight, are removed from~$\mathcal{A}^+_k$ by setting
\[
   \mathcal{A}_{k+1} :=  \mathcal{A}^+_k \setminus \{\mu_j^{k,+} : \lambda^{k,+}_j = 0\}.
\]
This ensures~\eqref{eq:charofit} for~$k=k+1$.

In the following, we address the computation of the new extremal point~$\widehat{\mu}_k$ which is required for checking the convergence of the method and the update of the active set~$\mathcal{A}_k$. Introducing the auxiliary variable
\[
    \Psi_{q_k}(x,y)= \frac{{q}_k (x)-{q}_k (y)}{|x-y|^p+\beta} \quad \text{for all}~(x,y) \in\Omega\times\Omega
\]
this can be done by computing a global extremum of~$q_k$ and a global maximum of~$\Psi_{q_k}$.
\begin{lemma} \label{lem:terminationcriterion}
Let~$\mu_k$ denote the current iterate of the AGCG method and define the dual variable~$q_k=-K_*\nabla F(K\mu_k)$. Moreover set
\[
    \Psi_{q_k}(x,y)= \frac{{q}_k (x)-{q}_k (y)}{|x-y|^p+\beta} \quad \text{for all}~(x,y) \in\Omega\times\Omega.
\]
Then there holds
\begin{equation} \label{eq:one}
   \max_{z\in\Omega} |q_k(z)|\leq \alpha,~\max_{(x,y)\in\Omega\times \Omega} \Psi_{q_k}(x,y)\leq 1  \ \Leftrightarrow \ \max_{\mu \in \Ext(B) } \langle q_k, \mu \rangle \leq 1.
\end{equation}
\end{lemma}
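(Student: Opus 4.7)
The plan is to unpack the right-hand side of~\eqref{eq:one} using the explicit description of $\Ext(B)$ furnished by Theorem~\ref{thm:KRextremals}, and then observe that the bound over all of $\Omega\times\Omega$ on the left-hand side follows for free from the pointwise bound on $q_k$.

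First I would split the maximum on the right as
\[
\max_{\mu \in \Ext(B)} \langle q_k,\mu\rangle = \max\!\left\{\,\max_{x \in \Omega,\,\sigma\in\{-1,1\}}\sigma\frac{q_k(x)}{\alpha},\; \sup_{\substack{x,y \in \Omega\\ 0<|x-y|^p<2\alpha-\beta}} \Psi_{q_k}(x,y)\,\right\}
\]
by evaluating $\langle q_k,\cdot\rangle$ on each of the two families of extremals in~\eqref{eq:KRextremals}. The first term equals $\max_{x\in\Omega} |q_k(x)|/\alpha$. Continuity of $q_k = -K_*\nabla F(K\mu_k)$ on the compact set $\Omega$, which holds because $K$ is weak*-to-strong continuous, ensures all suprema are attained. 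Thus $\max_{\mu\in\Ext(B)}\langle q_k,\mu\rangle\leq 1$ is equivalent to the two simultaneous inequalities $\max_z |q_k(z)|\leq\alpha$ and $\Psi_{q_k}(x,y)\leq 1$ for every admissible pair $(x,y)$ with $0<|x-y|^p<2\alpha-\beta$.

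The forward implication in~\eqref{eq:one} is then immediate, since taking the maximum of $\Psi_{q_k}$ over the smaller set (admissible pairs) is bounded by the maximum over the larger set $\Omega\times\Omega$. The only substantive step is the converse, for which I would separately handle the pairs excluded by the admissibility restriction. For $x=y$ the numerator in $\Psi_{q_k}$ vanishes and $\beta>0$ gives $\Psi_{q_k}(x,y)=0\leq 1$. For $(x,y)$ with $|x-y|^p\geq 2\alpha-\beta$, the previously established bound $\max_z |q_k(z)|\leq\alpha$ yields $|q_k(x)-q_k(y)|\leq 2\alpha$, whence
\[
\Psi_{q_k}(x,y) \leq \frac{2\alpha}{|x-y|^p+\beta} \leq \frac{2\alpha}{(2\alpha-\beta)+\beta} = 1.
\]
Combined with the restricted bound for admissible pairs, this delivers $\max_{(x,y)\in\Omega\times\Omega}\Psi_{q_k}(x,y)\leq 1$ and completes the proof.

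I do not anticipate a real obstacle here: the only thing to notice is that the admissibility constraint $|x-y|^p<2\alpha-\beta$ on the dipoles in $\Ext(B)$ is the exact threshold at which the triangle-like estimate above saturates, so the characterization of $\Ext(B)$ dovetails perfectly with the pointwise dual variable. Hence the non-degeneracy conditions appearing in Theorem~\ref{thm:firstorder} coincide with the algorithmic termination criterion, as required for AGCG.
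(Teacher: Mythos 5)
Your proposal is correct and follows essentially the same route as the paper: decompose $\max_{\mu\in\Ext(B)}\langle q_k,\mu\rangle$ over the two families of extremal points, get the forward implication from the inclusion of the restricted pairs in $\Omega\times\Omega$, and handle the excluded pairs $|x-y|^p\geq 2\alpha-\beta$ in the converse via $|q_k(x)-q_k(y)|\leq 2\alpha$ so that $\Psi_{q_k}(x,y)\leq 2\alpha/(\beta+|x-y|^p)\leq 1$. The only (harmless) additions are the explicit treatment of the diagonal $x=y$ and the remark on attainment of the suprema.
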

\begin{proof}
Note that
\begin{equation}\begin{aligned} \label{eq:estforlin}
    \max_{\mu \in \Ext(B) } \langle q_k, \mu \rangle &= \max \left\{\max_{z \in \Omega} \pm \langle q_k,\delta_z/\alpha \rangle,~ \sup_{\substack{(x,y)\in \Omega \times \Omega,\\|x-y|<2\alpha-\beta}} \langle q_k, \mathcal{D}_\beta(x,y)\rangle\right\} \\ &= \max \left\{\max_{z \in \Omega} \frac{|q_k(z)|}{\alpha},~ \sup_{\substack{(x,y)\in \Omega \times \Omega,\\|x-y|^p<2\alpha-\beta}} \Psi_{q_k}(x,y)\right\}
    \\ & \leq \max \left\{\max_{z \in \Omega} \frac{|q_k(z)|}{\alpha},~ \max_{(x,y)\in \Omega \times \Omega} \Psi_{q_k}(x,y)\right\}.
\end{aligned}\end{equation}
This immediately gives the~``$\Rightarrow$'' direction in~\eqref{eq:one}. Now assume that~$\max_{\mu \in \Ext(B) } \langle q_k, \mu \rangle \leq 1$. Then, due to the second equality in~\eqref{eq:estforlin}, we have
\[
    \max_{z\in\Omega} |q_k(z)|\leq \alpha,~\sup_{\substack{(x,y)\in \Omega \times \Omega,\\|x-y|^p<2\alpha-\beta}} \Psi_{q_k}(x,y)\leq 1 .
\]
Moreover, using~$\max_{z\in\Omega} |q_k(z)|\leq \alpha$, we conclude
\begin{align*}
    \sup_{\substack{(x,y)\in \Omega \times \Omega,\\|x-y|^p \geq 2\alpha-\beta}} \Psi_{q_k}(x,y) \leq \sup_{\substack{(x,y)\in \Omega \times \Omega,\\|x-y|^p \geq 2\alpha-\beta}} \frac{2\alpha}{\beta+|x-y|^p}\leq 1.
\end{align*}
Combining both observations, finishes the proof of the~``$\Leftarrow$'' direction.
\end{proof}
\begin{prop}
 \label{prop:sollinear}
 Let~$q_k$ and~$\Psi_{q_k}$ be defined as in~Lemma~\ref{lem:terminationcriterion} and assume that
 \[
 \max_{\mu\in\Ext(B)} \langle q_k,\mu \rangle >1.
 \]
 Moreover, let~$z_k\in \Omega$ and~$(x_k,y_k) \in \Omega \times \Omega$ be such that
 \[
      z_k \in \argmax_{z\in\Omega} |q_k(z)|,~(x_k,y_k) \in \argmax_{(x,y)\in\Omega\times \Omega} \Psi_{q_k}(x,y).
 \]
 Finally set
 \[
     \widehat{\mu}_k:= \sign(q_k(z_k))\frac{\delta_{z_k}}{\alpha} \quad \text{if} \quad \max_{z\in\Omega} |q_k(z)|\geq \max_{(x,y)\in\Omega\times \Omega} \Psi_{q_k}(x,y)
 \]
 and~$\widehat{\mu}_k:=\mathcal{D}_\beta(x_k,y_k)$, otherwise. Then there holds~$\widehat{\mu}_k \in \Ext(B)$ and
 \begin{equation}\label{def:linprob}
    \langle q_k,\widehat{\mu}_k \rangle= \max_{\mu\in\Ext(B)} \langle q_k,\mu \rangle.
 \end{equation}
\end{prop}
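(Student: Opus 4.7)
The plan is to leverage the decomposition of $\Ext(B)$ provided by Theorem~\ref{thm:KRextremals} to reduce the maximization on the right-hand side of~\eqref{def:linprob} to a comparison between the two finite-dimensional optima appearing in the second line of~\eqref{eq:estforlin}: the Dirac optimum $\max_{z \in \Omega} |q_k(z)|/\alpha$ and the dipole optimum $\sup\{\Psi_{q_k}(x,y) : (x,y) \in \Omega \times \Omega,~|x-y|^p < 2\alpha-\beta\}$. The candidate $\widehat{\mu}_k$ is built precisely to realize whichever of these is larger.

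For the Dirac branch, the candidate $\sign(q_k(z_k))\delta_{z_k}/\alpha$ lies in $\Ext(B)$ directly by Theorem~\ref{thm:KRextremals}, and the pairing $\langle q_k, \widehat{\mu}_k\rangle = |q_k(z_k)|/\alpha$ is precisely the Dirac optimum. The selection condition then guarantees that this value dominates the dipole optimum, so it realizes the full maximum in~\eqref{def:linprob}.

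The subtler part is the dipole branch. Extremality of $\mathcal{D}_\beta(x_k, y_k)$ requires verifying both $x_k \neq y_k$ and the admissibility constraint $|x_k - y_k|^p < 2\alpha-\beta$ from the characterization~\eqref{eq:KRextremals}. The former is automatic, since $\Psi_{q_k}$ vanishes on the diagonal $\{x=y\}$, while under the standing hypothesis $\max_{\mu \in \Ext(B)} \langle q_k, \mu\rangle > 1$ one necessarily has $\Psi_{q_k}(x_k, y_k) > 0$. For the latter I would argue by contradiction, using the elementary bound
\[
\Psi_{q_k}(x,y) = \frac{q_k(x) - q_k(y)}{|x-y|^p + \beta} \leq \frac{2\max_{z\in\Omega} |q_k(z)|}{\beta + |x-y|^p},
\]
which on the set $|x-y|^p \geq 2\alpha-\beta$ is bounded above by $\max_z |q_k(z)|/\alpha$. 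Hence if the unrestricted maximizer $(x_k, y_k)$ were to satisfy $|x_k - y_k|^p \geq 2\alpha - \beta$, the dipole value could not strictly exceed the Dirac value, contradicting the selection rule that placed us in the dipole branch.

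Once admissibility of $(x_k, y_k)$ is established, the unrestricted maximum of $\Psi_{q_k}$ coincides with the restricted supremum appearing in~\eqref{eq:estforlin}, so $\langle q_k, \widehat{\mu}_k \rangle = \Psi_{q_k}(x_k, y_k)$ equals the dipole optimum and, by the selection rule, dominates the Dirac optimum, yielding~\eqref{def:linprob}. The main obstacle in the argument is precisely this admissibility check: a priori the global maximizer of $\Psi_{q_k}$ over $\Omega \times \Omega$ could land outside the extremal set, and the role of the selection rule is exactly to rule this out via the two-sided bound above. All remaining steps are routine verifications against the decomposition in~\eqref{eq:estforlin}.
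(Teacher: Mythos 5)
Your proposal is correct and follows essentially the same route as the paper: reduce via \eqref{eq:estforlin} to comparing the Dirac optimum $\max_{z}|q_k(z)|/\alpha$ with the dipole optimum, and in the dipole branch establish admissibility $|x_k-y_k|^p<2\alpha-\beta$ from the two-sided bound $q_k(x_k)-q_k(y_k)\leq 2\max_z|q_k(z)|$ combined with the strict selection inequality, exactly as in the paper's contradiction-free rearrangement of the same estimate. Your explicit check that $x_k\neq y_k$ (via $\Psi_{q_k}$ vanishing on the diagonal) is a small addition the paper leaves implicit, and your use of the $/\alpha$-normalized comparison is the consistent reading of the selection rule as it appears in Algorithm~\ref{alg:accgcg}.
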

 \begin{proof}
 First note that~$q_k \neq 0$ by assumption. Then, using~\eqref{eq:estforlin}, we arrive at
 \[
 \max_{\mu \in \Ext(B) } \langle q_k, \mu \rangle = \max \left\{\max_{z \in \Omega} | q_k(z)|,~ \sup_{\substack{(x,y)\in \Omega \times \Omega,\\|x-y|^p<2\alpha-\beta}} \Psi_{q_k}(x,y)\right\}
 \]
 Hence, if~$\max_{z\in\Omega} |q_k(z)|\geq \max_{(x,y)\in\Omega\times \Omega} \Psi_{q_k}(x,y)$, there holds
 \[
     \max_{\mu\in\Ext(B)} \langle q_k,\mu \rangle= \max_{z\in\Omega} |q_k(z)|= \langle q_k, \sign(q_k(z_k))\delta_{z_k}\rangle
 \]
 and~$\widehat{\mu}_k= \sign(q_k(z_k))\delta_{z_k}/\alpha \in \Ext(B)$ satisfies~\eqref{def:linprob}.
 
 Now assume that
 \begin{equation}\label{eq:poss2}
     \max_{z\in\Omega} |q_k(z)|< \max_{(x,y)\in\Omega\times \Omega} \Psi_{q_k}(x,y).
 \end{equation}
 Again using~\eqref{eq:estforlin}, there holds
 \[
     \max_{\mu\in\Ext(B)} \langle q_k,\mu \rangle \leq \max_{(x,y)\in\Omega\times \Omega} \Psi_{q_k}(x,y)=\Psi_{q_k}(x_k,y_k)= \langle\Psi_{q_k}, \mathcal{D}_\beta(x_k,y_k) \rangle.
 \]
 Thus, to finish the proof, it suffices to show that~$\mathcal{D}_\beta(x_k,y_k)\in \Ext(B)$, i.e.,~$|x_k-y_k|^p< 2\alpha-\beta$. Due to~\eqref{eq:poss2}, we conclude
 \[
    (\beta+|x_k-y_k|^p) \max_{z\in\Omega} |q_k(z)| < q_k(x_k)-q_k(y_k) \leq 2 \max_{z\in\Omega} |q_k(z)|.
 \]
 Noting that~$\max_{z\in\Omega} |q_k(z)|>0$ finally yields the desired result,
 \end{proof}
Lemma \ref{lem:terminationcriterion} provides an explicit way to check the stopping criteria of the AGCG method by computing both $\max_{z\in\Omega} |q_k(z)|$ and $\max_{(x,y)\in\Omega\times \Omega} \Psi_{q_k}(x,y)$.
Proposition \ref{prop:sollinear} is instead allowing to compute the newly inserted extremal point at each iteration of the AGCG method. Indeed, if the stopping condition is not satisfied, i.e., $\max_{\mu\in\Ext(B)} \langle q_k,\mu \rangle >1$, then the newly inserted extremal point can be determined from $|q_k(\cdot)|$ and $\Psi_{q_k}$ as described in Proposition \ref{prop:sollinear}.

Following these considerations, the AGCG method described above is schematically summarized in Algorithm \ref{alg:accgcg}. 

\begin{algorithm}[t]
\begin{flushleft}
\hspace*{\algorithmicindent}\textbf{Input:}~$\mathcal{A}_0=\{\mu^0_j\}^{N_0}_{j=1}\subset \operatorname{Ext}(B)$.
\\
\hspace*{\algorithmicindent} \textbf{Output:} Minimizer~$\bar{\mu}$ to~\eqref{def:problem}.
\end{flushleft}
\begin{algorithmic}
\STATE 1. Find~$\lambda^0$ by solving~$(\mathcal{P}(\mathcal{A}_0))$ and update
\[
\mu_1:=\sum^{N_0}_{j=1} \lambda^0_j \mu^0_j,~\mathcal{A}_1:= \mathcal{A}_0 \setminus \left\{\,\mu^0_j\;|\;\lambda^0_j=0\,\right\}.
\]

\FOR {$k=1,2,\dots$}
\vspace*{0.3em}
\STATE 2. Given the current iterate~$\mu_k$ and active set~$\mathcal{A}_k=\{\mu^k_j\}^{N_k}_{j=1}$ compute
\[
    q_k=-K_*\nabla F(K\mu_k),\quad \Psi_{q_k}(x,y)= \frac{{q}_k (x)-{q}_k (y)}{|x-y|^p+\beta} \quad \text{for all}~(x,y) \in\Omega\times\Omega
\]
as well as~$z_k \in \Omega$ and~$(x_k,y_k) \in\Omega\times\Omega$ with
\[
    z_k \in \argmax_{z\in\Omega} |q_k(z)|,~(x_k,y_k) \in \argmax_{(x,y)\in\Omega\times \Omega} \Psi_{q_k}(x,y)
\]

\IF{$\max_{z\in\Omega} |q_k(z)|\leq \alpha,~\max_{(x,y)\in\Omega\times \Omega} \Psi_{q_k}(x,y)\leq 1$}
\vspace*{0.3em}
\STATE 3. Terminate with~$ \bar{\mu}=\mu_k$ a stationary point to~\eqref{def:problem}.
\vspace*{0.3em}
\ELSIF{$\max_{z\in\Omega} |q_k(z)|/\alpha \geq \max_{(x,y)\in\Omega\times \Omega} \Psi_{q_k}(x,y)$}
\vspace*{0.3em}
\STATE 4. Define~$\widehat{\mu}_k=\sign(q_k(z_k))\delta_{z_k}/\alpha$.
\ELSIF{$\max_{z\in\Omega} |q_k(z)|/\alpha < \max_{(x,y)\in\Omega\times \Omega} \Psi_{q_k}(x,y)$}
\STATE 5. Define~$\widehat{\mu}_k=\mathcal{D}_\beta(x_k,y_k)$.
\ENDIF

\vspace*{0.3em}
\STATE 6. Update the active set~$\mathcal{A}^+_k:=\mathcal{A}_k \cup \{\widehat{\mu}_k\}$.
\STATE 5. Denoting~$\mathcal{A}^+_k=\{\mu^{k,+}\}^{N^+_k}_{j=1}$, find~$\lambda^{k,+}$ by solving~$(\mathcal{P}(\mathcal{A}^+_k))$ and update
\[
\mu_{k+1}:=\sum^{N^+_k}_{j=1} \lambda^k_j \mu^{k,+}_j,~\mathcal{A}_{k+1}:= \mathcal{A}_k \setminus \left\{\,\mu^{k,+}_j\;|\;\lambda^{k,+}_j=0\,\right\}
\]
and increment~$k=k+1$.
\ENDFOR
\end{algorithmic}
\caption{Solution algorithm for~\eqref{def:problem}}
\label{alg:accgcg}
\end{algorithm}
 
\begin{rem}\label{rem:subproblems}
We point out that the functions~$|q_k(\cdot)|$ and~$\Psi_{q_k}$ are in general non-concave. Thus, in practice, computing their global maxima exactly can be infeasible. However, strategies based on multi-start gradient descent and heuristic rules have been successfully used and are widely accepted for minimization problems with total variation norm regularization, see e.g. \cite{bredies2013inverse, boyd2017alternating} and more general regularization functionals, see e.g. \cite{IglWal22,bredies2022generalized}. In this paper we use a basin-hopping-type algorithm \cite{wales1997global} whose performance is enough to compute the maximum of $|q_k(\cdot)|$ and~$\Psi_{q_k}$ efficiently and with satisfactory accuracy.
\end{rem}

\subsubsection{Sublinear convergence}\label{subsubsec:sublinear}
%In this section we adapt the results in \cite{linconv} to prove sublinear convergence for Algorithm \ref{alg:accgcg}.
\begin{thm}\label{thm:sublinear}
Under the assumptions of the beginning of Section \ref{sec:minimizationproblem}, either Algorithm \ref{alg:accgcg} terminates after a finite number of steps outputting a minimizer of \eqref{def:problem}, or, denoting by $\mu_k$ the sequence generated by Algorithm \ref{alg:accgcg}, there exists a constant $C>0$ such that 
\[
J(\mu_k) - \inf_{\mu \in \mathcal{M}(\Omega)} J(\mu) \leq \frac{C}{1 + k}
\]
for all $k \in \N$. Moreover,  the generated  sequence $\mu_k$ admits at least a weak* accumulation point and every of such accumulation points are minimizers for \eqref{def:problem}. Finally, if \eqref{def:problem} has an unique minimizer $\bar \mu$, then the generated sequence $\mu_k$ converges weakly* to $\bar \mu$ as $k\rightarrow \infty$.
\end{thm}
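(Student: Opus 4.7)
The plan is to reduce Theorem \ref{thm:sublinear} to the abstract convergence results for the accelerated generalized conditional gradient method established in \cite{linconv}, whose hypotheses I would verify hold for Problem \eqref{def:problem}. In that abstract framework one considers a minimization problem $J = f + G$ with $f$ convex, Fr\'echet differentiable and with Lipschitz gradient on bounded sets, and $G$ a proper, convex, positively one-homogeneous, weak* lower semicontinuous regularizer whose unit sublevel set $B$ is weak* compact and coincides with the closed convex hull of $\Ext(B)$; additionally, the linear oracle $\sup_{\mu \in \Ext(B)} \langle q, \mu \rangle$ must be attained for every $q \in \mathcal{C}(\Omega)$.

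First, I would check these structural assumptions for our problem. The regularizer $G = \|\cdot\|_{\KR_p^\ab}$ is one-homogeneous by definition and convex as the infimal convolution of two convex one-homogeneous functionals; it is weak* lsc by Lemma \ref{lem:lscKR}; and the sublevel set $B$ is weak* compact and is the closed convex hull of its extremal points by Lemma \ref{lem:compKR}. On the smooth side, $f = F \circ K$ is convex because $F$ is convex and $K$ linear, Fr\'echet differentiable with $\nabla f(\mu) = K_* \nabla F(K \mu)$, and since $K$ is weak*-to-strong continuous and $\nabla F$ is Lipschitz on compacts, $\nabla f$ is Lipschitz on weak*-bounded sets. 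Attainment of the linear oracle follows from Proposition \ref{prop:sollinear}: the supremum reduces to maximizing $|q|$ on the compact set $\Omega$ and $\Psi_q$ on the compact set $\Omega \times \Omega$, both of which admit maximizers by continuity of $q$ and the strict positivity of the denominator $\beta + |x-y|^p$.

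Once these assumptions are matched, the $O(1/k)$ rate $J(\mu_k) - \inf J \leq C/(1+k)$ follows directly from the abstract sublinear convergence theorem in \cite{linconv}. For the accumulation point statement, the coercivity bound \eqref{eq:coercivity} together with boundedness of $J(\mu_k)$ yields a uniform total variation bound on $(\mu_k)$; by the Banach-Alaoglu theorem the sequence admits weak* accumulation points, and the weak* lower semicontinuity of $J$ (which combines Lemma \ref{lem:lscKR} with weak*-to-strong continuity of $K$ and continuity of $F$) immediately implies that any such accumulation point is a minimizer. Finally, if the minimizer $\bar{\mu}$ is unique, every weak*-convergent subsequence has limit $\bar{\mu}$; a standard subsubsequence argument then gives weak* convergence of the whole sequence.

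The main obstacle is essentially bookkeeping rather than analysis: it is a matter of carefully translating the hypotheses of the abstract framework in \cite{linconv} into the present setting, and this translation is deferred to Appendix \ref{sec:appendix} as announced in the outline. All the nontrivial analytic inputs --- weak* lower semicontinuity and coercivity of $\|\cdot\|_{\KR_p^\ab}$, the Krein-Milman representation of $B$, and attainability of the linear oracle over $\Ext(B)$ --- have already been established in Section \ref{sec:extremals} and in Lemma \ref{lem:terminationcriterion}--Proposition \ref{prop:sollinear} above, so no further hard estimates are required.
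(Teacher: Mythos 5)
Your proposal is correct and follows essentially the same route as the paper, which proves Theorem \ref{thm:sublinear} by a direct application of the abstract sublinear convergence result \cite[Theorem 3.4]{linconv}; the hypothesis verification you spell out (weak* lower semicontinuity and coercivity of the norm, Krein--Milman structure of $B$, attainability of the linear oracle) is exactly the content the paper relies on implicitly via Lemmas \ref{lem:lscKR}, \ref{lem:compKR} and Proposition \ref{prop:sollinear}. The only minor inaccuracy is that Appendix \ref{sec:appendix}, despite its title, collects the auxiliary results for the \emph{linear} convergence Theorem \ref{thm:linear}, not for the sublinear statement, whose proof in the paper is just the one-line citation.
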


\begin{proof}
The proof follows from a direct application of \cite[Theorem 3.4]{linconv} to Algorithm \ref{alg:accgcg}.
\end{proof}

\subsubsection{Linear convergence} \label{subsubsec:linear}
In this section, we prove that Algorithm~\ref{alg:accgcg} eventually converges linearly to a minimizer of Problem~\eqref{def:problem}. Reminiscent to Section~\ref{subsubsec:sublinear}, this statement will be derived by adapting the abstract convergence results from~\cite{linconv} to the current setting. Since this is rather technical, we omit the required proofs at this point and collect them in Appendix \ref{sec:appendix}. Through this section, we assume the following: 
\begin{ass}\label{ass:linearass}
There holds:
\begin{itemize}
    \item[$(\mathbf{B1})$] The functional~$F$ is strongly convex around the unique optimal observation~$\bar{y}$, i.e., there is a neighborhood~$\mathcal{N}(\bar{y})$ and~$\theta>0$ with
    \[
        (\nabla F(y_1)-\nabla F(y_2),y_1 -y_2)_{(Y^\ast, Y)} \geq \theta \|y_1 -y_2\|^2_Y \quad \text{for all}~y_1,y_2 \in \mathcal{N}(\bar{y}).
    \]
    \item [$(\mathbf{B2})$] There are~$\bar{N}_1,~\bar{N}_2 \in \mathbb{N} $,~$\bar{N}=\bar{N}_1+\bar{N}_2 >0$, as well as finite sets~$\left\{\bar{z}_i\right\}^{\bar{N}_1}_{i=1}\subset \operatorname{int} \Omega$ and~$\left\{(\bar{x}_j,\bar{y}_j)\right\}^{\bar{N}_2}_{j=1}\subset \operatorname{int} \Omega\times \operatorname{int}\Omega$ with
\[
  \left\{\,z \in \Omega\;|\;\bar{q}(z)=\pm \alpha\,\right\}=\left\{\bar{z}_i\right\}^{\bar{N}_1}_{i=1},\quad \left\{\,(x,y) \in \Omega \times \Omega\;|\;\Psi_{\bar{q}}(x,y)= 1\,\right\}=\left\{(\bar{x}_j,\bar{y}_j)\right\}^{\bar{N}_2}_{j=1}.
\]
and~$0<|\bar{x}_j-\bar{y}_j|^p< 2\alpha-\beta$.
    \item[$(\mathbf{B3})$] For every~$y\in Y$ there holds~$K_*y \in \operatorname{Lip}(\Omega)$ and the mapping~$K_* \colon Y \to \operatorname{Lip}(\Omega)$ is continuous. Moreover, we have~$\bar{q}\in \mathcal{C}^2(\Omega)$, and
    \[
        \operatorname{det}\left( \nabla^2 \bar{q}(\bar{z}_i) \right) \neq 0, \quad \operatorname{det}\left( \nabla^2 \Psi_{\bar{q}} (\bar{x}_j,\bar{y}_j) \right) \neq 0 \quad \text{for all}~i=1, \dots, \bar{N}_1,~j=1, \dots, \bar{N}_2.
    \]
    \item[$(\mathbf{B4})$] The set $\left\{K\left(\delta_{\bar{z}_i}\right)\right\}^{\bar{N}_1}_{i=1} \cup  \left\{K\left(\mathcal{D}_\beta(\bar{x}_j, \bar{y}_j)\right)\right\}^{\bar{N}_2}_{j=1}$ is linearly independent.
\end{itemize}
\end{ass}
\begin{rem}
By Theorem~\ref{thm:firstorder}, there holds~$|\bar{q}(z)|\leq \alpha$ and~$\Psi_{\bar{q}}(x,y)\leq 1$ for all~$z \in \Omega$ and~$(x,y) \in \Omega \times \Omega$. As a consequence, every~$(\bar{x},\bar{y})$ with~$\Psi_{\bar{q}}(\bar{x},\bar{y})=1$ necessarily satisfies 
\begin{align*}
\beta+|\bar{x}-\bar{y}|^p=\bar{q}(\bar{x})-\bar{q}(\bar{y}) \leq 2 \alpha,
\end{align*}
so Assumption~$(\mathbf{B2})$ requires only that in addition this inequality is strict.
\end{rem}
These assumptions imply the existence of a unique and sparse solution to~\eqref{def:problem}.
\begin{lemma} \label{lem:uniqueness}
Assumptions~$(\mathbf{B2})$ and $(\mathbf{B4})$ imply that the solution~$\bar{\mu}$ to~\eqref{def:problem} is unique and of the form
\[
    \bar{\mu}=\sum^{\bar{N}_1}_{i=1} (\bar{q}(\bar{z}_i)/\alpha)\bar\zeta_i \delta_{\bar{z}_i}+\sum^{\bar{N}_2}_{j=1} \bar\lambda_j \mathcal{D}_\beta(\bar{x}_j, \bar{y}_j)
\]
for some coefficients~$\bar{\zeta}_i, \bar{\lambda}_j \geq 0$,~$i=1,\dots,\bar{N}_1$,~$j=1,\dots,\bar{N}_2$.
\end{lemma}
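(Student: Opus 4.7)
The plan is to combine Proposition~\ref{e} with a linear-independence argument furnished by $(\mathbf{B4})$. First, I would observe that the strict convexity of $F$, assumed at the beginning of Section~\ref{sec:minimizationproblem}, forces the observation $K\bar\mu$ to coincide across every minimizer of~\eqref{def:problem}; consequently the dual variable $\bar q = -K_\ast \nabla F(K\bar\mu)$, the function $\Psi_{\bar q}$, and both active sets in $(\mathbf{B2})$ are uniquely determined by the problem data. In particular, $(\mathbf{B2})$ applies simultaneously to every minimizer and the finite collections $\{\bar z_i\}_{i=1}^{\bar N_1}$ and $\{(\bar x_j,\bar y_j)\}_{j=1}^{\bar N_2}$ do not depend on the choice of solution.

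Next, I would invoke Proposition~\ref{e} for an arbitrary minimizer $\bar\mu$. Its hypotheses are met thanks to $(\mathbf{B2})$, and its conclusion is exactly the structural representation
\[
\bar{\mu}=\sum_{i=1}^{\bar{N}_1} \frac{\bar{q}(\bar{z}_i)}{\alpha}\bar\zeta_i \delta_{\bar{z}_i} + \sum_{j=1}^{\bar{N}_2} \bar\lambda_j\, \mathcal{D}_\beta(\bar{x}_j,\bar{y}_j)
\]
with coefficients $\bar\zeta_i,\bar\lambda_j\geq 0$ required by the lemma. Together with the fact that $|\bar{x}_j-\bar{y}_j|^p<2\alpha-\beta$ from $(\mathbf{B2})$ (so that each dipole belongs to $\mathrm{Ext}(B)$ by Theorem~\ref{thm:KRextremals}), this already yields the stated form.

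To upgrade this to uniqueness of the measure itself, I would compare two minimizers $\bar\mu^1, \bar\mu^2$ with coefficients $(\bar\zeta_i^\ell,\bar\lambda_j^\ell)$ for $\ell=1,2$. Since $K\bar\mu^1 = K\bar\mu^2$ by the first step, applying $K$ to the difference gives
\[
\sum_{i=1}^{\bar{N}_1} \frac{\bar{q}(\bar{z}_i)}{\alpha}(\bar\zeta_i^1-\bar\zeta_i^2)\, K\delta_{\bar{z}_i} + \sum_{j=1}^{\bar{N}_2}(\bar\lambda_j^1-\bar\lambda_j^2)\, K\mathcal{D}_\beta(\bar{x}_j,\bar{y}_j) = 0.
\]
The linear independence in $(\mathbf{B4})$ forces every coefficient to vanish, and since $\bar{q}(\bar{z}_i)/\alpha = \pm 1 \neq 0$, we conclude $\bar\zeta_i^1=\bar\zeta_i^2$ and $\bar\lambda_j^1=\bar\lambda_j^2$ for all $i,j$; hence $\bar\mu^1=\bar\mu^2$.

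The only real subtlety is the preliminary observation that $\bar q$ (and therefore the two active sets) is common to all minimizers, which is precisely what strict convexity of $F$ provides. After that, the proof reduces to bookkeeping from Proposition~\ref{e} and a one-line linear-algebra argument, so I do not expect any serious obstacle.
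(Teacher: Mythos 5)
Your argument is correct and rests on the same three ingredients as the paper's proof: Proposition~\ref{e} for the structural form, strict convexity of $F$, and the linear independence in $(\mathbf{B4})$. The organization differs slightly. The paper reduces \eqref{def:problem} to the finite-dimensional program $\min_{(\zeta,\lambda)} F(K\mu(\zeta,\lambda))+\sum_i\zeta_i+\sum_j\lambda_j$ over the parametrized measures $\mu(\zeta,\lambda)$ (using the convexity bound $\|\mu(\zeta,\lambda)\|_{\KR_p^\ab}\leq\sum_i\zeta_i+\sum_j\lambda_j$ to justify the reduction) and then observes that this reduced objective is strictly convex because $(\zeta,\lambda)\mapsto K\mu(\zeta,\lambda)$ is injective by $(\mathbf{B4})$. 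You instead first record that strict convexity of $F$ forces the optimal observation $K\bar\mu$ to agree across all minimizers, so that $\bar q$ and the active sets in $(\mathbf{B2})$ are solution-independent, and then conclude by applying $(\mathbf{B4})$ directly to $K(\bar\mu^1-\bar\mu^2)=0$. The two are essentially equivalent (the strict convexity of the reduced problem is precisely the composition of your two observations), but your version is marginally more direct, sidesteps the convexity bound, and makes explicit a point the paper leaves implicit, namely that $\bar q$ is common to all minimizers so that Proposition~\ref{e} applies to each of them with the same support points $\{\bar{z}_i\}$ and $\{(\bar{x}_j,\bar{y}_j)\}$. No gap.
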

\begin{proof}
For~$(\zeta,\lambda) \in \R^{\bar{N}_1}_+ \times \R^{\bar{N}_2}_+$ define the parametrized measure
\begin{align*}
    \mu(\zeta,\lambda)=\sum^{\bar{N}_1}_{i=1} (\bar{q}(\bar{z}_i)/\alpha)\zeta_i \delta_{\bar{z}_i}+\sum^{\bar{N}_2}_{j=1} \lambda_j \mathcal{D}_\beta(\bar{x}_j, \bar{y}_j).
\end{align*}
Then, due to the convexity and one-homogeneity of $\|\cdot\|_{\KR_p^\ab}$, we have
\begin{align}
    \|\mu(\zeta,\lambda)\|_{\KR_p^\ab} \leq \sum^{\bar{N}_1}_{i=1} \zeta_i + \sum^{\bar{N}_2}_{j=1} \lambda_j \quad \text{for all}~(\zeta,\lambda) \in \R^{\bar{N}_1}_+ \times \R^{\bar{N}_2}_+ .
\end{align}
Together with Proposition~\ref{e}, we conclude that every solution to~\eqref{def:problem} is of the form~$\bar\mu= \mu(\bar{\zeta},\bar{\lambda})$ where~$(\zeta,\lambda) \in \R^{\bar{N}_1}_+ \times \R^{\bar{N}_2}_+$ is a minimizer to 
\begin{align} \label{eq:subprobfinitehelp}
    \min_{(\zeta,\lambda) \in \R^{\bar{N}_1}_+ \times \R^{\bar{N}_2}_+} \left \lbrack F(K\mu(\zeta,\lambda))+ \sum^{\bar{N}_1}_{i=1} \zeta_i + \sum^{\bar{N}_2}_{j=1} \lambda_j \right \rbrack.
\end{align}
Finally, due to the strict convexity of~$F$ as well as the linear independence assumption~$(\mathbf{B4})$, the objective functional in~\eqref{eq:subprobfinitehelp} is also strictly convex. Thus, its minimizer and, as consequence, the solution to~\eqref{def:problem}, are unique.
\end{proof}
Finally, we assume~\textit{strong complementarity}, i.e., the unique coefficients in Lemma~\ref{lem:uniqueness} are positive:
\begin{itemize}
\item [$(\mathbf{B5})$] There holds $\bar{\zeta}_i, \bar{\lambda}_j > 0$,~$i=1,\dots,\bar{N}_1$,~$j=1,\dots,\bar{N}_2$.
\end{itemize}
As a consequence of Assumptions~$(\mathbf{B1})-(\mathbf{B5})$, we have the following linear convergence result.
\begin{thm}\label{thm:linear}
Let Assumptions~$(\mathbf{B1})-(\mathbf{B5})$ hold. Then Algorithm \ref{alg:accgcg} either terminates after a finite number of steps outputting a minimizer of \eqref{def:problem}, or, denoting by $\mu_k$ the sequence generated by Algorithm \ref{alg:accgcg}, there exists a constant $C>0$ such that 
\[
J(\mu_k) - \min_{\mu \in \mathcal{M}(\Omega)} J(\mu) \leq C \zeta^k
\]
for all $k \in \N$ large enough and some~$\zeta \in(0,1)$. Moreover, there holds~$\mu_k \xrightharpoonup{\ast} \bar{\mu}$.
\end{thm}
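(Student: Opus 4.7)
The plan is to invoke the abstract linear convergence theorem of \cite{linconv} for accelerated generalized conditional gradient methods, and to translate Assumptions $(\mathbf{B1})$--$(\mathbf{B5})$ into the hypotheses that theorem requires. The abstract result yields a geometric rate once one has established: local strong convexity of the smooth part along the forward image of the relevant cone of measures, exact identification of the optimal atoms in the active set after finitely many iterations, and strong complementarity with linear independence guaranteeing that the reduced finite-dimensional problem is strongly convex. Since Theorem \ref{thm:sublinear} already gives $\mu_k \weakstar \bar\mu$ (using Lemma \ref{lem:uniqueness} to guarantee uniqueness), the rate statement is the actual content, and this is what the Appendix is expected to deliver.

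First, from weak* convergence of $\mu_k$ and weak*-to-strong continuity of $K$, we get $K\mu_k \to K\bar\mu$ in $Y$; Assumption $(\mathbf{B3})$ then transfers this to convergence of the dual variable $q_k = -K_* \nabla F(K\mu_k) \to \bar q$ in $\operatorname{Lip}(\Omega)$. Combined with $(\mathbf{B3})$'s non-degenerate Hessian conditions at $\bar z_i$ and $(\bar x_j, \bar y_j)$, the maximizers of $|\bar q|$ and $\Psi_{\bar q}$ are isolated in $\operatorname{int}\Omega$ (by $(\mathbf{B2})$), so the extremals $\widehat\mu_k$ selected in steps~4--5 of Algorithm~\ref{alg:accgcg} converge, in an appropriate sense, to elements of the finite optimal set identified in Proposition~\ref{e}. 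Standard arguments for such identification in Frank–Wolfe schemes (as in \cite{linconv}) then show that for $k$ large the active set $\mathcal{A}_k$ contains every one of the $\bar N_1+\bar N_2$ optimal atoms; strong complementarity $(\mathbf{B5})$ ensures the weights assigned by the inner subproblem $(\mathcal{P}(\mathcal{A}_k))$ remain bounded away from zero, so these atoms are not pruned.

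Once identification is achieved, the algorithm operates on the finite-dimensional cone parametrizing the optimal support. Assumption $(\mathbf{B4})$ makes the forward map injective on this cone, and combined with $(\mathbf{B1})$ yields strong convexity of the reduced objective in $(\zeta, \lambda)\in\R_+^{\bar N_1}\times \R_+^{\bar N_2}$, which is a quadratic growth condition for $J$ restricted to measures supported on the optimal atoms. The AGCG iteration then behaves asymptotically like a proximal descent on this strongly convex finite-dimensional problem, for which geometric convergence is standard.

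The main obstacle is the identification step, i.e., proving that the approximate maximizers $z_k$ and $(x_k,y_k)$ computed in step~2 of Algorithm~\ref{alg:accgcg} actually converge to the optimal $\bar z_i$ and $(\bar x_j,\bar y_j)$, and that once these are added they are retained. This requires care because the candidate $\widehat\mu_k$ is chosen via a global \emph{max} that may a priori oscillate between different optimal atoms; the non-degenerate Hessian assumption in $(\mathbf{B3})$, the interior condition of $(\mathbf{B2})$ (used to apply local quadratic expansions without boundary issues), and the bound $|\bar x_j - \bar y_j|^p < 2\alpha - \beta$ (used to guarantee that dipole candidates remain genuine extremals of $B$ in the limit) are precisely what is needed to control this. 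The technical verification, which amounts to checking each of the hypotheses of \cite[Theorem~3.5~or~similar]{linconv} for our specific setting, is deferred to Appendix \ref{sec:appendix}.
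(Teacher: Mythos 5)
Your overall strategy coincides with the paper's: both reduce Theorem~\ref{thm:linear} to the abstract linear convergence result of \cite{linconv} (the paper invokes \cite[Theorem~3.8]{linconv}) and use $(\mathbf{B1})$, $(\mathbf{B4})$, $(\mathbf{B5})$ essentially verbatim as three of its hypotheses. However, your write-up stops exactly where the actual work begins, and it also mischaracterizes what that work is. The identification and retention of the optimal atoms, and the ``asymptotic proximal descent on a finite-dimensional strongly convex problem,'' are the internal \emph{mechanism} of the abstract theorem --- they are conclusions one gets for free, not conditions you must establish. What you \emph{do} have to establish, and what the paper's appendix is entirely devoted to, is a structural condition on the dual certificate near the optimal extremal points: after showing that $\argmax_{\mu}\langle \bar q,\mu\rangle$ over the weak* closure $\mathcal{B}=\overline{\Ext(B)}^{*}$ is exactly the finite set $\{(\sign(\bar q(\bar z_i))/\alpha)\delta_{\bar z_i}\}\cup\{\mathcal{D}_\beta(\bar x_j,\bar y_j)\}$ (Lemmas~\ref{lem:charclosedness} and~\ref{lem:onlyfinite}), one must exhibit weak*-neighborhoods $U_j$ of these points inside $\mathcal{B}$ and a surrogate distance $g$ on $\Ext(B)\times\Ext(B)$ satisfying \emph{both}
\begin{equation}
\|K(\mu-\bar\mu_j)\|_Y\leq C_K\,g(\mu,\bar\mu_j)\qquad\text{and}\qquad 1-\langle\bar q,\mu\rangle\geq\theta\,g(\mu,\bar\mu_j)^2
\end{equation}
for all $\mu\in U_j\cap\Ext(B)$. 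You gesture at the second inequality (``local quadratic expansions'' from the non-degenerate Hessians of $(\mathbf{B3})$), but the first one never appears in your proposal, and it is not automatic: for dipoles it requires estimating $\|K(\mathcal{D}_\beta(x,y)-\mathcal{D}_\beta(\bar x_j,\bar y_j))\|_Y$ through the Lipschitz continuity of $K_*\colon Y\to\Lip(\Omega)$ \emph{and} controlling the variation of the normalization factor $1/(\beta+|x-y|^p)$, which is where $\beta>0$ and $|\bar x_j-\bar y_j|>0$ enter (Lemma~\ref{lem:locallipschitz}). Likewise, verifying that the sets $U^1_i(R)$, $U^2_j(R)$ are genuinely open in the metrized weak* topology of $\mathcal{B}$ --- i.e.\ that a sequence of extremal points weak* converging to a Dirac cannot consist of dipoles and vice versa --- is a separate argument (Lemma~\ref{lem:openneighbor}) that your sketch does not address.

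A secondary point: you claim the convergence $q_k\to\bar q$ in $\Lip(\Omega)$ is needed to make the selected candidates $\widehat\mu_k$ converge to the optimal atoms, and you present this ``identification step'' as the main obstacle. In the paper's route this issue never arises at the level of the application: once the two displayed inequalities above are verified, \cite[Theorem~3.8]{linconv} handles the dynamics of the active set internally. So the obstacle you single out is not the one that has to be overcome, and the one that does (the Lipschitz/quadratic-growth pair for a suitable $g$) is missing from your argument. Deferring ``the technical verification'' to an unspecified appendix does not close this gap, since that verification \emph{is} the proof.
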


\section{Numerical examples}\label{sec:examples}
In this section we show that the KR-norm can be used successfully in the modelling of sparse optimal design tasks.
We consider the problem of reconstructing a signed measure $\tilde \mu \in \mathcal{M}(\Omega)$ from a collection of undetermined measurements $y$ that are modelled using a linear operator $K : \mathcal{M}(\Omega) \rightarrow Y$ mapping to a Hilbert space $Y$. Our goal is to design a model that is able to incorporate additional a priori information on the location of the signed measure $\tilde \mu$ using the KR-norm.
With this in mind we set up a variational problem that penalizes the fidelity to the measurements and the KR-norm of the difference between the unknown and a given positive reference measure $\mu_r$:
\begin{equation} \label{def:problemnum}
    \inf_{\tilde \mu \in \mathcal{M}(\Omega)}  \gamma \frac{1}{2}\|K\tilde \mu - y\|_Y^2 +\|\tilde \mu - \mu_r\|_{\KR_p^\ab} 
\end{equation}
for a suitable parameter choice $\gamma, \alpha, \beta >0$.
Note that \eqref{def:problemnum} can be rewritten in the form of Section \ref{sec:minimizationproblem} by the substitution $\mu = \tilde \mu - \mu_r$, obtaining the equivalent variational problem 
\begin{equation} \label{def:problemnum2}
    \inf_{\mu \in \mathcal{M}(\Omega)}  \gamma \frac{1}{2}\|K\mu - y + K\mu_r\|_Y^2 +\|\mu\|_{\KR_p^\ab}. \tag{$\mathscr{P}$}
\end{equation}
In this formulation and assuming that the observations are approximately consistent with some nonnegative ground truth measure (i.e.~if $y=K\mu^\dag + n$ with $\mu^\dag \geq 0$ and $n$ small), one can see that the negative part of the reconstructed measure $\mu$ is driven to be close to the reference measure $\mu_r$, while the nearly positive sum $\mu+\mu_r$ is encouraged to be faithful to the measurement $y$ through the action of $K$.
Moreover, depending on the choice of the weights $\alpha,\beta,\gamma$, penalizing the KR-norm of $\mu$ either favours the transport between the positive and negative parts of $\mu$ and thus preservation of mass, or TV-like regularization and thus creation of mass. Problem \eqref{def:problemnum2} can be interpreted as the reconstruction of a transport between the reference measure and a target measure (accessed only through the measurements) where creation of mass is allowed.

%In what follows we consider the problem of reconstructing the transport of a positive measure $\mu_r$, called reference measure, to a target measure 
%The target positive measure $\mu^\dagger$ is accessed through a linear operator $K: \mathcal{M}(\Omega) \rightarrow Y$ mapping to a Hilbert space $Y$ and the measurement is denoted by $y \in Y$.
%The starting positive measure, named also reference measure, and denoted by $\mu_r$, is also accessed through a measurement operator $W : \mathcal{M}(\Omega) \rightarrow Y$.
%The goal of our model is to reconstruct the initial measure and the target measure while penalizing the $\KR$-norm of their difference. We thus model the unknown as a signed measure $\mu \in \mathcal{M}(\Omega)$ and we solve

%\begin{equation} \label{def:problemnum}
    %\inf_{\mu \in \mathcal{M}(\Omega)}  \gamma \frac{1}{2}\|K\mu - y + %W\mu_r\|_Y^2 +\|\mu\|_{\KR_p^\ab} \tag{$\mathscr{P}$}
%\end{equation}
%where $\gamma >0$ is a fixed parameter. Note that the positive part of the reconstruction  recovers the target measure, while its negative part recovers the starting measure.
%Moreover, the regularization given by the KR-norm is enforcing the KR-norm of $\mu$ to be small. This, allows to recover measures that are not balanced and, depending on the choice of the weights $\alpha,\beta,\gamma$, to either favour transport-like regularization and thus preservation of mass, or TV-like regularization and creation of mass.  

\begin{rem}
The inverse problem \eqref{def:problemnum} is suitable to model various practical problems. The reference measure $\mu_r$ can be thought as the initial distribution of some commodities that need to be transported in certain quantity to locations described by the measurement $y$. 
%The knowledge of
%$\mu_r$ and $y$ is partial, due to the ill-posed measurement operators $K$ and $W$ and the possibly added noise. 
The linear operator $K$, for example, could describe either the linear diffusion of a source, the wave propagation from a source or more generally any  source-to-observation operator.
Our model allows for starting and  target measures having different mass and it indicates which portion of the commodity should be transported and which, if necessary at all, should be created in order to satisfy the demand. 
\end{rem}

In our first experiment, we consider measures defined in a $1$-dimensional domain $\Omega = [0,20] \subset \R$. We restrict our attention to a measurement operator $K : \mathcal{M}(\Omega) \rightarrow \mathbb{R}^{30}$ that is mapping a measure $\mu \in \mathcal{M}(\Omega)$ to the distributional solution of the heat equation in $\R$ with source $\mu$, i.e.
\begin{align}\label{eq:heat}
\left\{    \begin{array}{ll}
    \partial_t u - \Delta u = 0     & \text{in } (0,T) \times \R   \\
     u(0) = \mu     & 
    \end{array}
\right.
\end{align}
evaluated at time $T = 0.045$ and at locations $\{x_1, \ldots, x_{30}\}$ evenly spaced in $[0,20]$.
Note that in \eqref{eq:heat} the source $\mu$ has to be interpreted as the extension by zero of $\mu \in \mathcal{M}(\Omega)$ to $\R$. 
In this setting $K :\mathcal{M}(\Omega) \rightarrow \mathbb{R}^{30}$ can be computed through the convolution with the heat kernel as
\begin{align*}
    (K\mu)_i = \frac{1}{\sqrt{4\pi T}}\int_\Omega e^{\frac{-|x_i-y|^2}{4T}}\, d\mu(y) 
\end{align*}
for $i = 1, \ldots, 30$. Choosing $F(\mu) = \gamma \frac{1}{2}\|K\mu- y + K\mu_r\|_2^2$ for a given measurement $y \in \R^{30}$,
it is easy to verify that both $F$ and $K$ satisfy the assumptions of Section \ref{sec:minimizationproblem}.
We consider as reference measure $\mu_r = 2.8 \delta_7 + 2.8 \delta_{13}$ and as measurement $y \in \R^{30}$ the vector $K\mu^\dagger$, where 
$\mu^\dagger = \sum_{i=1}^{30} \delta_{x_i}$.
Finally, the parameters regulating the effect of the KR-norm penalization are set to be $\gamma = 60$, $\alpha = 0.9$, $\beta = 0.4$ and $p=1$. We run Algorithm \ref{alg:accgcg} until the stopping criteria
\begin{align}\label{eq:stopping}
\max\Bigg(    \max_{z \in \Omega} \frac{|q_k(z)|}{\alpha} ,  \max_{(x,y)\in\Omega\times \Omega} \Psi_{q_k}(x,y)\Bigg) \leq 1 + \varepsilon
\end{align}
is satisfied (see Lemma \ref{lem:terminationcriterion}), where we set $\varepsilon = 10^{-10}$, which was
attained after around $65$ iterations.
Figure \ref{fig:reconstruction} reports the reconstruction obtained with these parameter choices. The red stems are the Dirac deltas, the blue ones are the rescaled dipoles, the green ones represent the reference measure $\mu_r$ and the crosses are the magnitude of the reconstructed measurements $K\mu_{\overline k}$ at the locations $x_i$, which would equal $y - K\mu_r$ if the reconstruction were perfect. We remark that, under this parameter choice, the reconstructed measure is made of dipoles in the proximity of the reference measure and of Dirac deltas far from it. This is the effect of the KR-norm penalization that is encouraging transport for measurements close to the reference measure and creation of mass far from the reference measure.
\begin{figure}[!ht]
\includegraphics[scale=0.52,center]{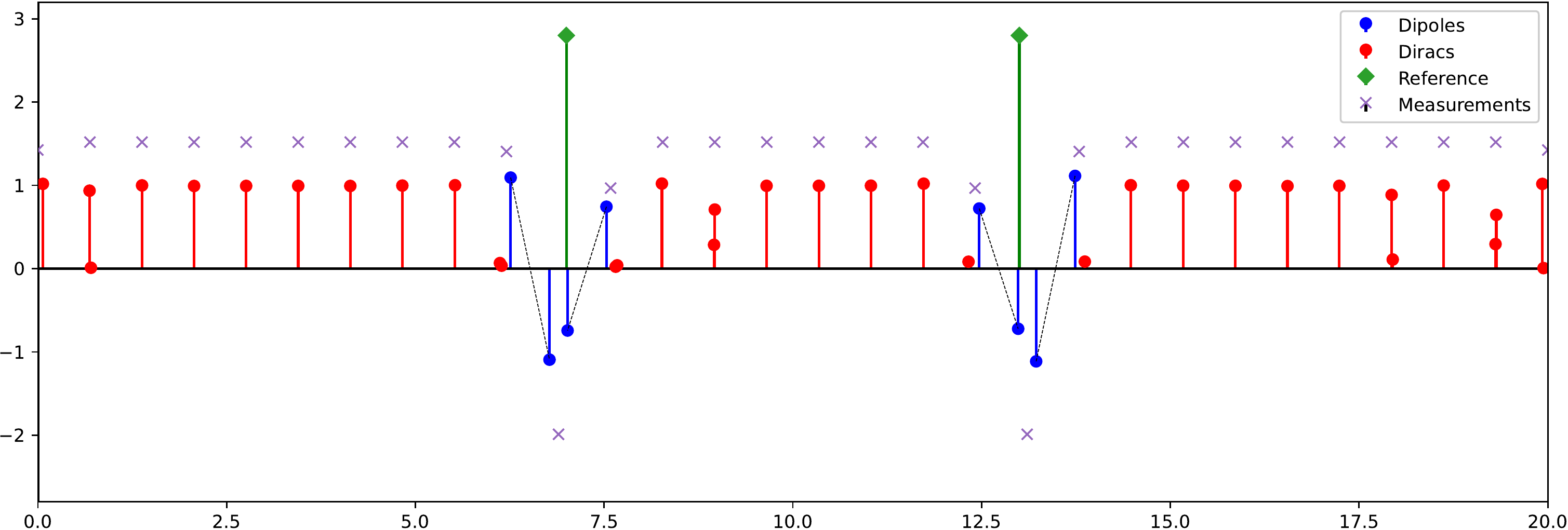}
\caption{\small Reconstruction of a minimizer of \eqref{def:problemnum2} by the application of Algorithm \ref{alg:accgcg}, with the crosses depicting $K \mu_{\bar k}$.}\label{fig:reconstruction}
\end{figure}

\begin{figure}[!ht]
\includegraphics[scale=0.45,center]{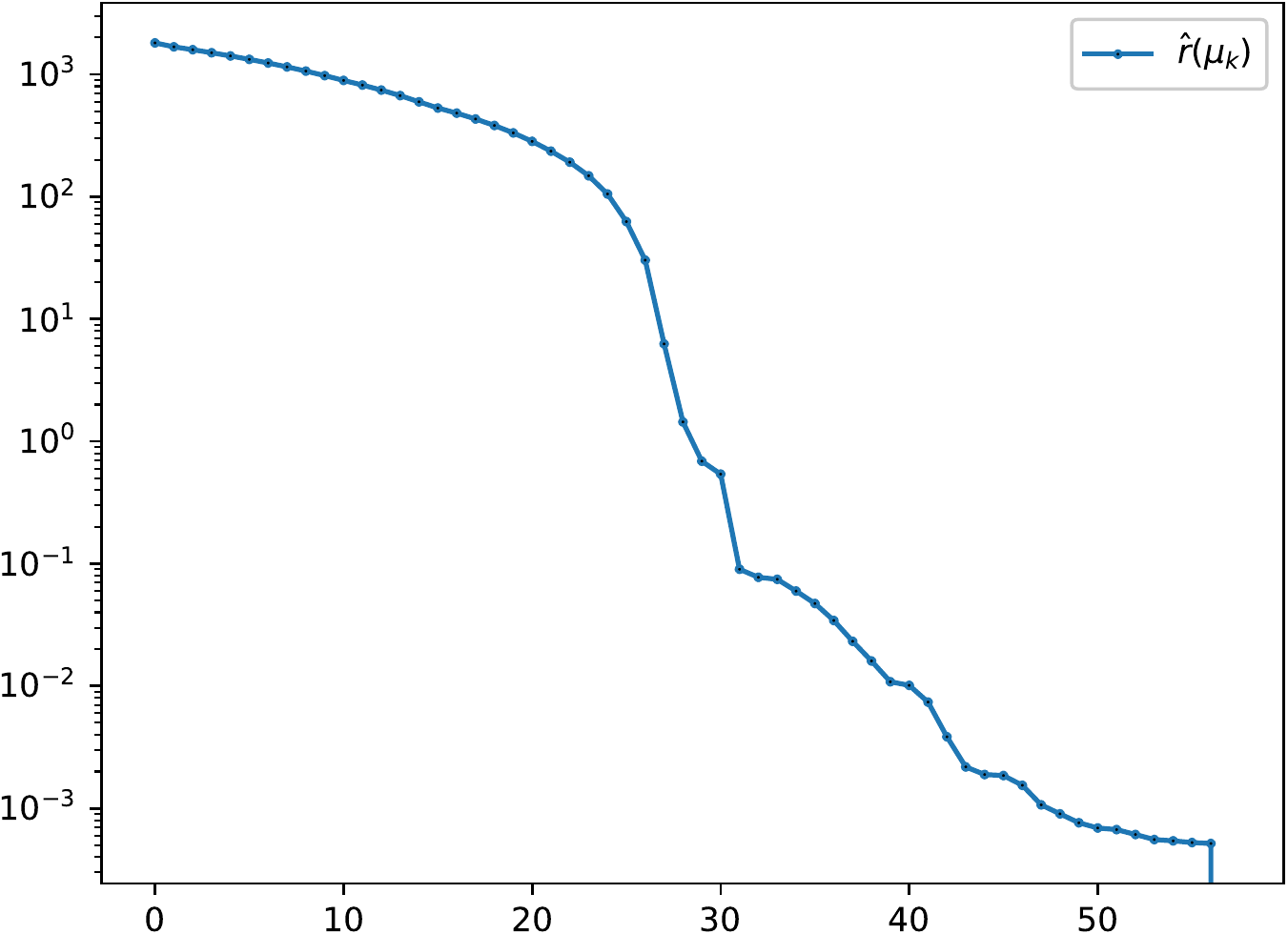}
\caption{\small Plot of the approximate residual $\hat r(\mu_k)$ in logarithmic scale.}\label{fig:convergencegraph}
\end{figure}
In Figure \ref{fig:convergencegraph} we graph the approximate residual $\hat r(\mu_k)$ defined as
\begin{align}\label{eq:approxre}
    \hat r(\mu_k) := F(K\mu_k) + \sum_{j=1}^{N_k} \lambda^k_j - \Big( F(K\mu_{\bar k}) + \sum_{j=1}^{N_{\bar k}} \lambda^{\bar k}_j\Big)
\end{align}
where $\mu_k = \sum_{j=1}^{N_k} \lambda_j^k \mu_j^k$ is the $k$-th iteration and  $
\mu_{\bar k} = \sum_{j=1}^{N_k} \lambda_j^{\bar k} \mu_j^{\bar k}$ is the output of the algorithm produced at the $\bar k$-th iteration. Note that $\hat r(\mu_k)$ is approximately an upper bound of the true residual $r(\mu_k) = J(\mu_k) - \inf_{\mu \in \mathcal{M}(\Omega)}J(\mu)$. Indeed, Theorem \ref{thm:sublinear} together with \cite[Theorem 4.4]{linconv} guarantees that the quantity $F(K\mu_{\bar k}) + \sum_{j=1}^{N_{\bar k}} \lambda^{\bar k}_j$ approximates $\inf_{\mu \in \mathcal{M}(\Omega)}J(\mu)$. Moreover the one-homogeneity and the subadditivity of the KR-norm implies that $J(\mu_k) \leq F(K\mu_k) + \sum_{j=1}^{N_k} \lambda^k_j$.

\begin{figure}[!ht]
\begin{minipage}{0.40\textwidth}
\vspace*{3mm}
\begin{flushleft}
\includegraphics[width=\linewidth]{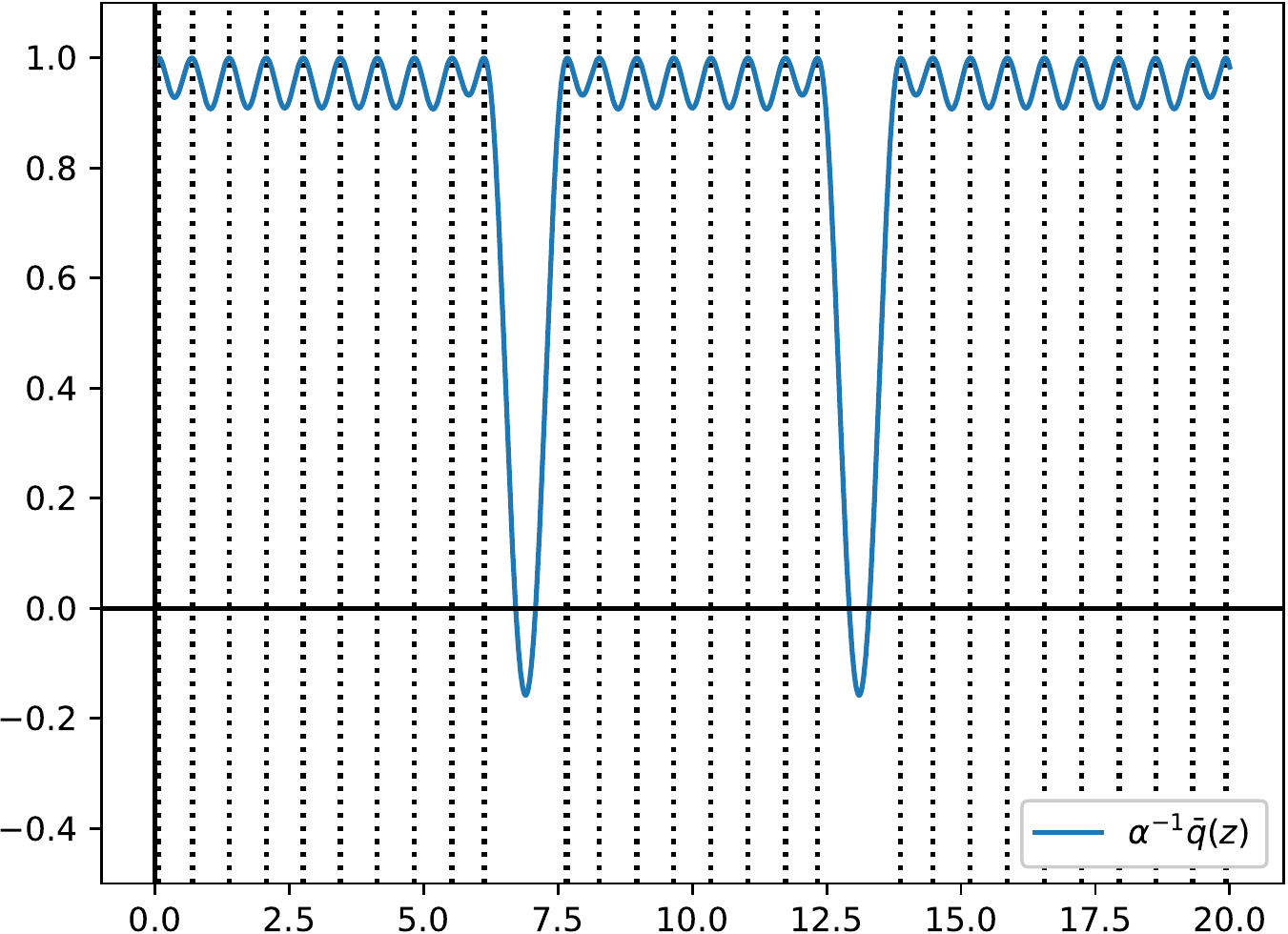}
\end{flushleft}
\end{minipage} 
\hspace*{0.1mm}
\begin{minipage}{0.58\textwidth}
\vspace*{3mm}
\begin{flushleft}
\includegraphics[width=\linewidth]{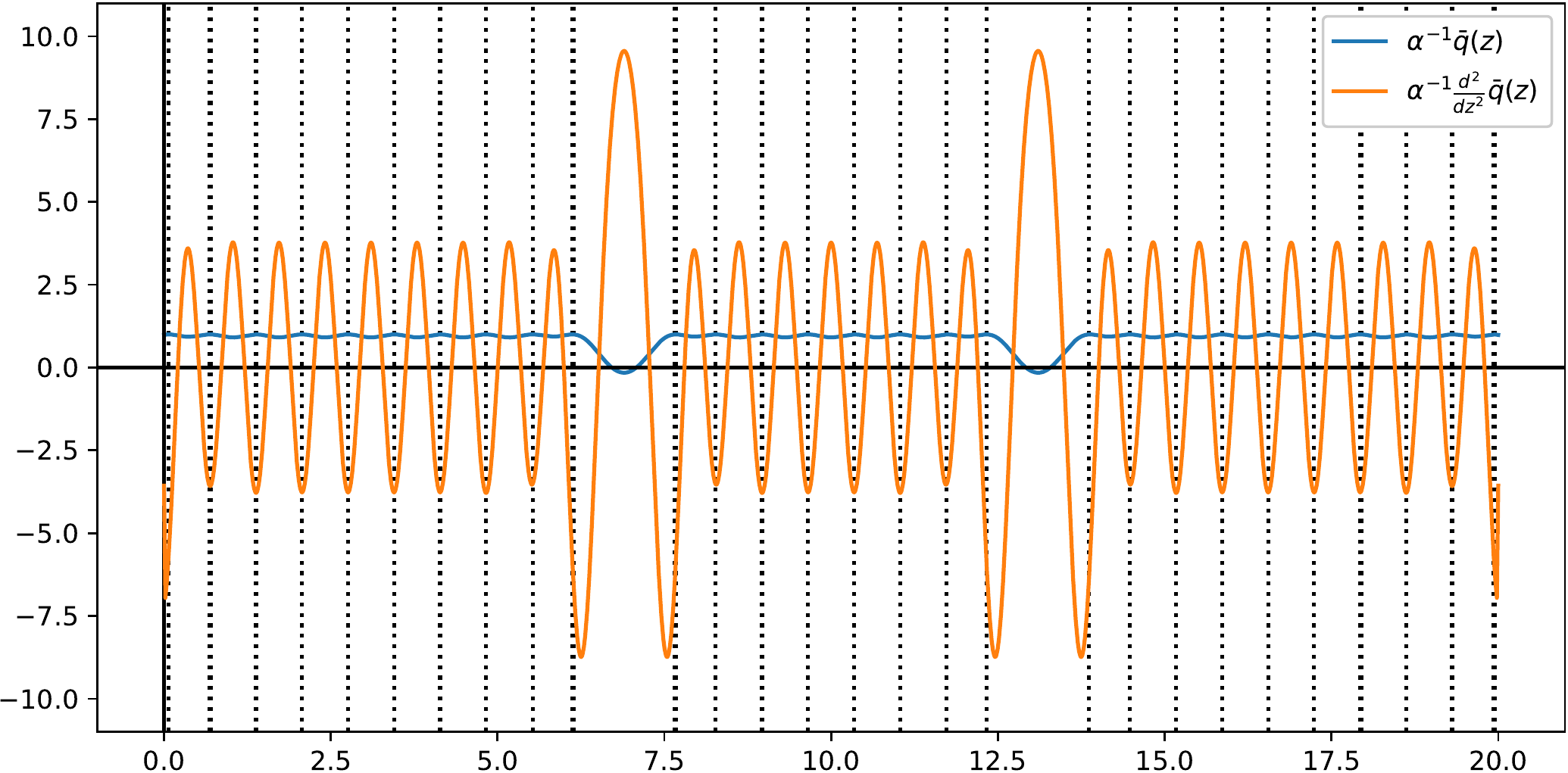}
\end{flushleft}
\end{minipage} 
\caption{\small On the left the rescaled dual variable $\bar q(z)/\alpha$ computed in the output of Algorithm \ref{alg:accgcg}. On the right $\bar q(z)/\alpha$ and its second derivative. The vertical dotted lines, both on the left and on the right figure, represent the location of the Dirac deltas appearing in the output of Algorithm \ref{alg:accgcg}.} \label{fig:dualsecond}
\end{figure}

\begin{figure}[!ht]
\begin{minipage}{0.50\textwidth}
\vspace*{3mm}
\begin{flushleft}
\includegraphics[width=0.95\linewidth]{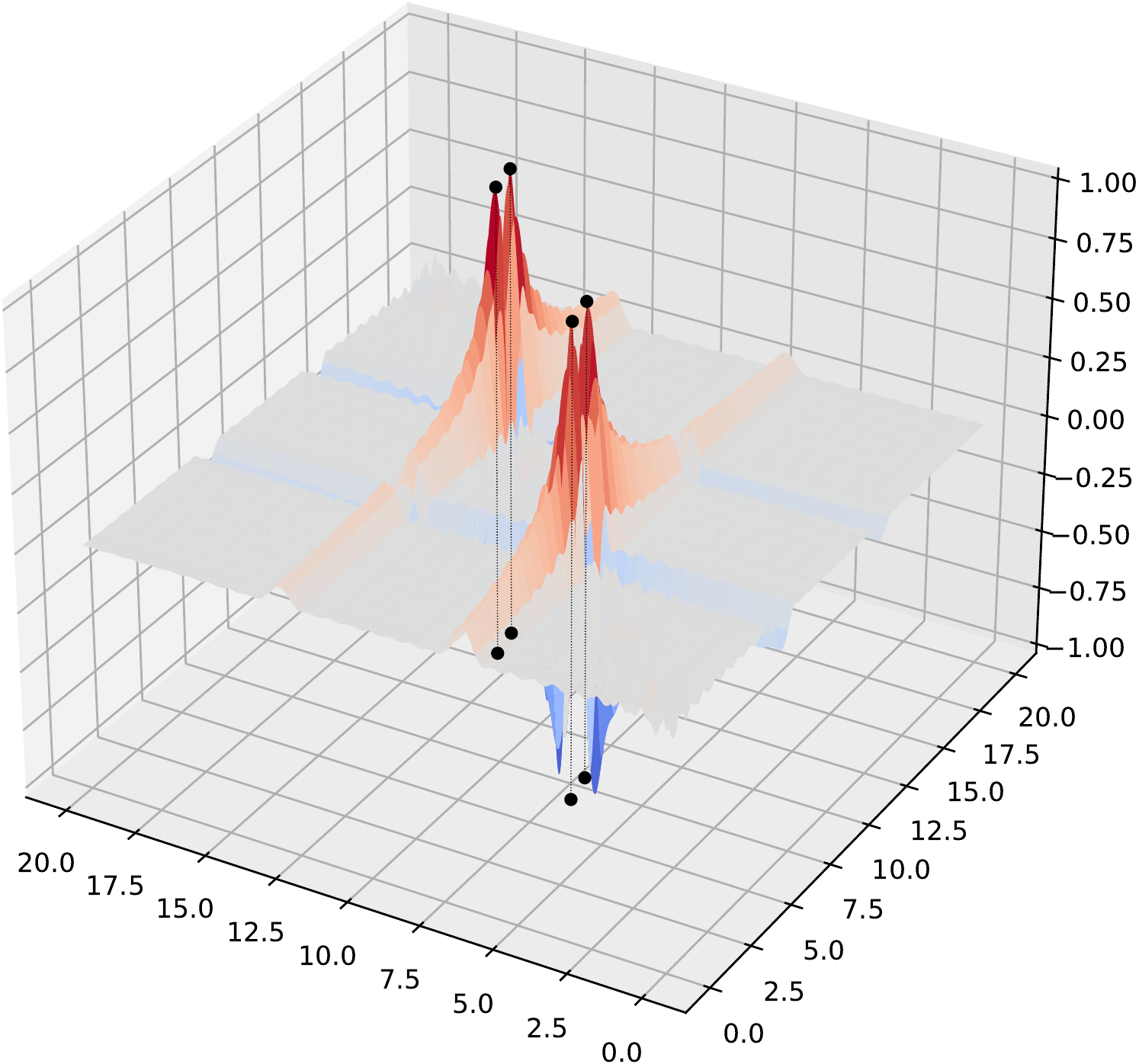}
\end{flushleft}
\end{minipage} 
\hspace*{8mm}
\begin{minipage}{0.42\textwidth}
\vspace*{3mm}
\footnotesize{
\begin{tabular}{ |p{3.8cm}||p{2cm}|  }
 \hline
 Dipole location &$\operatorname{det}\left( \nabla^2 \Psi_{\bar{q}}\right)$ \\
 \hline
 $(6.26,6.78)$ & 73.52 \\
  $(7.53,7.02)$ &   73.46 \\
  $(12.47, 12.98)$    & 73.42 \\
 $(13.74,13.22)$    & 73.43 \\
  \hline
\end{tabular}\\

\medskip
\centering
\includegraphics[width=0.9\linewidth]{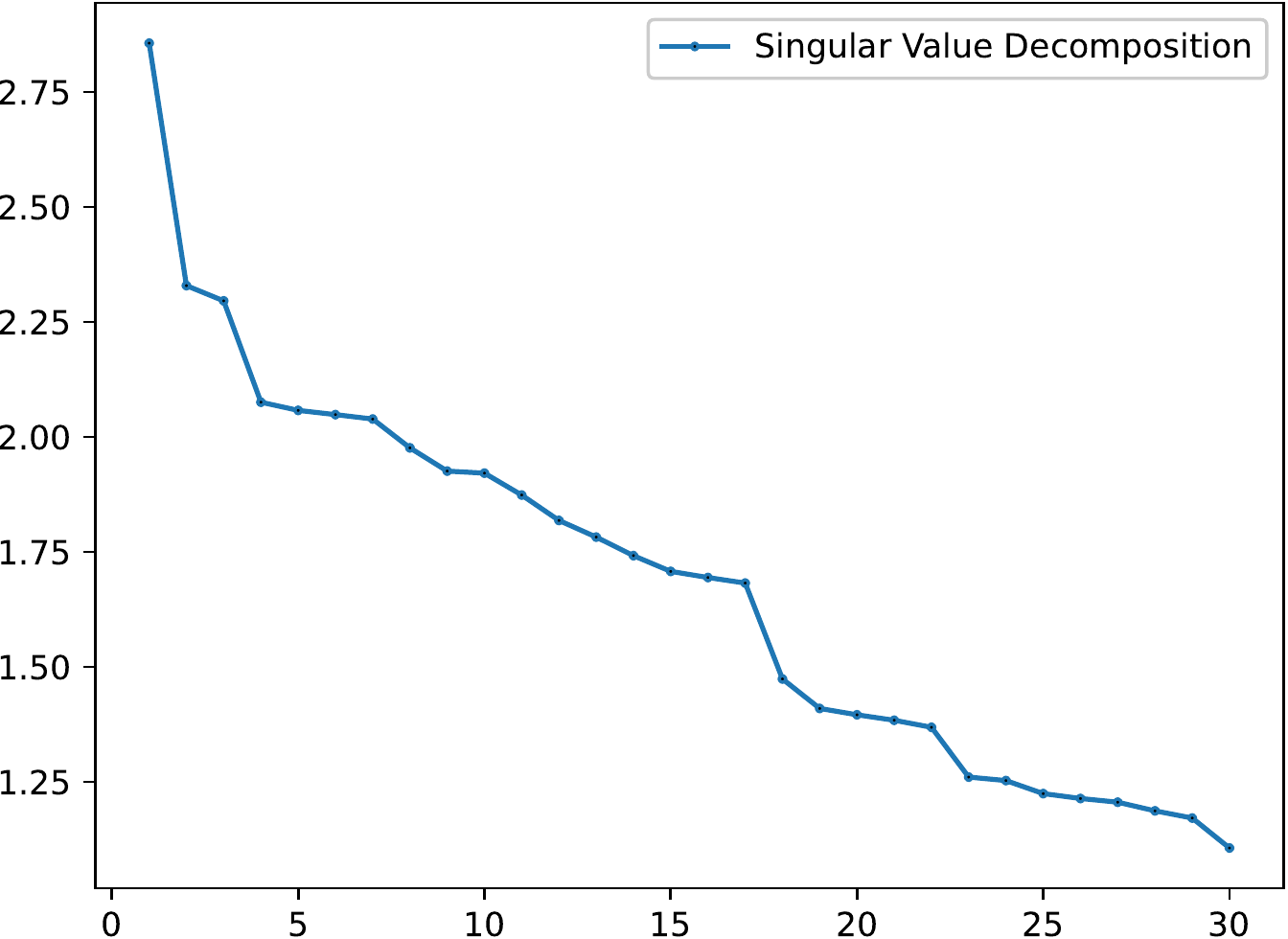}

}
\end{minipage} 
\caption{\small On the left the plot of $\Psi_{\bar q}$. The dotted lines are drawn for each dipole in the output measure of the algorithm. On the right, to demonstrate assumptions $(\mathbf{B3})$ and $(\mathbf{B4})$, a table with the location of each dipole and the corresponding $\operatorname{det}\left( \nabla^2 \Psi_{\bar{q}}\right)$, and the singular values of the matrix constructed from $\left\{K\left(\delta_{\bar{z}_i}\right)\right\}^{\bar{N}_1}_{i=1}$ and $\left\{K\left(\mathcal{D}_\beta(\bar{x}_j, \bar{y}_j)\right)\right\}^{\bar{N}_2}_{j=1}$.}\label{fig:Psi}
\end{figure}

As shown on the convergence graph in Figure \ref{fig:convergencegraph}, the rate of convergence is at least linear in practice. This is theoretically ensured by Theorem \ref{thm:linear} provided Assumptions $(\mathbf{B1})$--$(\mathbf{B5})$ in Section \ref{subsubsec:linear} are fulfilled. Clearly $(\mathbf{B1})$ is verified by our choice of $F$. Since, we do not know the explicit minimizer, in order to verify the validity of $(\mathbf{B2})$ and $(\mathbf{B3})$ we consider the dual variable $\bar q$ of the output of Algorithm \ref{alg:accgcg}, we plot the functions $\bar q$, $\Psi_{\bar q}$ and we compute their second derivatives. Figure \ref{fig:dualsecond} and Figure \ref{fig:Psi} shows that $\bar q = \alpha$ precisely on the location of the Dirac deltas of the output measure $\mu_{\bar k}$ and $\Psi_{\bar q} = 1$ on the location of its dipoles. Moreover, Figure \ref{fig:dualsecond} demonstrates that $\bar q''(z) < 0$ on the Dirac deltas and Figure \ref{fig:Psi} reports the values of $\operatorname{det}\left( \nabla^2 \Psi_{\bar{q}}\right)$ on the dipoles, showing that they are strictly positive, and the singular values of the matrix constructed from $\left\{K\left(\delta_{\bar{z}_i}\right)\right\}^{\bar{N}_1}_{i=1}$ and $\left\{K\left(\mathcal{D}_\beta(\bar{x}_j, \bar{y}_j)\right)\right\}^{\bar{N}_2}_{j=1}$ to check their linear independence and ensuring Assumption $(\mathbf{B4})$. Finally, it is easy to verify that for $y \in \R^{30}$
\begin{align}
z \mapsto K_* y(z) =  \frac{1}{\sqrt{4\pi T}} \sum_{i=1}^{30} e^{-\frac{|x_i-z|^2}{4T}}y_i \in \text{Lip}(\Omega)
\end{align}
and the mapping $K_* : \R^{30} \rightarrow \text{Lip}(\Omega)$ is continuous as required in $(\mathbf{B3})$

As second experiment we consider infinite dimensional measurements of the same convolution with the heat kernel, so that $K : \mathcal{M}(\Omega) \rightarrow L^2(\Omega)$ for $\Omega=[0,20]$ is now defined as 
\begin{align*}
    (K\mu)(x) = \frac{1}{\sqrt{4\pi T}}\int_\Omega e^{\frac{-|x-y|^2}{4T}}\, d\mu(y) \quad x \in \Omega.
\end{align*}
We choose $F(\mu) = \gamma \frac{1}{2}\|K\mu- y + K\mu_r\|_{L^2(\Omega)}^2$ with reference measure $\mu_r = 1.5 \delta_8 + 1.5 \delta_{12}$ and measurement $y(x) = \sin(\frac{\pi x}{4}) + 1$.
The parameters are set to be $\gamma = 4$, $\alpha = 0.8$, $\beta = 0.3$ and $p=1$. We run Algorithm \ref{alg:accgcg} until the stopping criteria \eqref{eq:stopping}
is satisfied with $\varepsilon = 10^{-6}$, which was attained after around $190$ iterations. 
Figure \ref{fig:reconstruction2} reports the reconstruction obtained with these parameter choices while Figure \ref{fig:dualgrah2} shows the dual variable of the output of the algorithm and the convergence graph of the approximate residual defined in \eqref{eq:approxre}. 

\begin{figure}[!ht]
\includegraphics[scale=0.52,center]{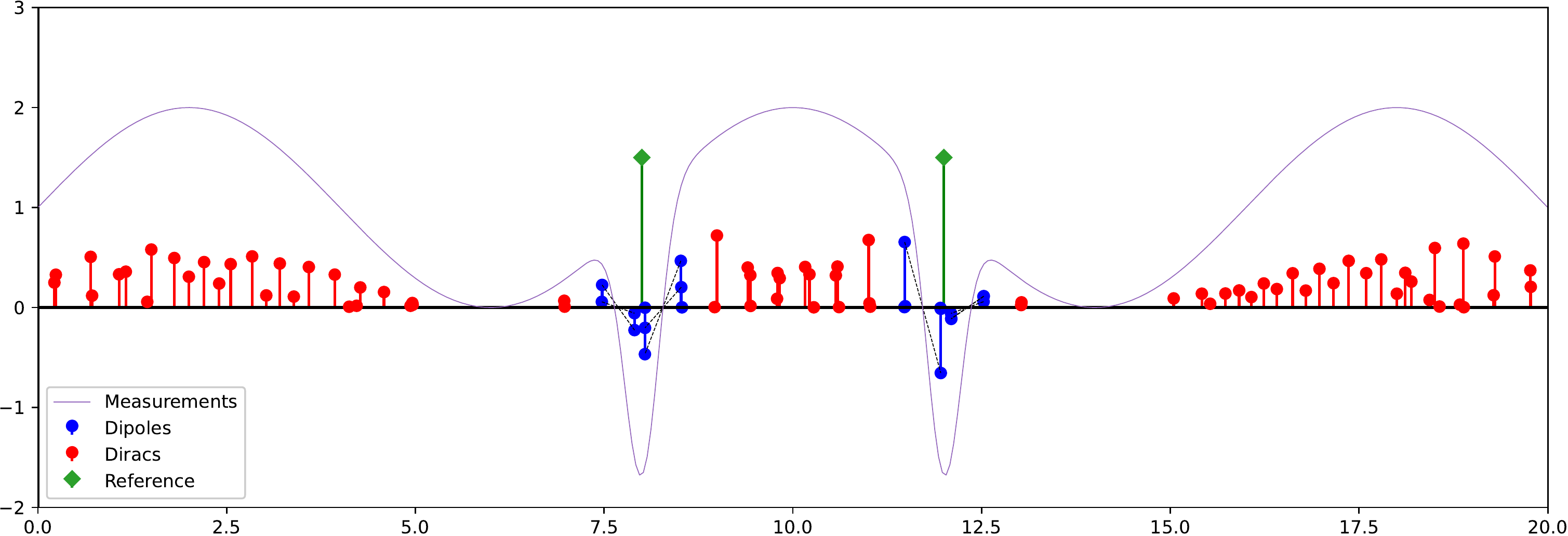}
\caption{\small Reconstruction of a minimizer of \eqref{def:problemnum2} for infinite dimensional measurements by the application of Algorithm \ref{alg:accgcg}, with a continuous plot showing $K \mu_{\bar k}$.}\label{fig:reconstruction2}
\end{figure}

\begin{figure}[!ht]
\begin{minipage}{0.49\textwidth}
\vspace*{3mm}
\begin{flushleft}
\includegraphics[scale=0.45,center]{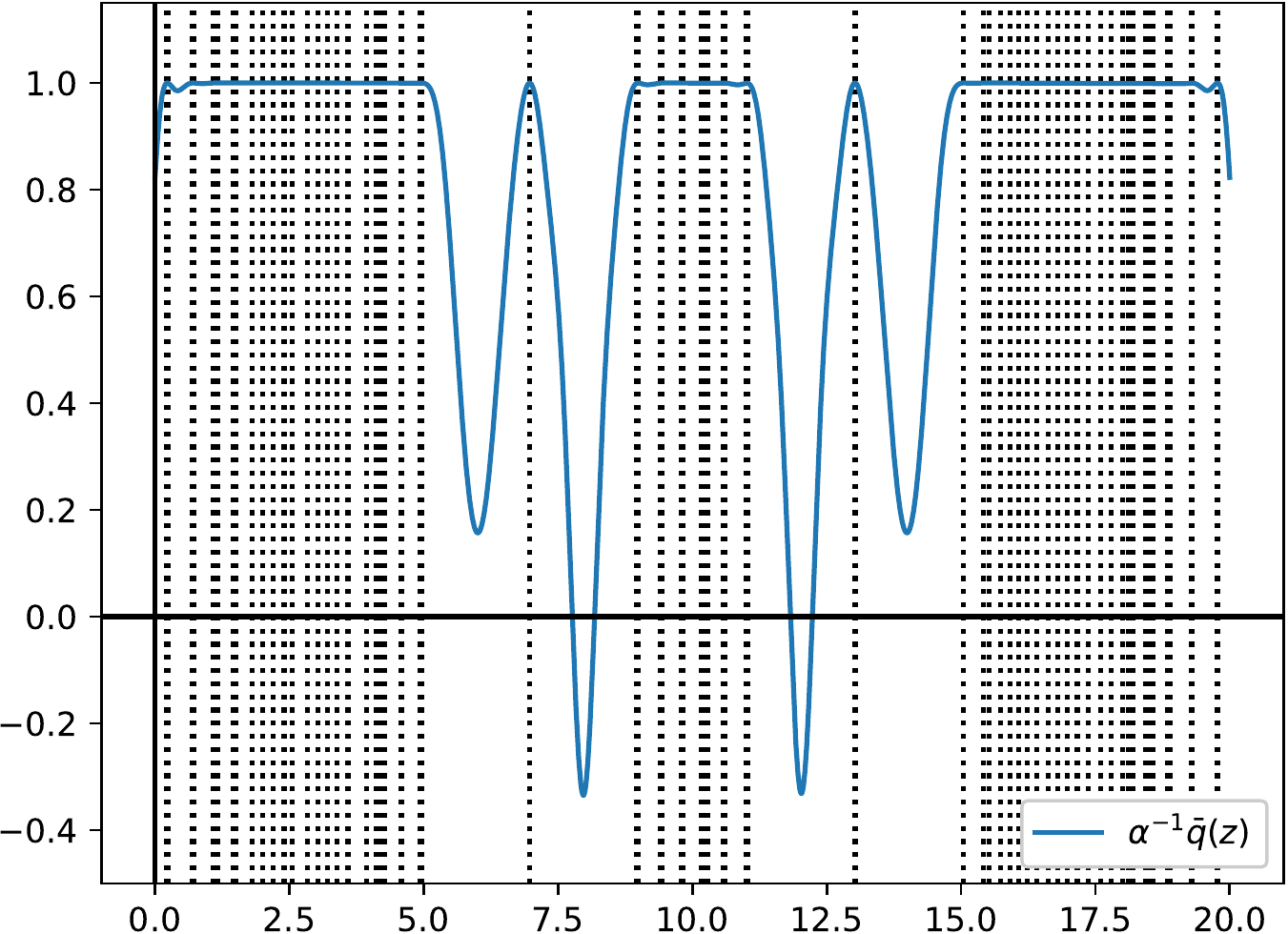}
\end{flushleft}
\end{minipage} 
\hspace*{0.1mm}
\begin{minipage}{0.49\textwidth}
\vspace*{3mm}
\begin{flushleft}
\includegraphics[scale=0.45,center]{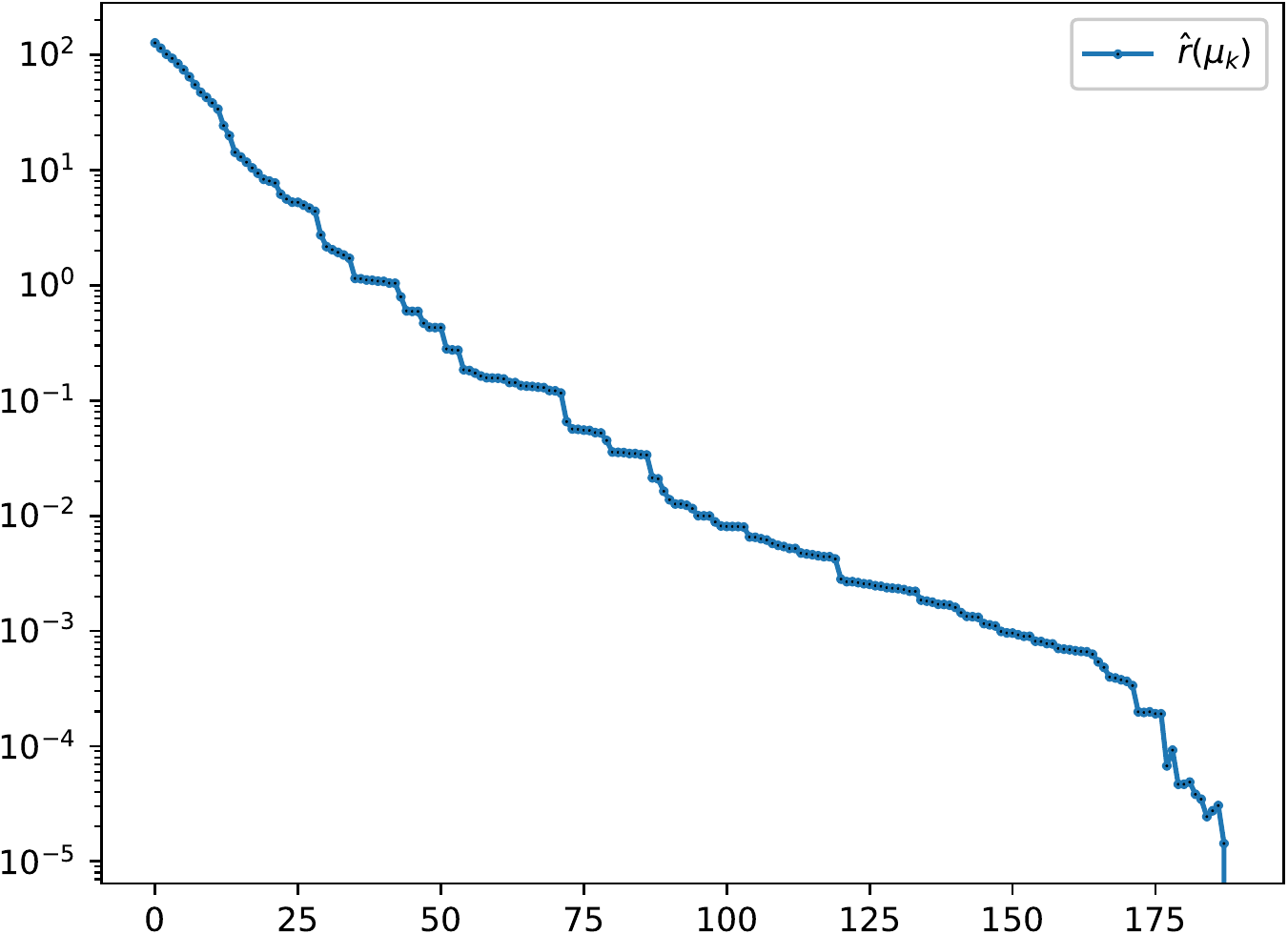}
\end{flushleft}
\end{minipage} 
\caption{\small On the left the rescaled dual variable $\bar q(z)/\alpha$ computed for the output of Algorithm \ref{alg:accgcg}. On the right the approximate residual $\hat r(\mu_k)$ in logarithmic scale.} \label{fig:dualgrah2}
\end{figure}

All of the previous experiments are carried out on Python3 on
a MacBook Pro with 8 GB RAM and an Intel®Core™ i5, Quad-Core, 2.3 GHz.

\appendix
\section{Proofs for Section~\ref{subsubsec:sublinear}}\label{sec:appendix}
In this section, we collect the necessary auxiliary results for the proof of Theorem~\ref{thm:linear} by applying the results of~\cite{linconv}. For this purpose, we keep using the notation $B:=\{\mu \,|\, \|\mu\|_{\KR_p^{\alpha,\beta}}\leq 1\}$ and further introduce~$\mathcal{B}\coloneqq \overline{\operatorname{Ext}(B)}^*$. Since the predual space~$\mathcal{C}(\Omega)$ is separable,~$\mathcal{B}$ is weak* compact and there exists a metric~$d_\mathcal{B}$ which metrizes the weak* topology on~$\mathcal{B}$, see~\cite[Theorem~3.29]{brezis}.

\begin{lemma} \label{lem:charclosedness}
We have
\begin{align*}
    \mathcal{B}= \{\,(\sigma/\alpha)\delta_z\;|&\;\sigma\in\{-1,+1\},~z \in \Omega\,\} \\ &\cup \{\,\mathcal{D}_\beta(x,y)\;|\;(x,y) \in \Omega \times \Omega,~0\leq|x-y|^p\leq 2 \alpha-\beta\,\}.
\end{align*}
\end{lemma}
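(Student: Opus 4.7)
The plan is to establish the equality by proving both inclusions, after first recording a continuity property of the parametrization maps that will do most of the work.

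As a preparatory step I would verify that, since~$\beta>0$, the maps
\[
\Omega \ni z \longmapsto \pm\delta_z/\alpha \in \M(\Omega), \qquad \Omega\times\Omega \ni (x,y) \longmapsto \mathcal{D}_\beta(x,y)\in\M(\Omega)
\]
are continuous in the weak* topology on~$\M(\Omega)$, where the dipole map is understood to take the value $0$ on the diagonal $\{x=y\}$ (consistent with the formula, since numerator and denominator become $0$ and~$\beta$ respectively). Continuity of the first map is immediate from the definition of weak* convergence. For the second map, the denominator~$\beta+|x-y|^p$ is bounded below by~$\beta>0$, and for any~$\phi \in \mathcal{C}(\Omega)$ the pairing~$(\phi(x)-\phi(y))/(\beta+|x-y|^p)$ depends continuously on~$(x,y)\in\Omega\times\Omega$ and vanishes as~$y\to x$ by the continuity of~$\phi$.

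For the inclusion~$\overline{\Ext(B)}^{\,*}\subseteq \text{RHS}$, I would take any sequence~$(\mu_n)\subset \Ext(B)$ with~$\mu_n\wsto \mu$, pass to a subsequence either consisting entirely of (signed) Dirac masses~$\sigma_n\delta_{z_n}/\alpha$ or entirely of dipoles~$\mathcal{D}_\beta(x_n,y_n)$ with~$0<|x_n-y_n|^p<2\alpha-\beta$ (the characterization of~$\Ext(B)$ from Theorem~\ref{thm:KRextremals} leaves only these two options). Using compactness of~$\Omega$ or~$\Omega\times\Omega$ I extract convergent parameters~$z_n\to z$ or~$(x_n,y_n)\to(x,y)$, and by the continuity noted above the weak* limit is of the desired form. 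The constraints~$|x-y|^p\leq 2\alpha-\beta$ and~$|x-y|^p\geq 0$ both pass to the limit as closed conditions.

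For the reverse inclusion~$\text{RHS}\subseteq \overline{\Ext(B)}^{\,*}$, the Dirac masses and the dipoles with~$0<|x-y|^p<2\alpha-\beta$ already belong to~$\Ext(B)$ by Theorem~\ref{thm:KRextremals}. Two boundary cases remain: if~$|x-y|^p=0$ then~$\mathcal{D}_\beta(x,y)=0$, and I would realize~$0$ as the weak* limit of~$\mathcal{D}_\beta(x,y_n)$ for~$y_n\to x$ with~$0<|x-y_n|^p<2\alpha-\beta$. If~$|x-y|^p=2\alpha-\beta$, the convexity of~$\Omega$ lets me set~$y_n \coloneqq x+(1-1/n)(y-x)$, so that~$|x-y_n|^p=(1-1/n)^p(2\alpha-\beta)<2\alpha-\beta$ and~$\mathcal{D}_\beta(x,y_n)\in\Ext(B)$ converges weak* to~$\mathcal{D}_\beta(x,y)$ by continuity.

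The only non-routine point is the weak* continuity of the dipole map across the diagonal, which hinges on~$\beta>0$ to keep the denominator uniformly away from zero; with~$\beta=0$ that map would be singular at~$x=y$ and the statement would need to be re-examined. Everything else reduces to compactness and density arguments.
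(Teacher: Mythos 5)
Your proof is correct and follows essentially the same route as the paper's: compactness of $\Omega$ (resp.\ $\Omega\times\Omega$) to extract convergent parameters from a weak*-convergent sequence of extremal points, together with the weak* continuity of the parametrizations $z\mapsto \pm\delta_z/\alpha$ and $(x,y)\mapsto\mathcal{D}_\beta(x,y)$. You are in fact more complete than the paper, which only details the inclusion of the closure into the right-hand side and leaves the reverse inclusion implicit (in particular the boundary cases $x=y$ and $|x-y|^p=2\alpha-\beta$, which you handle explicitly by approximating along the segment $[x,y]$); the one point worth stating is that arguing with sequences is legitimate because, as recorded just before the lemma, the weak* topology is metrizable on the bounded set $\mathcal{B}$.
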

\begin{proof}
By the characterization of~$\operatorname{Ext}(B)$ we first observe that
\begin{align*}
\mathcal{B}= \overline{\{\,(\sigma/\alpha)\delta_z\;|\;}&\overline{\sigma\in\{-1,+1\},~z \in \Omega\,\}}^*\\ &  \cup \overline{\left\{\,\mathcal{D}_\beta(x,y)\;|\;(x,y) \in \Omega \times \Omega,~0<|x-y|^p< 2 \alpha-\beta\,\right\}}^*.    
\end{align*}
Now, let~$\mu_k=(\sigma_k/\alpha) \delta_{z_k}$,~$\sigma_k\in\{-1,1\}$, $z_k\in \Omega$,~$k \in \N$, denote a weak* convergent sequence with limit~$\bar{\mu}$. Then, due to the compactness of~$\Omega$, there exists a subsequence, denoted by the same symbol, with
\[
    (\sigma_k, z_k) \rightarrow (\bar{\sigma}, \bar{z}) \quad \text{for some}~(\bar{\sigma}, \bar{z}) \in \{-1,1\} \times \Omega.
\]
Setting~$\tilde{\mu}=(\bar{\sigma}/\alpha)\delta_{\bar{z}}$, the associated sequence of measures satisfies
\[
    \langle q,\mu_k \rangle= (\sigma/\alpha) q(z_k) \rightarrow (\bar\sigma/\alpha) q(\bar z)=\langle q,\tilde \mu \rangle \quad \text{for all}~q \in \mathcal{C}(\Omega).
\]
Since weak* limits are unique,~$\bar{\mu}=\tilde \mu$ follows.

Similarly, we see that any weak* convergent sequence~$\mu_k=\mathcal{D}_\beta(x_k,y_k)$ with
\[
(x_k,y_k)\in\Omega\times\Omega,~0<|x_k-y_k|^p< 2\alpha-\beta   
\]
necessarily satisfies~$\mu_k \xrightharpoonup{\ast} \mathcal{D}_\beta(\bar{x},\bar{y})$ for some~$(\bar{x},\bar{y})\in \Omega \times \Omega$ with~$0\leq |\bar{x}-\bar{y}|^p\leq 2\alpha-\beta$. This finishes the proof.
\end{proof}
In order to apply the abstract convergence result of~\cite{linconv}, we have to check some structural assumptions. 
First, we show that, due to Assumption $(\mathbf{B2})$, the linear problem
\[
    \max_{\mu \in \mathcal{B}} \langle \bar{q}, \mu \rangle
\]
admits finitely many maximizers and all of them are extremal points. 
\begin{lemma} \label{lem:onlyfinite}
Let Assumption~$(\mathbf{B2})$ hold. Then we have
\[
    \argmax_{\mu \in \mathcal{B}} \langle \bar{q}, \mu \rangle= \left\{(\operatorname{sign}(\bar{q}(\bar{z}_i))/\alpha)\delta_{\bar{z}_i}\right\}^{\bar{N}_1}_{i=1} \cup \left\{\mathcal{D}_\beta(\bar{x}_j,\bar{y}_j)\right\}^{\bar{N}_2}_{j=1}.
\]
\end{lemma}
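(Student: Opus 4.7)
My plan is to rewrite the constraint $\mu \in \mathcal{B}$ using Lemma~\ref{lem:charclosedness}, evaluate $\langle \bar q,\mu\rangle$ in closed form on each of the two families appearing there, and then combine the pointwise bounds $|\bar q|\leq\alpha$ and $\Psi_{\bar q}\leq 1$ from Theorem~\ref{thm:firstorder} with Assumption $(\mathbf{B2})$ to pin down exactly where the supremum $1$ is attained. Concretely, any $\mu \in \mathcal{B}$ is either of the form $(\sigma/\alpha)\delta_z$ with $\sigma\in\{-1,+1\}$ and $z\in\Omega$, in which case $\langle \bar q,\mu\rangle=\sigma\bar q(z)/\alpha \leq |\bar q(z)|/\alpha \leq 1$, or a (possibly degenerate) dipole $\mathcal{D}_\beta(x,y)$ with $0\leq |x-y|^p \leq 2\alpha-\beta$, in which case $\langle \bar q,\mu\rangle = \Psi_{\bar q}(x,y) \leq 1$ (with the convention that the value at $x=y$ is $0$). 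This immediately gives the uniform bound $\langle \bar q,\mu\rangle \leq 1$ on $\mathcal{B}$.

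Next I would verify that the supremum is actually equal to $1$ and that every element of the two displayed families attains it. Since $\bar N=\bar N_1+\bar N_2 \geq 1$ by $(\mathbf{B2})$, at least one of the finite sets in the target formula is non-empty. In the Dirac case $(\operatorname{sign}(\bar q(\bar z_i))/\alpha)\delta_{\bar z_i}$ yields $|\bar q(\bar z_i)|/\alpha=1$; in the dipole case $\Psi_{\bar q}(\bar x_j,\bar y_j)=1$ by definition, and the strict bound $0<|\bar x_j-\bar y_j|^p<2\alpha-\beta$ in $(\mathbf{B2})$ guarantees that the corresponding dipole lies in $\mathcal{B}$ (indeed, in $\operatorname{Ext}(B)$).

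Finally I would rule out any extra maximizers. If $(\sigma/\alpha)\delta_z \in \mathcal{B}$ attains $\langle \bar q,\mu\rangle=1$, then $\bar q(z)=\sigma\alpha=\pm\alpha$, and $(\mathbf{B2})$ forces $z=\bar z_i$ and $\sigma=\operatorname{sign}(\bar q(\bar z_i))$ for some $i$. If a dipole $\mathcal{D}_\beta(x,y)\in \mathcal{B}$ attains $\Psi_{\bar q}(x,y)=1$, the degenerate case $x=y$ (value $0$) is excluded, so $(x,y)$ lies in the finite set $\{(\bar x_j,\bar y_j)\}_{j=1}^{\bar N_2}$ by $(\mathbf{B2})$. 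The one subtle point, which I expect to be the only (minor) obstacle, is that Lemma~\ref{lem:charclosedness} also keeps dipoles with $|x-y|^p=2\alpha-\beta$ inside $\mathcal{B}$, even though they are not extremal points of $B$; for such a pair, $\Psi_{\bar q}(x,y)=1$ forces $\bar q(x)-\bar q(y)=2\alpha$ and hence $\bar q(x)=\alpha$, $\bar q(y)=-\alpha$, so by $(\mathbf{B2})$ the pair $(x,y)$ must coincide with some $(\bar x_j,\bar y_j)$, contradicting the strict inequality $|\bar x_j-\bar y_j|^p<2\alpha-\beta$ in $(\mathbf{B2})$. This rules out the boundary case and completes the identification of $\argmax_{\mu\in\mathcal{B}}\langle\bar q,\mu\rangle$.
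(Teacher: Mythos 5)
Your proposal is correct and follows essentially the same route as the paper: decompose $\mathcal{B}$ via Lemma~\ref{lem:charclosedness}, evaluate $\langle\bar q,\mu\rangle$ in closed form on Diracs and dipoles, bound it by $1$ using $|\bar q|\leq\alpha$ and $\Psi_{\bar q}\leq 1$, and invoke $(\mathbf{B2})$ to identify the equality set. If anything you are slightly more careful than the paper, which glosses over the degenerate dipoles ($x=y$) and the boundary dipoles with $|x-y|^p=2\alpha-\beta$ that lie in $\mathcal{B}\setminus\operatorname{Ext}(B)$; your explicit exclusion of these cases is a welcome addition rather than a deviation.
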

\begin{proof}
Define
\[
    D:=\big\{(\operatorname{sign}(\bar{q}(\bar{z}_i))/\alpha)\delta_{\bar{z}_i}\big\}^{\bar{N}_1}_{i=1} \cup \left\{\mathcal{D}_\beta(\bar{x}_j,\bar{y}_j)\right\}^{\bar{N}_2}_{j=1}.
\]
By assumption,~$D$ is nonempty and there holds~$\langle \bar{q}, \mu \rangle=1$ for all~$\mu \in D$.
Moreover, since~$\bar{q}$ is the unique dual variable for Problem~\eqref{def:problem} and~$ \|\cdot\|_{\KR_p^\ab}$ is positively one-homogeneous, we conclude
\[
    \max_{\mu \in \mathcal{B}}\langle \bar{q},\mu \rangle=1 \quad \text{and thus}~D \subset \argmax_{\mu \in \mathcal{B}} \langle \bar{q}, \mu \rangle
\]
The inverse inclusion follows immediately from Assumption~$(\mathbf{B2})$ which gives
\[
    \max_{z}|\bar{q}(z)|\leq \alpha,~\max_{(x,y)}\Psi_{\bar{q}}(x,y)\leq  1,
\]
as well as noting that
\begin{align*}
    \langle \bar{q},\ \sigma \delta_z  \rangle=1\text{ for }\sigma \in \{-1,1\}\text{ and }z \in \Omega &\text{ implies}~|p(z)|=\alpha ,\text{ and}\\
    \langle \bar{q},\ \mathcal{D}_\beta(x,y)  \rangle=1 \text{ with } 0\leq |x-y|\leq 2 \alpha-\beta   &\text{ is equivalent to }\Psi(x,y)=1.\qedhere
\end{align*}
\end{proof}
For abbreviation, set
\[
    \bar{\mu}^1_i= (\sign(\bar{q}(\bar{z}_i))/\alpha) \delta_{\bar{z}_i},~\bar{\mu}^2_j= \mathcal{D}_\beta(\bar{x}_j,\bar{y}_j) \quad \text{for all}~i=1,\dots, \bar{N}_1,~j=1,\dots \bar{N}_2.
\]
Second, we have to show the existence of~$d_{\mathcal{B}}$-neighborhoods~$U^1_i$ of~$\bar{\mu}^1_i$ and $U^2_j$ of~$\bar{\mu}^2_j$ in~$\mathcal{B}$, respectively, as well as of a mapping~$g \colon \operatorname{Ext}(B) \times \operatorname{Ext}(B)$ and~$\theta,~C_K >0$ with
\begin{equation} \label{def:LipandQuad}
    \|K(\mu-\mu^k_j)\|_Y \leq  C_K \, g(\mu, \mu^k_j)\ \text{ and }\ 1-\langle \bar{q},\mu \rangle \geq \theta\, g(\mu,\mu^k_j)^2
\end{equation}
for all~$j=1, \dots, \bar{N}_k$,~$k=1,2$, and all~$\mu \in U^k_j \cap \operatorname{Ext}(B)$.
We claim that this satisfied for
\begin{align*}
    g(\mu_1,\mu_2) \coloneqq \begin{cases} |z_1-z_2|+|\sigma_1-\sigma_2| & \mu_1= \sigma_1 \delta_{z_1},~\mu_1= \sigma_2 \delta_{z_2},~z_1,z_2 \in \Omega,~\sigma_1,\sigma_2 \in \{-1,1\} \\ \left|\begin{pmatrix} x_1-x_2 \\ y_1-y_2\end{pmatrix} \right| & \mu_1=\mathcal{D}_\beta(x_1,y_1),~\mu_2=\mathcal{D}_\beta(x_2,y_2),~ (x_1,y_1),(x_2,y_2) \in \Omega \times \Omega   \\
    0 & \text{else}
    \  \end{cases}.
\end{align*}
%i.e. signed Dirac-Delta functionals are compared by the distance of their support points and their signs. 
The proof is split into two parts. First, we characterize open~$d_{\mathcal{B}}$-neighborhoods around the associated extremal points.
\begin{lemma} \label{lem:openneighbor}
For~$0< R$ define the sets
\[
    U^1_i(R) \coloneqq \left\{\,(\operatorname{sign}(\bar{q}(\bar{z}_i))/\alpha)\delta_{z}\;|\;z \in B_R(\bar{z}_i)\,\right\}  \quad \text{for all}~ i=1, \dots, \bar{N}_1,
\]
as well as
\[
    U^2_j(R) \coloneqq \left\{\,\mathcal{D}_\beta(\bar{x}_j,\bar{y}_j)\;|\;(x,y) \in B_R(\bar{x}_j)\times B_R(\bar{y}_j)\,\right\} \quad \text{for all}~ j=1, \dots, \bar{N}_2.
\]
Then ${U}^1_i(R)$ is a $d_{\mathcal{B}}$-neighborhood of~$(\operatorname{sign}(\bar{q}(\bar{z}_i)/\alpha)\delta_{\bar{z}_i}$,~$i=1,\dots,\bar{N}_1$, and $\bar{U}^2_j(R)$ is a $d_{\mathcal{B}}$-neighborhood of~$\mathcal{D}_\beta(\bar{x}_j,\bar{y}_j)$,~$j=1,\dots,\bar{N}_2$. Moreover, for every~$R>0$ small enough, there holds~${U}^1_i(R),~{U}^2_i(R) \subset \operatorname{Ext}(B)$.
\end{lemma}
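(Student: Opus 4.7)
The strategy is to verify the neighborhood property by a sequential argument: since $\mathcal{B}$ is metrizable in the weak* topology, it suffices to show that any sequence $\mu_n \in \mathcal{B}$ with $\mu_n \xrightharpoonup{\ast} \bar{\mu}^k_i$ is eventually contained in $U^k_i(R)$. Then the complement of $U^k_i(R)$ in $\mathcal{B}$ does not contain $\bar{\mu}^k_i$ in its sequential (hence topological) closure, so $U^k_i(R)$ contains a $d_\mathcal{B}$-open neighborhood of $\bar{\mu}^k_i$. By Lemma~\ref{lem:charclosedness}, each $\mu_n$ is either a rescaled Dirac $(\sigma_n/\alpha)\delta_{z_n}$ or a rescaled dipole $\mathcal{D}_\beta(x_n, y_n)$ with $0 \leq |x_n-y_n|^p \leq 2\alpha-\beta$. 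The compactness of $\Omega$ yields $\mathbf{1} \in \mathcal{C}(\Omega)$, so testing weak* convergence against it reveals that dipole members have mass zero while Dirac members have mass $\pm 1/\alpha$; this will let us discard the ``wrong'' type of element in each case.

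For $U^1_i(R)$: the limit $\bar{\mu}^1_i$ has total mass $\operatorname{sign}(\bar{q}(\bar{z}_i))/\alpha$, which by Assumption~$(\mathbf{B2})$ equals $\pm 1/\alpha \neq 0$. Hence $\mu_n$ cannot be a dipole for $n$ large, so $\mu_n = (\sigma_n/\alpha)\delta_{z_n}$ eventually. Testing against $\mathbf{1}$ gives $\sigma_n \to \operatorname{sign}(\bar{q}(\bar{z}_i))$; since $\{-1,+1\}$ is discrete, $\sigma_n$ is eventually constant with this value. Testing against arbitrary continuous functions (or using a suitable bump around $\bar{z}_i$) forces $z_n \to \bar{z}_i$, so $z_n \in B_R(\bar{z}_i)$ eventually and $\mu_n \in U^1_i(R)$. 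For $U^2_j(R)$: the limit $\mathcal{D}_\beta(\bar{x}_j,\bar{y}_j)$ has zero mass, so Diracs are ruled out for $n$ large and $\mu_n = \mathcal{D}_\beta(x_n, y_n)$. By compactness of $\Omega \times \Omega$, extract a subsequence with $(x_n, y_n) \to (x^\ast, y^\ast)$. The main subtlety is the degenerate case $x^\ast = y^\ast$: here $\beta > 0$ keeps the denominator $\beta + |x_n-y_n|^p$ bounded below, while the numerator $\delta_{x_n}-\delta_{y_n}$ tends weak* to zero, so $\mu_n \xrightharpoonup{\ast} 0 \neq \bar{\mu}^2_j$, a contradiction. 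Thus $x^\ast \neq y^\ast$, and continuity of $\mathcal{D}_\beta$ off the diagonal gives $\mathcal{D}_\beta(x_n,y_n) \xrightharpoonup{\ast} \mathcal{D}_\beta(x^\ast,y^\ast)$; the uniqueness of the Hahn decomposition forces $(x^\ast,y^\ast) = (\bar{x}_j,\bar{y}_j)$. A standard subsequence-of-subsequence argument promotes this to convergence of the full sequence, so $(x_n,y_n) \in B_R(\bar{x}_j) \times B_R(\bar{y}_j)$ eventually.

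Finally, for the extremality inclusion with $R$ small: by Theorem~\ref{thm:KRextremals}, $U^1_i(R) \subset \Ext(B)$ as soon as $B_R(\bar{z}_i) \subset \Omega$, which holds for small $R$ since $\bar{z}_i \in \operatorname{int}\Omega$ by Assumption~$(\mathbf{B2})$, and because $\operatorname{sign}(\bar{q}(\bar{z}_i)) = \pm 1$. For $U^2_j(R)$ to consist of extremal points, we need $0 < |x-y|^p < 2\alpha - \beta$ and $(x,y) \in \Omega \times \Omega$. Assumption~$(\mathbf{B2})$ gives strict inequalities at $(\bar{x}_j,\bar{y}_j)$ together with $(\bar{x}_j,\bar{y}_j) \in \operatorname{int}\Omega \times \operatorname{int}\Omega$; by continuity of $(x,y) \mapsto |x-y|^p$, both strict inequalities persist on $B_R(\bar{x}_j) \times B_R(\bar{y}_j)$ for $R$ sufficiently small, giving the claim. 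The one genuinely delicate point is the collapse scenario $x^\ast = y^\ast$ in the dipole case, which is handled precisely because the $\beta > 0$ denominator regularization prevents dipoles from concentrating into a single Dirac in the limit.
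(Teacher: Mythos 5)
Your proposal is correct and follows essentially the same route as the paper: both exploit the metrizability of $(\mathcal{B}, d_{\mathcal{B}})$ to reduce the neighborhood claim to showing that any weak*-convergent sequence in $\mathcal{B}$ with the given limit eventually lies in $U^\ell_j(R)$, and then argue by cases on whether the terms are Diracs or dipoles. Your variations are minor and sound: you exclude the wrong type of element by testing against $\mathbf{1}\in\mathcal{C}(\Omega)$ (total mass) where the paper extracts subsequences and invokes uniqueness of weak* limits, you make the dipole-collapse case $x^\ast=y^\ast$ explicit via the lower bound $\beta>0$ on the denominator, and you spell out the final inclusion $U^\ell_j(R)\subset\Ext(B)$ for small $R$ using $(\mathbf{B2})$ and Theorem~\ref{thm:KRextremals}, a step the paper's proof leaves implicit.
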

\begin{proof}
Let indices~$i\in\{1, \dots, \bar{N}_1\}$ and~$j\in\{1, \dots, \bar{N}_2\}$ be arbitrary but fixed. We first show the claimed statement for~$\bar{U}^2_j$. Noting that~$(\mathcal{B},d_{\mathcal{B}})$ is a metric space, it suffices to show that any sequence~$\{\mu_k\}_k \subset \mathcal{B}$ with~$\mu_k \xrightharpoonup{\ast} \mathcal{D}_\beta(\bar{x}_j,\bar{y}_j)$ eventually lies in~$\bar{U}^2_j$ for all~$k \in \N$ large enough. For this purpose, assume that~$\{\mu_k\}_{k}$ admits a subsequence, denoted by the same symbol, of the form~$\mu_k=(\sigma_k/\alpha) \delta_{z_k}$ for some~$\sigma_k \in\{-1,1\},~z_k \in\Omega$. Then, by possibly selecting another subsequence, we get~$\mu_k \xrightharpoonup{\ast} (\bar{\sigma}/\alpha)\delta_{\bar{z}}$ for some~$\bar{\sigma} \in \{-1,1\},~\bar{z} \in \Omega$. Noting that weak* limits are unique and~$\bar{\sigma}\delta_{\bar{z}} \neq \mathcal{D}_\beta(\bar{x}_j, \bar{y}_j)$ yields a contradiction.
In the same way, we exclude the existence of a subsequence with~$\mu_k=0$ for all~$k$.
Hence, for all~$k\in\N$ large enough, we have~$\mu_k=\mathcal{D}_\beta(x_k,y_k)$ for some~$(x_k,y_k ) \in \Omega \times \Omega$ with~$0< |x_k,y_k|\leq 2\alpha-\beta$. By a similar contradiction argument,~$(x_k,y_k) \rightarrow (\bar{x}_j,\bar{y}_j)$ has to hold. Thus, for every~$k \in \N$ large enough, we have~$(x_k,y_k) \in B_{R_2}(\bar{x}_j,\bar{y}_j)$ and thus~$\mu_k \in \bar{U}^2_j$, finishing the proof.
The openness of~$\bar{U}^1_j$ follows by similar argument. In fact, if~$\{\mu_k\}_k  \subset \mathcal{B} $ satisfies
\[
 \mu_k \xrightharpoonup{\ast}   (\operatorname{sign}(\bar{q}(\bar{z}_i)/\alpha)\delta_{\bar{z}_i},
\]
then~$\mu_k=(\sigma_k/\alpha) \delta_{z_k}$,~$\sigma_k \in\{-1,1\},~z_k\in\Omega$ for all~$k$ large enough since~$\bar{\mu}^1_i \neq \mathcal{D}_\beta(x,y)$ for every~$(x,y)\in \Omega \times \Omega$. Moreover, from~\cite[Lemma~3.16]{linconv}, we get~$\sigma_k=$ for all~$k \in \N$ large enough. Finally, if there is a subsequence of~$\{z_k\}_k$, denoted by the same symbol, with~$z_k \rightarrow \bar{z}$ with~$\bar{z}\neq \bar{z}_i$, then we can choose~$\varphi \in \mathcal{C}(\Omega)$ satisfying~$\varphi(\bar{z})=0$ and~$\varphi(\bar{z}_i)=1$. For the corresponding subsequence of measures~$\mu_k$, we then obtain
\[
    \langle \varphi, \mu_k \rangle= (\sigma_k/\alpha) \varphi(z_k) \rightarrow (\sign(\bar{q}(\bar{z}_i))/\alpha) \varphi(\bar{z})=0 \neq \langle \varphi,\bar{\mu}_i \rangle
\]
yielding a contradiction and thus~$\bar{z}=\bar{z}_i$.
\end{proof}
Next we prove the Lipschitz and quadratic growth properties from~\eqref{def:LipandQuad}.
\begin{lemma} \label{lem:locallipschitz}
There are~$R_1,C_K>0$ with
\[
    \|K(\mu-\bar{\mu}^\ell_j) \|_Y \leq C_K \, g(\mu, \bar{\mu}^\ell_j)
\]
for all~$\mu \in U^\ell_j(R_1)$,~$j=1,\dots,\bar{N}_\ell$,~$\ell=1,2$.
\end{lemma}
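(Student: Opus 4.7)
The plan is to reduce everything to duality: for each $y\in Y$ with $\|y\|_Y\leq 1$ one has $\langle y, K(\mu-\bar\mu^\ell_j)\rangle_Y = \langle K_\ast y, \mu-\bar\mu^\ell_j\rangle$, so
\[ \|K(\mu-\bar\mu^\ell_j)\|_Y = \sup_{\|y\|_Y\leq 1}\langle K_\ast y, \mu-\bar\mu^\ell_j\rangle. \]
Assumption $(\mathbf{B3})$ provides $L>0$ with $\operatorname{Lip}(K_\ast y)\leq L$ whenever $\|y\|_Y\leq 1$. Thus it suffices to show, uniformly over all $\varphi\in\operatorname{Lip}(\Omega)$ with $\operatorname{Lip}(\varphi)\leq L$, the estimate $|\langle \varphi, \mu-\bar\mu^\ell_j\rangle|\leq C_K\, g(\mu,\bar\mu^\ell_j)$.

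For $\ell=1$, Lemma~\ref{lem:openneighbor} ensures that shrinking $R_1$ allows us to write every $\mu\in U^1_i(R_1)$ as $\mu=(\sigma/\alpha)\delta_z$ with $\sigma=\sign(\bar q(\bar z_i))$ and $z\in B_{R_1}(\bar z_i)$. Then $\langle \varphi,\mu-\bar\mu^1_i\rangle=(\sigma/\alpha)(\varphi(z)-\varphi(\bar z_i))$, and the Lipschitz bound on $\varphi$ yields the inequality with constant $L/\alpha$, since $|z-\bar z_i|\leq g(\mu,\bar\mu^1_i)$.

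For $\ell=2$, we further shrink $R_1$ so that $|x-y|\geq c_0>0$ for every $(x,y)\in B_{R_1}(\bar x_j)\times B_{R_1}(\bar y_j)$ and every $j$; this is possible since $|\bar x_j-\bar y_j|>0$ by $(\mathbf{B2})$. On the interval $[c_0,\operatorname{diam}\Omega]$ the map $t\mapsto t^p$ is Lipschitz (by the mean value theorem), so
\[ \bigl||x-y|^p - |\bar x_j-\bar y_j|^p\bigr|\leq C'\bigl(|x-\bar x_j|+|y-\bar y_j|\bigr) \leq C''\, g(\mu,\bar\mu^2_j). \]
With $B:=\beta+|x-y|^p$ and $B_0:=\beta+|\bar x_j-\bar y_j|^p$, both bounded below by $\beta$, one splits
\[ \langle\varphi,\mu-\bar\mu^2_j\rangle = \frac{\bigl(\varphi(x)-\varphi(\bar x_j)\bigr)-\bigl(\varphi(y)-\varphi(\bar y_j)\bigr)}{B} + \bigl(\varphi(\bar x_j)-\varphi(\bar y_j)\bigr)\cdot\frac{B_0-B}{BB_0}, \]
and bounds the two terms by $L(|x-\bar x_j|+|y-\bar y_j|)/\beta$ and $L\operatorname{diam}(\Omega)\cdot C''\,g(\mu,\bar\mu^2_j)/\beta^2$, respectively, using $|\varphi(\bar x_j)-\varphi(\bar y_j)|\leq L\operatorname{diam}(\Omega)$. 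Combining gives the desired Lipschitz estimate with a constant $C_K$ depending only on $L,\alpha,\beta,\operatorname{diam}(\Omega)$, $p$ and $c_0$.

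The main obstacle is the dipole case: when $p<1$ the function $t\mapsto t^p$ is merely Hölder on $[0,\infty)$, so a global calculation would deliver only a Hölder bound on the denominator difference. The fix is the preliminary shrinking of $R_1$ that keeps $|x-y|$ away from zero; this transfers us to a compact subinterval on which $t^p$ is smooth with bounded derivative, after which the algebraic decomposition above turns the required estimate into a direct consequence of the Lipschitz bound on $\varphi$.
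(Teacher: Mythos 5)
Your proof is correct and follows essentially the same route as the paper's: the duality identity $\|K(\mu-\bar\mu^\ell_j)\|_Y=\sup_{\|v\|_Y\leq 1}\langle K_*v,\mu-\bar\mu^\ell_j\rangle$ combined with the uniform Lipschitz bound on $K_*v$ from $(\mathbf{B3})$, and, for the dipoles, a split into a numerator-difference term and a denominator-difference term, where the key point in both arguments is that $|\bar x_j-\bar y_j|>0$ keeps $|x-y|$ bounded away from zero so that $t\mapsto t^p$ is Lipschitz there. The only (immaterial) difference is that your algebraic decomposition anchors the shared denominator at $B=\beta+|x-y|^p$ while the paper anchors it at $B_0=\beta+|\bar x_j-\bar y_j|^p$.
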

\begin{proof}
By assumption,~$K_* \colon Y \to \operatorname{Lip}(\Omega)$ is continuous. As a consequence, we immediately get
\begin{align*}
    \|K(\delta_z-\delta_{\bar{z}_i}) \|_Y &= \sup_{\|v\|_Y \leq 1} \langle K_* v, \delta_z-\delta_{\bar{z}_i}\rangle= \sup_{\|v\|_Y \leq 1} \left \lbrack \lbrack K_* v \rbrack(z)-\lbrack K_* v \rbrack(\bar{z}_i)\right \rbrack
    \\ & \leq \|K_*\|_{Y, \operatorname{Lip}} |z-\bar{z}_i|
\end{align*}
for all~$z\in \Omega$.
For~$\mathcal{D}_\beta(\bar{x}_j, \bar{y}_j)$ we can argue similarly. For this purpose, if~$R_1>0$ is small enough, we have
\[
    |\bar{x}_i-\bar{y}_i|^p-|x-y|^p \leq c \left|\begin{pmatrix} x-\bar{x}_j \\ y-\bar{y}_j\end{pmatrix} \right|
\]
for all~$(\bar{x}_i,\bar{y}_i) \in B_R(\bar{x}_j) \times B_R(\bar{y}_j)$ since~$|\bar{x}_i-\bar{y}_i|>0$.
As a consequence, we get
\begin{align*}
    \|K(\mathcal{D}_\beta(x,y)-\mathcal{D}_\beta(\bar{x}_j,\bar{y}_j)) \|_Y&= \sup_{v \in Y} \langle K_* v, \mathcal{D}_\beta(x,y)-\mathcal{D}_\beta(\bar{x}_j,\bar{y}_j)\rangle \\ & = \sup_{y \in Y} \left \lbrack \frac{\lbrack K_*v \rbrack(x)-\lbrack K_*v \rbrack(y)}{\beta+ |x-y|^p}-\frac{\lbrack K_*v \rbrack(\bar{x}_j)-\lbrack K_*v \rbrack(\bar{y}_j)}{\beta+ |\bar{x}_j-\bar{y}_j|^p} \right \rbrack
    \\& \leq D_1+ D_2 
\end{align*}
where we abbreviate
\begin{align*}
    D_1 &\coloneqq \frac{\|K_*\|_{Y,\operatorname{Lip}}(|x-\bar{x}_j|+|y-\bar{y}_j|)}{\beta+ |\bar{x}_j-\bar{y}_j|^p} \leq c \left|\begin{pmatrix} x-\bar{x}_j \\ y-\bar{y}_j\end{pmatrix} \right|
\end{align*}
as well as
\begin{align*}
    D_2 &\coloneqq\left(\frac{1}{(\beta+ |x-y|^p)}-\frac{1}{(\beta+ |\bar{x}_j-\bar{y}_j|^p)} \right) \left( \lbrack K_*v \rbrack(x)-\lbrack K_*v \rbrack(y)\right)
    \\ & \leq 2 \|K_*\|_{Y, \mathcal{C}} \left( \frac{|\bar{x}_j-\bar{y}_j|^p-|x-y|^p}{(\beta+ |x-y|^p)(\beta+ |\bar{x}_j-\bar{y}_j|^p)} \right)
    \\ & \leq \frac{2c \|K_*\|_{Y, \mathcal{C}}}{\beta^2} \left|\begin{pmatrix} x-\bar{x}_j \\ y-\bar{y}_j\end{pmatrix} \right|.
\end{align*}
The claimed statement then follows by definition of~$U^1_i(R_1)$ and~$U^2_j(R_1)$ from Lemma~\ref{lem:openneighbor} and noting that
\[
    g(\mu, \bar{\mu}^1_i)=|z-\bar{z}_i| \quad \text{for all}~\mu= \sign(\bar{q}(\bar{z}_i))\delta_z \in U^1_i(R_1)
\]
as well as
\[
    g(\mu, \bar{\mu}^2_j)=\left|\begin{pmatrix} x-\bar{x}_j \\ y-\bar{y}_j\end{pmatrix} \right| \quad \text{for all}~\mu= \mathcal{D}_\beta(x,y) \in U^2_i(R_1).
\]
Since all involved constants are independent of~$i$ and~$j$, respectively, we conclude.
\end{proof}
\begin{prop} \label{prop:quadraticgrwoth}
Let Assumption~$(\mathbf{B3})$ hold. Then there are~$\theta>0 $ and a radius~$0<R_2$ with
\[
    1-\langle \bar{q}, \mu  \rangle \geq \theta\, g(\mu,\bar{\mu}^\ell_j)^2  \quad \text{for all}~\mu \in U^\ell_j(R_2),
\]
and~$j=1,\dots,\bar{N}_\ell$,~$\ell=1,2$.
\end{prop}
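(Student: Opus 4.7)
The plan is to derive the quadratic lower bound as a second-order Taylor expansion at each of the finitely many maximizers, exploiting (i) interior localization of the points $\bar{z}_i$ and $(\bar{x}_j,\bar{y}_j)$ from Assumption~$(\mathbf{B2})$, (ii) the $\mathcal{C}^2$-regularity of $\bar{q}$ from Assumption~$(\mathbf{B3})$, and (iii) the non-degeneracy of the Hessians postulated in Assumption~$(\mathbf{B3})$. Since there are only finitely many such points, the global $\theta$ and $R_2$ are obtained by taking minima/infima of the local constants.

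First I would treat the Dirac case. Fix $i\in\{1,\dots,\bar{N}_1\}$, set $\sigma_i:=\operatorname{sign}(\bar{q}(\bar{z}_i))$, and note that by Theorem~\ref{thm:firstorder} the function $z\mapsto \sigma_i\bar{q}(z)/\alpha$ attains its global maximum $1$ at $\bar{z}_i$. Since $\bar{z}_i\in\operatorname{int}\Omega$ by $(\mathbf{B2})$, the first-order condition gives $\nabla\bar{q}(\bar{z}_i)=0$, and the second-order condition together with $(\mathbf{B3})$ ensures that $\sigma_i\nabla^2\bar{q}(\bar{z}_i)$ is negative semidefinite and invertible, hence strictly negative definite. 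Letting $\theta_i>0$ be half of the smallest eigenvalue of $-\sigma_i\nabla^2\bar{q}(\bar{z}_i)/\alpha$ and using Taylor expansion,
\begin{equation}
1-\langle \bar{q},\sigma_i\delta_z/\alpha\rangle = -\tfrac{1}{2\alpha}\sigma_i(z-\bar{z}_i)^\top \nabla^2\bar{q}(\bar{z}_i)(z-\bar{z}_i)+o(|z-\bar{z}_i|^2)\geq \theta_i |z-\bar{z}_i|^2
\end{equation}
for $|z-\bar{z}_i|\leq r_i$ with some $r_i>0$. Recalling $g(\mu,\bar{\mu}^1_i)=|z-\bar{z}_i|$ on $U^1_i(R_1)$, this yields the required bound on $U^1_i(\min(r_i,R_1))$.

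Next I would treat the dipole case analogously on $\Omega\times\Omega$. Since $(\bar{x}_j,\bar{y}_j)\in\operatorname{int}\Omega\times\operatorname{int}\Omega$ and $\Psi_{\bar{q}}\in\mathcal{C}^2$ in a neighborhood (because $|\bar{x}_j-\bar{y}_j|>0$ and $\bar{q}\in\mathcal{C}^2$), and because $\Psi_{\bar{q}}$ has a global maximum equal to $1$ at $(\bar{x}_j,\bar{y}_j)$, we get $\nabla\Psi_{\bar{q}}(\bar{x}_j,\bar{y}_j)=0$ and $\nabla^2\Psi_{\bar{q}}(\bar{x}_j,\bar{y}_j)$ negative semidefinite. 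Combined with $\det \nabla^2\Psi_{\bar{q}}(\bar{x}_j,\bar{y}_j)\neq 0$ from $(\mathbf{B3})$, the Hessian is strictly negative definite. Setting $\vartheta_j>0$ to be half its smallest absolute eigenvalue, Taylor's theorem gives
\begin{equation}
1-\langle \bar{q},\mathcal{D}_\beta(x,y)\rangle = 1-\Psi_{\bar{q}}(x,y) \geq \vartheta_j\left|\begin{pmatrix}x-\bar{x}_j\\ y-\bar{y}_j\end{pmatrix}\right|^2
\end{equation}
for $(x,y)$ in a small enough ball around $(\bar{x}_j,\bar{y}_j)$, which together with $g(\mu,\bar{\mu}^2_j)=|(x-\bar{x}_j,y-\bar{y}_j)|$ on $U^2_j(R_1)$ yields the claim.

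Finally, I would set $\theta:=\min_{i,j}\{\theta_i,\vartheta_j\}>0$ and $R_2$ as the minimum of $R_1$ with all the local radii; the main (and only) delicate point is to verify the strict negative definiteness, which uses that non-degeneracy $(\mathbf{B3})$ upgrades the second-order necessary condition for an interior maximum from semidefinite to definite. All other steps are standard Taylor estimates on a compact set of $\bar{N}_1+\bar{N}_2$ points.
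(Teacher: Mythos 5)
Your proposal is correct and follows essentially the same route as the paper's proof: interior maximizers give vanishing gradients, the non-degeneracy in $(\mathbf{B3})$ upgrades the second-order necessary condition to strict definiteness, Taylor expansion yields the local quadratic bound, and finiteness of the maximizer set lets you take minima of the local constants. The only (welcome) extra care in your write-up is the explicit remark that $\Psi_{\bar q}$ is $\mathcal{C}^2$ near $(\bar x_j,\bar y_j)$ because $|\bar x_j-\bar y_j|>0$, a point the paper leaves implicit.
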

\begin{proof}
Since~$\bar{z}_i \in \operatorname{int}\Omega$ is a global extremum of~$\bar{q}$ and~$(\bar{x}_j,\bar{y}_j) \in \operatorname{int}\Omega \times \operatorname{int}\Omega$ is a global maximum of~$\Psi_{\bar{q}}$, we have~$\nabla \bar{q}(\bar{z}_i)=0$ and $\nabla \Psi_{\bar{q}}(\bar{x}_j, \bar{y}_j)=0$, respectively. Using the non-degeneracy of the associated Hessians, see Assumption~$(\mathbf{B3})$, and the continuity of~$\bar{q}$, we conclude the existence of~$R_2>0$ as well as of~$\theta>0$ with
\[
\sign(\bar{q}(z))= \sign(\bar{q}(\bar{z}_i)),~1-|\bar{q}(z)|/\alpha \geq \theta \, |z-\bar{z}_i|^2 \quad \text{for all}~z \in B_{R_2}(\bar{z}_i),
\]
as well as
\[
    1-\Psi_{\bar{q}}(x,y) \geq \theta \, \left|\begin{pmatrix} x-\bar{x}_j \\ y-\bar{y}_j\end{pmatrix} \right|^2  \quad \text{for all}~(x,y) \in B_{R_2}(\bar{x}_j,\bar{y}_j),
\]
by Taylor's expansion. This implies
\[
    1-\langle \bar{q}, \mu_1 \rangle=1-\sign(\bar{q}(z)) \bar{q}(z)/\alpha=1-|\bar{q}(z)|/\alpha \geq \theta \, |z-\bar{z}_i|^2= \theta \,g(\mu_1, \bar{\mu}_i)^2,
\]
as well as
\[
    1-\langle \bar{q}, \mu_2 \rangle=1-\Psi_{\bar{q}}(x,y) \geq 1-|\bar{q}(z)|/\alpha \geq \theta \, \left|\begin{pmatrix} x-\bar{x}_j \\ y-\bar{y}_j\end{pmatrix} \right|^2
\]
for all
\begin{equation} \label{eq:quadhelp}
    \mu_1= (\operatorname{sign}(\bar{q}(\bar{z}_i))/\alpha) \delta_z \in U^1_i(R_2)~\quad \text{and} \quad \mu_2= \mathcal{D}_\beta(x,y) \in U^2_j(R_2).
\end{equation}
By Lemma~\ref{lem:openneighbor}, all elements of~$U^1_i(R_2)$ and~$U^2_i(R_2)$, respectively, are of the form~\eqref{eq:quadhelp}, thus finishing the proof. 
\end{proof}
Summarizing the previous observations, we conclude Theorem~\ref{thm:linear} using the results of from~\cite{linconv}:
\begin{proof}[Proof of Theorem~\ref{thm:linear}]
Summarizing our previous observations, we have that:
\begin{itemize}
    \item The function~$F$ is strongly convex around the optimal observation~$\bar{y}$, see Assumption~$(\mathbf{B2})$.
    \item According to Lemma~\ref{lem:onlyfinite}, there exists~$\{\bar{\mu}_j\}^{\bar{N}}_{j=1} \subset \operatorname{Ext}(B)$ with
    $\max_{\mu \in \mathcal{B}}\langle \bar{q}, \mu \rangle=\{\bar{\mu}_j\}^{\bar{N}}_{j=1}$.
    \item The set~$\{\bar{\mu}_j\}^{\bar{N}}_{j=1}$ is linearly independent, see Assumption~$(\mathbf{B4})$.
    \item The unique solution~$\bar{u}=\sum^{\bar{N}}_{j=1} \bar{\gamma}_j \bar{\mu}_j$ satisfies~$\bar{\gamma}_j>0$, see Assumption~$(\mathbf{B5})$.
    \item There are~$d_{\mathcal{B}}$-neighborhoods~$U_j$ of~$\bar{\mu}_j$ for~$j=1,\dots,\bar{N}$, a function~$g \colon \operatorname{Ext}(B) \times \operatorname{Ext}(B) \to \mathbb{R}$ and $C_K,\theta >0$ with
    \[
        \|K(\mu-\bar{\mu}_j)\|_Y \leq  C_K \, g(\mu, \bar{\mu}_j),~ 1-\langle \bar{q}, \mu \rangle \geq \theta\, g(\mu,\bar{\mu}_j)^2 \quad \text{for all}~\mu \in U_j \cap \operatorname{Ext}(B).
    \]
\end{itemize}
Consequently, the assumptions of \cite[Theorem~3.8]{linconv} are satisfied, and applying it we conclude the linear convergence of Theorem~\ref{thm:linear}.
\end{proof}

% \begin{rem}
% Assuming that we are treating a convolution, that is,
% \[F(K\mu) = \frac12 \|G_\delta \ast \mu - f\|^2_{L^2(\Omega)}, \ \text{ so that }\ \bar{q} = G_\delta \ast \big(G_\delta \ast \mu - f\big),\]
% where $G_\delta(x) = G(x/\delta)/\delta^d$ and $G$ is nonnegative, even, smooth, supported on $B(0,1)$ and with $\int_{B(0,1)} G = 1$, we can make the bound \eqref{eq:distlowerboundH} more concrete. In this situation we have
% \[\sup_{z \in \Omega} |\nabla\bar q(z)| \leq \|\nabla G_\delta\|_{L^\infty} \|G_\delta \ast \mu - f\|_{L^1} = \delta^{-d-1}\|\nabla G\|_{L^\infty} \|G_\delta \ast \mu - f\|_{L^1}\]
% which used in \eqref{eq:distlowerboundH} and taking into account that $H^{-1}$ is decreasing results in 
% \[|x-y| \geq H^{-1}\left(\delta^{-d-1}\|\nabla G\|_{L^\infty} \frac{\sup_{z \in \Omega} \|G_\delta \ast \mu - f\|_{L^1}}{\sup_{z \in \Omega} |\bar q(z)|}\right).\]
% To simplify this further we would need an inequality of the form
% \begin{equation}\label{eq:inverseineq}\sup_{z \in \Omega} |\bar q(z)| = \sup_{z \in \Omega} \left| \int G_{\delta}(z-w)\big(G_\delta \ast \mu - f\big)(w) \dd w \right| \geq C \|G_\delta \ast \mu - f\|_{L^1},\end{equation}
% which cannot be true in general, as can be seen using $\Omega=(0,1)$, $\mu=0$, $f_k(x)=\sin(2 \pi k x)$ and using the Riemann-Lebesgue lemma. On the other hand, among $G_\delta \ast \mu - f$ of constant sign \eqref{eq:inverseineq} holds with $C=1/|\Omega|$.
% \end{rem}
\small
\bibliographystyle{plain}
\bibliography{kr}

\end{document}